\theoremstyle{definition}
\newtheorem{df}[subsection]{Definition}
\newtheorem{thm}[subsection]{Theorem}
\newtheorem{lem}[subsection]{Lemma}
\newtheorem{rem}[subsection]{Remark}
\newtheorem{con}[subsection]{Conjecture}
\newtheorem{cor}[subsection]{Corollary}
\newtheorem{exm}[subsection]{Example}
\newtheorem{prop}[subsection]{Proposition}
\newtheorem{prob}[subsection]{Problem}
\newtheorem{notation}[subsection]{Notation}
\newcommand\N{\mathbb N}
\newcommand\R{\mathbb R}
\newcommand\C{\mathbb C}
\newcommand\RX{\R[x]}
\DeclareMathOperator\supp{supp}
\DeclareMathOperator\MAP{MAP}
\newcommand\1{\mathbf 1}
\newcommand\I{\mathcal I}
\newcommand\B{\mathcal B}
\DeclareMathOperator\cl{cl}
\newcommand\symdiff{\mathbin\Delta}
\newcommand\la\lambda
\newcommand\al\alpha
\newcommand\be\beta
\newcommand\si\sigma
\newcommand\ep\varepsilon
\title{Amalgamation of real zero polynomials}
\author{David Sawall}
\email{david.sawall@uni-konstanz.de}
\author{Markus Schweighofer}
\email{markus.schweighofer@uni-konstanz.de}
\address{Fachbereich Mathematik und Statistik, Universität Konstanz, 78457 Konstanz, Germany}
\date{July 28, 2023}
\begin{document}

\begin{abstract}
With this article, we hope to launch the investigation of what we call the \emph{Real Zero Amalgamation Problem}. Whenever a polynomial arises from another polynomial by substituting zero for some of its variables, we call the second polynomial an \emph{extension} of the first one. The \emph{Real Zero Amalgamation Problem} asks when two (multivariate real) polynomials have a common extension (called \emph{amalgam}) that is a real zero polynomial. We show that the obvious necessary conditions are not sufficient. Our counterexample is derived in several steps from a counterexample to amalgamation of matroids by Poljak and Turzík.
On the positive side, we show that even a degree-preserving amalgamation is possible in three very special cases with three completely different techniques.
Finally, we conjecture that amalgamation is always possible in the case of two shared variables. The analogue in matroid theory is true by another work of Poljak and Turzík. This would imply a very weak form of the Generalized Lax Conjecture.
\end{abstract}

\maketitle

%-----------------Introduction-----------------

\section{Introduction}
Throughout the article, we denote by $\N$, $\N_0$, $\R$, $\R_{\ge0}$ and $\R_{>0}$ the set of positive and nonnegative integers, the real, real nonnegative and real
positive numbers, respectively.
We consider three tuples $x=(x_1,\ldots,x_\ell)$, $y=(y_1,\ldots,y_m)$ and $z=(z_1,\ldots,z_n)$ of $\ell+m+n$ distinct variables for some $\ell,m,n\in\N_0$.
If $\ell=1$, then we sometimes write $x$ instead of $x_1$, similarly for $y$ and $z$. Furthermore, we will often consider an additional variable $s$ or
$t$ when we consider univariate polynomials or when we homogenize multivariate polynomials.

Let $p\in\R[x,y]$ and $q\in\R[x,z]$ be polynomials. We call a polynomial $r\in\R[x,y,z]$ an \emph{amalgam} of $p$ and $q$ if $p=r(x,y,0)$ and $q=r(x,0,z)$.  It is trivial
that an amalgam of $p$ and $q$ exists if and only if the compatibility condition $p(x,0)=q(x,0)$ holds. We will however ask for amalgams having certain properties.
All the properties we will consider will be preserved by setting some of the variables to zero. Hence we will have to require in addition that $p$ and $q$ have this property.

Our main interest lies in the property of being a \emph{real zero polynomial} \cite{hv,vin} but on the way we will also deal with the property
of being \emph{stable} \cite{pem,wag}.
A real polynomial in $n$ variables is called \emph{real zero} if it does not vanish at the origin and
its restriction along any line in $\R^n$ through the origin (seen as a real univariate polynomial) is real-rooted (i.e. has no non-real complex roots).

In Version 2 (posted on March 1, 2020) of the preprint \cite{sc2}, the second author showed by three completely different techniques in the following three special cases
that real zero polynomials $p\in\R[x,y]$ and $q\in\R[x,z]$ with $p(x,0)=q(x,0)$
can be amalgamated by a real zero polynomial $r$ with the additional property that the (total) degree of $r$ does not exceed the maximum degree of $p$ and $q$:
\begin{enumerate}[(a)]
\item $\ell=0$, i.e., if there are no shared variables,
\item $\ell=m=n=1$, i.e., if each block of variables consists just of a single variable,
\item $p$ and $q$ are quadratic, i.e., of degree at most $2$.
\end{enumerate}
We decided to move this material over to this article.
For (a), we will use the theory of stability preservers of Borcea and Brändén \cite{bb1}, for (b) the solution to the Lax Conjecture by Helton and Vinnikov \cite{hv} and
for (c) positive semidefinite matrix completion of Grone, Johnson, de Sá and Wolkowicz \cite{gjsw}.

In the same preprint of the second author, it was conjectured that two (compatible) real zero polynomials can always be amalgamated (by a real zero polynomial).
The first author recently disproved this after discussions with the second author. This is the main result of this paper. Concretely,
let $e_3\in\R[x_1,\ldots,x_6,y]$ be the third elementary symmetric polynomial in the seven variables $x_1,\ldots,x_6,y$ (here $\ell:=6$ and $m:=n:=1$). Consider the polynomials
\[p:=e_3-yx_1x_4-yx_3x_6-yx_2x_5-x_1x_2x_3-x_4x_5x_6\in\R[x,y]\] and \[q:=e_3(x,z)-zx_1x_4-zx_2x_5-x_1x_2x_3-x_4x_5x_6\in\R[x,z].\]
We show that the polynomials $p(x+\1,y)$ and $q(x+\1,z)$ are real zero and do not have a real zero amalgam although
$p(x+\1,0)=q(x+\1,0)$ where $\1$ always denotes the all-ones vector of appropriate length. 

This counterexample will rely on matroid theory and the theory of stable polynomials. Namely, we will first consider the problem of amalgamating two matroids
\cite[Section 11.4]{oxl}. It is well known in matroid theory that this is not always possible. We will take a counterexample from Poljak and Turzík from \cite{pt2} and consider the bases
generating polynomials $p$ and $q$ of the matroids.

We will show that the considered polynomials are stable using a criterion from Wagner and Wei \cite{ww}. If there were a multi-affine homogeneous stable amalgam of $p$ and $q$ then by \cite[Theorem 7.1]{cosw} its support would be an amalgamating matroid of the matroids considered in \cite{pt2}. This will give us that there is no homogeneous stable amalgam of $p$ and $q$. We will then show that the matroids Poljak and Turzík considered do not even have an amalgam as delta-matroids in the sense of Bouchet \cite{bou1}. From this, we get that there is no multi-affine stable amalgam of our bases generating polynomials because the support of a multi-affine stable polynomial forms a delta-matroid \cite{bra1}. We even get that our polynomials do not have a stable amalgam by using that the multi-affine part of a stable polynomial is again stable. Lastly, we use the theory of real zero polynomials and the fact that $p$ and $q$ do not have a stable amalgam to show that $p(x+\1,y)$ and $q(x+\1,z)$ are real zero and do not have a real zero amalgamating polynomial.

Knowing now that real zero amalgamation does not work under the obvious minimal requirements (contrary to what the second author conjectured in Version 2 of \cite{sc2}), the question is
what could potentially be an additional hypothesis that always allows for real zero amalgamation. Although, we are not able to give an answer for now, we allow ourselves to speculate
and search again for inspiration in matroid theory. Poljak and Turzík introduce in their earlier work \cite{pt1} the notion of a \emph{sticky} matroid which is by definition a matroid
over which amalgamation is always possible. They showed that \emph{modular matroids} \cite[Section 6.9]{oxl} are sticky and conjectured the converse.  Very recently, Shin
was able to prove this conjecture which was known under the name of \emph{sticky matroid conjecture} \cite{shi}. This raises the question what would be the ``sticky real zero
polynomials''. It is easy to check that all matroids over a two-element ground set are modular. Hence for sure, a counterexample which is of the same nature of the one
we present here will not exist in the case of $\ell=2$ shared variables. We will conjecture therefore that real zero amalgamation over two shared variables is always possible.
By (Version 2 of) \cite{sc2} this would imply the following very weak form of the Generalized Lax Conjecture: Given a rigidly convex set (the closure of the
connected component of the origin in the real non-vanishing set of a real zero polynomial which is known to be always convex) and a union of finitely many planes through the origin
(a two-dimensional subspace), there is a spectrahedron containing the rigidly convex set and agreeing with it on the finite union. One can think of this as wrapping a rigidly
convex set into a spectrahedron and ``tying it with cords''. The Generalized Lax Conjecture is much stronger and even says that each rigidly convex set is a spectrahedron. 

%-----------------Real zero, hyperbolic and stable polynomials-----------------

\section{Real zero and stable polynomials}\label{stableRZHyp}

Our main interest lies in real zero polynomials. They have been introduced in \cite[Subsection 2.1]{hv} but are just a non-homogeneous version of the more popular
(homogeneous) hyperbolic polynomials going back to Gårding \cite{gar1} which originally arose in the study of partial differential equations and later got popular in
convex optimization \cite{ren}.
For the proofs, we will need in addition the more recently investigated (real) stable polynomials \cite{bb1,bb2,bb3,wag} having applications in dynamical systems,
probability theory \cite{pem} and computer science \cite{oss}.

\begin{df}\label{dfstable}
We call a univariate polynomial $f\in\R[t]$ \emph{real-rooted} if all its complex roots are real, i.e., it is non-zero and splits into linear factors in $\R[t]$
(in particular, the zero polynomial is not real-rooted). Now let
$p\in\R[x]$ be a multivariate polynomial. We call $p$
\begin{enumerate}[(a)]
\item \emph{real zero} if $p(ta)\in\R[t]$ is real-rooted for all $a\in\R^\ell$,
\item \emph{stable} if $p(ta+b)\in\R[t]$ is real-rooted for all $a\in\R_{>0}^\ell$ and $b\in\R^\ell$
\end{enumerate}
\end{df}

Since we speak only about \emph{real} stable polynomials, we chose a definition in (b) above that avoids talking about complex numbers. This definition is however equivalent to
the one given in \cite{bb1} (making use of the open upper half plane in the complex numbers) by \cite[Lemma 2.5]{bb1}. 
By considering $a=0$ in (a), it follows easily that real zero polynomials do not vanish at the origin. Also note that the zero polynomial is neither real zero nor stable according
to our definition (whereas the zero polynomial is declared to be stable by some other authors \cite{wag}).

\begin{rem}\label{stableobvious}
The following are obvious:
\begin{enumerate}[(a)]
\item A univariate polynomial is real-rooted if and only if it is stable.
\item Any product of real zero polynomials is again real zero.
\item Any product of stable polynomials is again stable.
\item If $p\in\RX$ is stable then so is $p(x+a)$ for all $a\in\R^\ell$.
\end{enumerate}
\end{rem}

\begin{exm}\label{quadraticrealzero}
\begin{enumerate}[(a)]
\item Let $p\in\R[x]$ be a quadratic real zero polynomial with $p(0)=1$. Then $p$ can be uniquely written as
\[p=x^TAx+b^Tx+1\] with a symmetric matrix $A\in\R^{\ell\times\ell}$ and a vector $b\in\R^\ell$. For $a\in\R^\ell$ the univariate
quadratic polynomial $p(ta)=a^TAat^2+b^Tat+1$ splits in $\R[t]$ if and only if its discriminant $(b^Ta)^2-4a^TAa=a^T(bb^T-4A)a$ is
is nonnegative. Hence $p$ is a real zero polynomial if and only if the symmetric matrix $bb^T-4A$ is positive semidefinite.
\item The quadratic polynomial $1-x_1^2-x_2^2$ is real zero but not stable.
\item The quadratic polynomial $x_1x_2-1$ is stable but not real zero.
\end{enumerate}
\end{exm}

The following is folklore and easy to prove but we record it here because of its importance:

\begin{prop}\label{detexample}
Let $A_1,\ldots,A_\ell\in\R^{d\times d}$ be symmetric matrices. Then the polynomial \[\det(I_d+x_1A_1+\ldots+x_\ell A_\ell)\] is real zero.
\end{prop}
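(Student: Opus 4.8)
The plan is to reduce the multivariate statement to the univariate fact that a real symmetric matrix has only real eigenvalues. Fix $a=(a_1,\ldots,a_\ell)\in\R^\ell$; we must show that $f(t):=\det(I_d+t(a_1A_1+\ldots+a_\ell A_\ell))\in\R[t]$ is real-rooted. Write $M:=a_1A_1+\ldots+a_\ell A_\ell$, which is again a real symmetric $d\times d$ matrix. If $M=0$ then $f=1$, which is a nonzero constant and hence real-rooted in the sense of Definition \ref{dfstable} — here one should note that the degenerate case is allowed precisely because constants split (trivially) into linear factors. If $M\neq 0$, then $f(t)=\det(I_d+tM)$ is a polynomial of degree $r:=\operatorname{rank} M\ge 1$, and its roots are exactly the numbers $t$ for which $-\frac1t$ is an eigenvalue of $M$ (equivalently $\det(M-(-\frac1t)I_d)=0$), together with the multiplicities matching. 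Since $M$ is real symmetric, all its eigenvalues are real, and the nonzero ones are exactly the reciprocals (up to sign) of the roots of $f$; hence every root of $f$ is real. Therefore $f$ is real-rooted, and since $a$ was arbitrary, $\det(I_d+x_1A_1+\ldots+x_\ell A_\ell)$ is real zero by Definition \ref{dfstable}(a).

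More cleanly, one can avoid the rank bookkeeping: by the spectral theorem write $M=U^T\operatorname{diag}(\mu_1,\ldots,\mu_d)U$ with $U$ orthogonal and $\mu_i\in\R$, so that
\[
f(t)=\det(I_d+tM)=\det\bigl(U^T(I_d+t\operatorname{diag}(\mu_1,\ldots,\mu_d))U\bigr)=\prod_{i=1}^d(1+\mu_i t).
\]
Each factor $1+\mu_i t$ is either the nonzero constant $1$ (if $\mu_i=0$) or the linear polynomial $\mu_i(t+\tfrac{1}{\mu_i})$ with a real root; in all cases $f$ splits into linear factors over $\R$ after discarding the units, so $f$ is real-rooted. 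Finally $f(0)=\det(I_d)=1\neq 0$, consistent with the fact that real zero polynomials do not vanish at the origin.

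I do not expect any genuine obstacle here — this is the ``folklore'' observation that hyperbolicity of $\det$ along the cone of positive definite matrices descends to any linear pencil. The only point requiring a little care is the edge case where the linear combination $a_1A_1+\ldots+a_\ell A_\ell$ is singular or zero, so that the univariate polynomial drops degree; one must check that the definition of real-rooted used in the paper (nonzero, splits into linear factors, with the zero polynomial excluded) is still satisfied, which it is because $f(0)=1$ guarantees $f\neq 0$ and the spectral factorization exhibits the splitting. An alternative route, if one prefers to keep full degree, is a density/perturbation argument: the real-rootedness of $f$ for those $a$ with $a_1A_1+\ldots+a_\ell A_\ell$ invertible is clear, and real-rootedness is a closed condition under coefficientwise limits (Hurwitz), but this is heavier machinery than the elementary spectral argument and is not needed.
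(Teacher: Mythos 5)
Your proposal is correct and is essentially the paper's own argument: the paper's one-line proof appeals to the realness of eigenvalues of real symmetric matrices together with the reciprocal-polynomial relation, and your spectral-theorem factorization $f(t)=\prod_{i=1}^d(1+\mu_i t)$ is exactly the detailed version of that, with the degenerate (degree-dropping) cases handled properly.
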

\begin{proof}
This follows easily from the fact that all complex eigenvalues of real symmetric matrices are real and that a univariate polynomial is real-rooted if and only if its reciprocal is real-rooted.
\end{proof}

Helton and Vinnikov proved by deep methods that surprisingly for $\ell=2$ variables the following converse to Proposition \ref{detexample}
holds \cite{hv} (see also \cite[Subsection 5]{han} for a more algebraic but yet difficult proof).
For $\ell=1$, the corresponding statement is trivial and $A_1$ can even be chosen diagonal.
For $\ell>2$, the converse of Proposition \ref{detexample} in this sense is in general very far from being true \cite{bra2}.

\begin{thm}[Helton and Vinnikov]\label{vinnikov}
If $p\in\R[x_1,x_2]$ is a real zero polynomial of degree $d$ with $p(0)=1$, then there exist symmetric $A_1,A_2\in\R^{d\times d}$ such that
\[p=\det(I_d+x_1A_1+x_2A_2).\]
\end{thm}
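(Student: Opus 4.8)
I do not expect to avoid the deep geometry that Helton and Vinnikov use; the plan below essentially reconstructs the architecture of their proof. The first step is a reduction to the classical theory of determinantal representations of plane curves. I would \emph{homogenize}: set $P(x_0,x_1,x_2):=x_0^d\,p(x_1/x_0,x_2/x_0)\in\R[x_0,x_1,x_2]$, a form of degree exactly $d$ (since $P(1,0,0)=p(0)=1$, the variable $x_0$ does not divide $P$). Unwinding Definition \ref{dfstable}(a), the real zero property of $p$ translates into \emph{hyperbolicity} of $P$ with respect to $e:=(1,0,0)$: every real line in $\mathbb P^2$ through $e$ meets the complex projective curve $C:=\{P=0\}\subseteq\mathbb P^2(\C)$ in $d$ real points (with multiplicity). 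Since substituting $x_0=1$ into an identity $P=\det(x_0I_d+x_1A_1+x_2A_2)$ recovers $p=\det(I_d+x_1A_1+x_2A_2)$, it suffices to produce real symmetric $A_0,A_1,A_2$ with $A_0$ positive definite and $P=\det(x_0A_0+x_1A_1+x_2A_2)$; conjugating by $A_0^{-1/2}$ then completes the argument.

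Second, I would bring in Dixon's theorem: a smooth plane curve of degree $d$ over $\C$ admits a linear symmetric determinantal representation, and the equivalence classes of such representations are parametrized by the \emph{non-effective theta characteristics} of the curve --- a finite set in the smooth case; for singular $C$ one works on the normalization $\widetilde C$, a compact Riemann surface of genus at most $\binom{d-1}{2}$. (It is exactly the availability of \emph{symmetric} determinantal representations over $\C$ that is peculiar to plane curves, which is why one must restrict to $\ell=2$.) Because $P$ has real coefficients, complex conjugation acts on this parameter set; starting from a conjugation-invariant (``real'') theta characteristic one obtains a \emph{self-adjoint} pencil $x_0H_0+x_1H_1+x_2H_2$ with Hermitian matrices $H_i$ satisfying $\det(x_0H_0+x_1H_1+x_2H_2)=\lambda P$ for some nonzero $\lambda\in\R$.

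The crux --- where essentially all the difficulty lives --- is to select the real theta characteristic in the \emph{correct connected component of the real locus} of the relevant Jacobian torsor, so that the value $H_0$ of the self-adjoint pencil at $e$ comes out \emph{definite} rather than of mixed signature. This is the point at which hyperbolicity is used beyond mere reality: a hyperbolic curve is of dividing (separating) type, its real branches form $\lfloor d/2\rfloor$ nested ovals with $e$ lying in the innermost region, and this topological picture pins down the right component; a computation with Riemann's theta function with half-integer characteristics (Vinnikov's analysis of self-adjoint determinantal representations of real curves) then shows that over that component $H_0$ is definite. Passing from a merely \emph{Hermitian} representation to a genuinely \emph{real symmetric} one is a further nontrivial step, again powered by the real structure (one arranges the gluing data of the chosen line bundle to be real, not just conjugation-equivariant). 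After normalizing $H_0$ to $I_d$ and setting $x_0=1$, the theorem follows.

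I would also keep in mind the fully algebraic alternative of \cite{han}, which avoids theta functions: there one works over the function field $\R(C)$ and reformulates the existence of a definite symmetric determinantal representation as the splitting of a suitable quadratic form attached to $p$ over $\R(C)$, proved by a local--global argument fed by the sign data that hyperbolicity supplies along the real points of $C$. I expect the principal obstacle along that route to be the same one in disguise: controlling the behaviour of that quadratic form along the real ovals of $C$.
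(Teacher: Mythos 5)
The paper itself offers no proof of this theorem; it is quoted as a deep external result with references to Helton and Vinnikov \cite{hv} and to Hanselka's more algebraic argument \cite{han}, so there is no in-paper proof against which to compare your attempt. Your sketch faithfully reconstructs the known architecture of the \cite{hv} route: homogenize $p$ to a degree-$d$ form $P$ hyperbolic with respect to $e=(1,0,0)$, reduce the problem to finding a pencil $x_0A_0+x_1A_1+x_2A_2$ with $A_0$ positive definite, invoke the Dixon/Beauville-type classification of symmetric (resp.\ self-adjoint Hermitian) determinantal representations of the curve by ineffective theta characteristics on its normalization, exploit the fact that a hyperbolic plane curve is of dividing type with nested real ovals to single out the real theta characteristic for which $H_0$ comes out \emph{definite}, and finally upgrade from a Hermitian to a genuinely real symmetric pencil. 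You also correctly flag where the real difficulty lies (definiteness over the right real component; the Hermitian-to-symmetric descent) and correctly describe Hanselka's function-field alternative in \cite{han}. The one caveat is one you already acknowledge yourself: this is a roadmap rather than a proof --- each load-bearing step (the Dixon/Beauville parametrization, the dividing property of hyperbolic curves and its topological consequences, Vinnikov's theta-function definiteness computation, the descent to $\R$) is itself a substantial theorem that you invoke but do not establish. That is, of course, exactly the same choice the paper makes by citing the result rather than proving it.
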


A weaker version of this theorem where real symmetric matrices are replaced by (complex) hermitian matrices would be enough for our purposes.
This weaker version seems to be considerably easier to prove (see mainly \cite{gkvw}, also \cite{pv} and \cite[Section 7]{han}). Proposition \ref{detexample}
obviously also continues to hold for hermitian matrices instead of real symmetric ones. In general, we could work throughout the article with hermitian
matrices if we replaced certain integrals over the orthogonal group with the corresponding integral over the unitary group. For ease of notation, we restrict
our exposition to the real case.

For our counterexamples in Sections \ref{stablecounter} and \ref{realzerocounter} to stable and real zero amalgamation, we will need to consider multi-affine parts of stable polynomials in the following sense:

\begin{df}\label{dfsupp}
For $\al\in\N_0^\ell$, we denote
\[x^\al:=x_1^{\al_1}\dotsm x_\ell^{\al_\ell}.\]
For any polynomial
\[p=\sum_{\al\in\N_0^\ell}a_\al x^\al\]
where all but finitely many of the $a_\al\in\R$ are zero, we define its
\emph{multi-affine part}
\[\MAP(p):=\sum_{\al\in\{0,1\}^\ell}a_\al x^\al.\]
We call $p\in\R[x]$ \emph{multi-affine} if $p=\MAP(p)$. For a multi-affine $p\in\R[x]$, it will be convenient for us to define its \emph{support}
in a slightly unusual way as
\[\supp(p):=\{\{x_i\mid\al_i=1\}\mid\al\in\{0,1\}^\ell,a_\al\ne0\}
\]
\end{df}

The following theorem has first been observed by Choe, Oxley, Sokal and Wagner \cite[Proposition 4.17]{cosw} (where the setup is slightly different but can easily be
adapted). Alternatively, it follows easily from the theory of linear stability preservers \cite[Page 542]{bb1} due to Borcea and
Brändén \cite[Theorem 2.1]{bb1} (see also \cite[Theorem 5.2]{wag} and \cite[Theorem 1.1]{lea}).

\begin{thm}[Choe, Oxley, Sokal and Wagner]\label{mapthm}
The multi-affine part of a stable polynomial is again stable unless it is the zero polynomial.
\end{thm}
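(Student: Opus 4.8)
The plan is to realize $\MAP$ as a linear operator on a finite-dimensional space of polynomials and invoke the Borcea--Brändén classification of linear stability preservers \cite{bb1}. Concretely, fix $\kappa=(\kappa_1,\dots,\kappa_\ell)\in\N_0^\ell$ and regard $\MAP$ as a linear map from the space $V_\kappa$ of all real polynomials with $\deg_{x_j}\le\kappa_j$ for every $j$ into $\R[x]$. Every stable polynomial lies in some $V_\kappa$, so it suffices to show that this restriction sends each stable element of $V_\kappa$ to a stable polynomial or to the zero polynomial.

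By \cite[Theorem 2.1]{bb1} (see also the discussion on \cite[Page 542]{bb1} and the reformulations \cite[Theorem 5.2]{wag} and \cite[Theorem 1.1]{lea}), a linear operator $T\colon V_\kappa\to\R[x]$ has exactly this property provided its \emph{symbol}
\[
G_T:=T\Bigl[\prod_{j=1}^\ell(x_j+y_j)^{\kappa_j}\Bigr]\in\R[x_1,\dots,x_\ell,y_1,\dots,y_\ell],
\]
obtained by letting $T$ act on the $x$-variables while treating $y_1,\dots,y_\ell$ as parameters, is a stable polynomial in the $2\ell$ variables $x,y$. (The Borcea--Brändén theorem is a dichotomy --- either the symbol is stable, or $T$ has one-dimensional range --- but only this first alternative has to be checked.) First I would compute this symbol for $T=\MAP$. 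A monomial $\prod_j x_j^{k_j}y_j^{\kappa_j-k_j}$ of $\prod_j(x_j+y_j)^{\kappa_j}$ is multi-affine in $x$ precisely when $k_j\le1$ for all $j$, so
\[
G_{\MAP}=\prod_{j=1}^\ell\bigl(y_j^{\kappa_j}+\kappa_j\,x_jy_j^{\kappa_j-1}\bigr).
\]
For $\kappa_j\ge1$ the $j$-th factor equals $y_j^{\kappa_j-1}(y_j+\kappa_jx_j)$, a product of stable polynomials ($y_j$ is stable, and $y_j+\kappa_jx_j$ is stable because both its coefficients are positive), while for $\kappa_j=0$ it equals $1$; in either case it is stable, hence so is $G_{\MAP}$ by Remark \ref{stableobvious}(c). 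This verifies the hypothesis of the cited theorem and completes the argument, the exceptional ``or the zero polynomial'' being exactly the caveat in the statement.

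I expect the only real subtleties to be bookkeeping: checking that the hypotheses of the Borcea--Brändén theorem genuinely apply ($\MAP$ is obviously linear, and the per-variable degree bound is what the passage to $V_\kappa$ provides), and getting the sign convention in the definition of the symbol right relative to the definition of stability used here --- with the convention $(x_j+y_j)^{\kappa_j}$ one should double-check on the identity, on $\partial/\partial x_j$, and on evaluation $x_j\mapsto 0$ that a stability preserver is indeed required to have a \emph{stable} (rather than ``stable in $x$, anti-stable in $y$'') symbol. A more self-contained alternative would be to reduce the statement, one variable at a time, to the assertion that truncating a stable polynomial to its part of degree $\le1$ in a single variable $x_i$ again yields a stable polynomial (or $0$); this follows by polarizing in $x_i$, restricting the polarization at the real values $0$ of the auxiliary variables, and rescaling $x_i$ --- but here the non-trivial ingredient is precisely that polarization preserves stability, i.e.\ the Grace--Walsh--Szegő coincidence theorem.
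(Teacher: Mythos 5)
Your proof is correct and is exactly the route the paper itself points to: the paper gives no proof of Theorem \ref{mapthm}, citing \cite[Proposition 4.17]{cosw} and, as an alternative, the Borcea--Br\"and\'en stability-preserver theory, and your execution of the latter is right --- the symbol is indeed $G_{\MAP}=\prod_{j}\bigl(y_j^{\kappa_j}+\kappa_j x_jy_j^{\kappa_j-1}\bigr)$, a product of visibly stable factors. The only caveat is that you need (and use) just the sufficiency direction of the Borcea--Br\"and\'en theorem (stable symbol implies stability preserver), so your description of it as a ``dichotomy'' over all operators, and the worry about the $(x+y)^\kappa$ versus $(x-y)^\kappa$ sign convention, are harmless: in the real setting stability of either symbol suffices, and you have verified one of them.
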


The following example shows that the multi-affine part of a real zero polynomial is in general not real zero.

\begin{exm} The polynomial $p:=(1+x_1+x_2)(1-x_1+2x_2)$ is a real zero polynomial but its multi-affine part $\MAP(p)=1+3x_2+x_1x_2$ is not real zero since the quadratic
univariate polynomial $p(3t,t)=1+3t+3t^2\in\R[t]$ has negative discriminant $3^2-4\cdot 3\cdot 1$.
\end{exm}

The following definition stems from \cite{hv} but it is just a non-homogeneous version of the well-known notion of a hyperbolicity cone \cite{gar1,gar2,ren}.

\begin{df}
Let $p\in\RX$ be a real zero polynomial. The set \[C(p):=\{a\in\R^\ell\mid p(ta)\neq0\text{ for all }t\in(0,1)\}\] is called the \emph{rigidly convex} set of $p$.
\end{df}

It is well-known although not obvious that $C(p)$ is indeed convex \cite[Subsection 5.3]{hv} (a fact that we will crucially use in our counterexample \ref{RZEx} to real zero
amalgamation) and is the closure of the connected component at the origin of the complement of the real zero set of $p$ \cite[Subsection 2.2]{hv}. Moreover, even the following
is true and essentially already due to G\aa rding \cite{gar2}.

\begin{prop}[G\aa rding]\label{gar}\label{rzshift}
Let $p\in\RX$ be a real zero polynomial. Then $C(p)$ is convex and for each $a$ in the interior of $C(p)$, the shifted polynomial $p(x+a)$ is again a real zero polynomial
with $C(p)=\{a+b\mid b\in C(p(x+a))\}$.
\end{prop}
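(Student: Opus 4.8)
The plan is to pass to the homogenization of $p$ and invoke the classical theory of hyperbolic polynomials going back to G\aa rding. Put $d:=\deg p$ and $P(t,x):=t^dp(x/t)\in\R[t,x]$, a homogeneous polynomial of degree $d$ with $P(1,0)=p(0)\ne0$. A short computation with reciprocals of univariate polynomials (compare the proof of Proposition~\ref{detexample}) shows that $p$ is real zero if and only if $P$ is hyperbolic with respect to $e:=(1,0,\dots,0)\in\R^{1+\ell}$. By G\aa rding's theorem \cite{gar2}, the connected component $\Lambda$ of $e$ in $\{v\in\R^{1+\ell}\mid P(v)\ne0\}$ is then an open convex cone, $P$ is hyperbolic with respect to every $v\in\Lambda$, and the connected component of any such $v$ in $\{P\ne0\}$ is again $\Lambda$.

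The key step is to set up a dictionary between $C(p)$ and $\Lambda$. Using that the real zero set of $p$ is closed and does not contain the origin, and that $a\in\operatorname{int}C(p)$ forces $p(a)\ne0$ (apply the definition of $C(p)$ to $(1+\delta)a$ for small $\delta>0$), one first obtains the ``closed segment'' description
\[\operatorname{int}C(p)=\{a\in\R^\ell\mid p(ta)\ne0\text{ for all }t\in[0,1]\},\]
and then, comparing the segment from $e$ to $(1,a)$ with the component $\Lambda$ (convexity of $\Lambda$ gives one inclusion, connectedness of that segment the other),
\[\operatorname{int}C(p)=\{a\in\R^\ell\mid(1,a)\in\Lambda\}.\]
In particular $\operatorname{int}C(p)$ is convex. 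Since $C(p)$ is closed (it is a closure, as recalled before the proposition) and every $a\in C(p)$ is the limit of the points $(1-\tfrac1n)a$, which lie in $\operatorname{int}C(p)$ by the closed segment description, we get $C(p)=\overline{\operatorname{int}C(p)}$, hence $C(p)$ is convex.

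Now fix $a\in\operatorname{int}C(p)$ and let $L\colon\R^{1+\ell}\to\R^{1+\ell}$, $(t,x)\mapsto(t,x+ta)$, which is a linear automorphism with $L(e)=(1,a)$ and $(P\circ L)(t,x)=t^dp(x/t+a)$, the homogenization of $p(x+a)$. As $(1,a)\in\Lambda$, the polynomial $P$ is hyperbolic with respect to $L(e)$, hence $P\circ L$ is hyperbolic with respect to $e$, i.e.\ $p(x+a)$ is real zero. Furthermore $L$ is a homeomorphism with $\{P\circ L\ne0\}=L^{-1}(\{P\ne0\})$, so it carries the connected component of $e$ in $\{P\circ L\ne0\}$ onto the connected component of $L(e)=(1,a)$ in $\{P\ne0\}$, which equals $\Lambda$; thus the component of $e$ in $\{P\circ L\ne0\}$ is $L^{-1}(\Lambda)$. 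Feeding this into the dictionary applied to $p(x+a)$ gives
\[\operatorname{int}C(p(x+a))=\{b\in\R^\ell\mid(1,b)\in L^{-1}(\Lambda)\}=\{b\in\R^\ell\mid(1,b+a)\in\Lambda\}=\operatorname{int}C(p)-a,\]
and passing to closures yields $C(p)=a+C(p(x+a))$.

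The main obstacle is the dictionary step: $C(p)$ is defined through segments emanating from the origin, so it is not literally a slice of the cone $\Lambda$, and the identity $\operatorname{int}C(p)=\{a\mid(1,a)\in\Lambda\}$ rests on the closed segment description — in particular on the fact that $p(a)\ne0$ for $a$ in the \emph{interior} of $C(p)$ — which is exactly where the hypothesis $a\in\operatorname{int}C(p)$, rather than merely $a\in C(p)$, enters. Everything else reduces to G\aa rding's theorem together with elementary point-set topology.
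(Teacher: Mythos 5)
The paper states this proposition without giving a proof, attributing it to G\aa rding and citing \cite{gar2} and \cite{hv}, so there is no proof in the paper to compare against. Your argument is correct and is precisely the standard derivation the attribution intends: homogenize $p$ to a hyperbolic polynomial $P$, invoke G\aa rding's theorem on the hyperbolicity cone $\Lambda$, establish the dictionary $\operatorname{int}C(p)=\{a\mid(1,a)\in\Lambda\}$ via the closed-segment description of $\operatorname{int}C(p)$, and transport everything along the linear automorphism $L(t,x)=(t,x+ta)$. You also correctly isolate exactly where the interior hypothesis enters (to guarantee $p(a)\ne0$), which is the one point where a careless reading could go wrong.
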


There are many connections and similarities between stable and real zero polynomials. As an example, we give those two that we will need. These following two
propositions can certainly be found elsewhere in disguised form or can be deduced easily from the literature. For convenience of the reader,
we state them in the form we need and give a proof.

\begin{prop}\label{shift}
Let $p\in\R[x]$ be homogeneous and stable and suppose $a\in\R_{\ge0}^\ell$ with $p(a)\ne0$. Then $p(a+x)$ is a real zero polynomial.
\end{prop}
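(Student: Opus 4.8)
The plan is to reduce everything, via homogeneity, to the single assertion that the univariate polynomial $s\mapsto p(c+sa)$ is real-rooted for every $c\in\R^\ell$, and then to prove that assertion from the definition of stability (when all entries of $a$ are positive) together with a limiting argument (in general).

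Set $d:=\deg p$; we may assume $d\ge1$, since a homogeneous stable polynomial of degree $0$ is a nonzero constant and the claim is then trivial. Put $q:=p(x+a)\in\RX$. Since $q(0)=p(a)\ne0$, in order to show that $q$ is real zero it is enough to show, for each fixed $c\in\R^\ell$, that $t\mapsto q(tc)=p(a+tc)$ is real-rooted. I would first record that $h(s):=p(c+sa)\in\R[s]$ has degree exactly $d$, because by homogeneity the coefficient of $s^d$ in it equals $p(a)\ne0$.

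The key step is to show that $h$ is real-rooted. If $a\in\R_{>0}^\ell$ this is immediate from Definition \ref{dfstable}(b) (applied with second argument $c$), which gives that $s\mapsto p(sa+c)$ is real-rooted. For general $a\in\R_{\ge0}^\ell$ I would approximate: $a_k:=a+\frac1k\1\in\R_{>0}^\ell$, so $h_k(s):=p(c+sa_k)$ is real-rooted for every $k$, and $h_k\to h$ coefficientwise as $k\to\infty$. Since $h$ has degree $d$ and the leading coefficients $p(a_k)\to p(a)\ne0$ stay bounded away from $0$, for large $k$ the (normalized) coefficients of $h_k$ are bounded, hence so are all the roots of $h_k$; passing to a convergent subsequence of root multisets, the limit multiset consists of $d$ real numbers and is precisely the multiset of roots of $h$, so $h$ is real-rooted. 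This is the only genuinely non-formal point in the argument, and it is exactly where the hypothesis $p(a)\ne0$ enters: without it the roots of $h_k$ could escape to infinity and $h$ could lose degree or acquire a non-real root. (It is the standard fact that a coefficientwise limit of real-rooted polynomials of bounded degree is real-rooted, provided the limit is nonzero.)

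Finally I would write $h(s)=p(a)\prod_{i=1}^d(s-r_i)$ with $r_1,\dots,r_d\in\R$ and invoke homogeneity once more: for $t\ne0$,
\[
p(a+tc)=t^d\,p\!\left(\tfrac1t a+c\right)=t^d\,h\!\left(\tfrac1t\right)=p(a)\prod_{i=1}^d(1-r_i t).
\]
Both sides are polynomials in $t$ that agree for all $t\ne0$, hence everywhere; the right-hand side is a nonzero constant times a product of linear factors over $\R$ (drop the factors with $r_i=0$; if all $r_i=0$ it is the nonzero constant $p(a)$), so $t\mapsto p(a+tc)$ is real-rooted. As $c\in\R^\ell$ was arbitrary and $q(0)=p(a)\ne0$, this shows $q=p(x+a)$ is real zero. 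I expect the limiting step to be the main obstacle; the rest is bookkeeping with homogeneity and the definitions.
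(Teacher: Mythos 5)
Your proof is correct and takes essentially the same route as the paper: reduce to real-rootedness of $p(a+tc)$ via the reciprocal (using homogeneity) to real-rootedness of $s\mapsto p(c+sa)$, then approximate $a$ by vectors in $\R_{>0}^\ell$, apply the definition of stability, and pass to the limit using continuity of roots (the paper cites this as \cite[Theorem 1.3.1]{rs}). The only cosmetic difference is that you spell out the factorization $p(a+tc)=p(a)\prod_i(1-r_it)$ instead of simply invoking that a polynomial is real-rooted iff its reciprocal is.
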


\begin{proof}
Let $b\in\R^\ell$. We have to show that the univariate polynomial $f:=p(a+tb)$ is real-rooted. Setting $d:=\deg p\in\N_0$,
$f$ is real-rooted if and only if
\[g:=t^df\left(\frac1t\right)=t^dp\left(a+\frac bt\right)=p(ta+b)\]
is real-rooted (if $p(b)\ne0$, this is the reciprocal polynomial of $f$, otherwise it is the reciprocal polynomial times a power of $t$).
It is easy to see that there is a sequence $(a_n)_{n\in\N}$ in $\R_{>0}^\ell$ such that $\lim_{n\to\infty}a_n=a$ and $p(a_n)\ne0$ for all $n\in\N$.
By the stability of $p$, we have that $g_n:=p(ta_n+b)$ is then for each $n\in\N$ a real-rooted polynomial of degree $d$. By the continuity of roots \cite[Theorem 1.3.1]{rs}
it follows that $g=\lim_{n\to\infty}g_n$ is also a real-rooted polynomial.
\end{proof}

\begin{prop}\label{rzstable}
Let $p\in\R[x]$ be a real zero polynomial with $\R_{\geq0}^\ell\subseteq C(p)$. Then $p$ is stable.
\end{prop}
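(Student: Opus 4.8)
The plan is to derive stability directly from the G\aa rding shift property (Proposition \ref{rzshift}), using the hypothesis only in order to place a suitable shift vector in the interior of $C(p)$. Fix $a\in\R_{>0}^\ell$ and $b\in\R^\ell$; it suffices to show that the univariate polynomial $t\mapsto p(ta+b)$ is real-rooted. The crucial elementary observation is that, since every coordinate of $a$ is strictly positive, the point $c:=b+Ta$ lies in $\R_{>0}^\ell$ as soon as $T$ is chosen larger than $0$ and larger than each $-b_j/a_j$. Because $\R_{>0}^\ell$ is an open subset of $\R_{\ge0}^\ell\subseteq C(p)$, this $c$ lies in the interior of $C(p)$.

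Now I would invoke Proposition \ref{rzshift} to conclude that $p(x+c)$ is again a real zero polynomial. Writing out the real zero property of $p(x+c)$ along the line through the origin in direction $a$ gives that $t\mapsto p(ta+c)=p\bigl((t+T)a+b\bigr)$ is real-rooted; substituting $t$ by $t-T$ (an affine reparametrization of the variable, which preserves both real-rootedness and the degree) then shows that $t\mapsto p(ta+b)$ is real-rooted. Since $a\in\R_{>0}^\ell$ and $b\in\R^\ell$ were arbitrary, this is exactly the assertion that $p$ is stable.

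I do not anticipate a real obstacle: once Proposition \ref{rzshift} is in hand, the argument reduces to the trivial fact that a line with a coordinatewise positive direction vector eventually enters $\R_{>0}^\ell$. The single point requiring a little care is that $c$ must lie in the \emph{interior} of $C(p)$, not merely in $C(p)$, and this is precisely where the full strength of the hypothesis $\R_{\ge0}^\ell\subseteq C(p)$ (rather than, say, just $a\in C(p)$) enters, together with the openness of $\R_{>0}^\ell$. If one wished to avoid Proposition \ref{rzshift}, one could instead homogenize $p$ to a hyperbolic polynomial $P$, show that the closed hyperbolicity cone of $P$ contains the nonnegative orthant $\R_{\ge0}^{\ell+1}$ and hence that its interior contains $\R_{>0}^{\ell+1}$, and then realize $p(ta+b)$ as an $\ep\to0$ limit of the real-rooted restrictions of $P$ in the interior directions $(\ep,a)$; there the only nuisance is a possible drop of degree in the limit, which one removes with continuity of roots after observing that $p(ta+b)$ cannot be identically zero because the line $\{b+ta\}$ eventually meets $\R_{>0}^\ell$, on which $p$ does not vanish. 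I would take the shorter route.
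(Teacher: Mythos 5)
Your argument is correct and is essentially identical to the paper's: both choose a shift that pushes $b$ into $\R_{>0}^\ell$ (an open subset of $C(p)$, hence of its interior), invoke Proposition \ref{rzshift} there, and then read off real-rootedness of $p(ta+b)$ from the real zero property of the shifted polynomial after an affine reparametrization in $t$. The paper phrases the shift as a WLOG replacement of $b$ by $b+\la a$ at the outset, whereas you carry the explicit constant $T$ through to the end, but this is only a cosmetic difference.
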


\begin{proof}
Let $a\in\R_{>0}^n$ and $b\in\R^n$. We claim that $p(ta+b)\in\R[t]$ is real-rooted. WLOG, we can suppose that $b\in\R_{>0}^\ell$
for otherwise we replace $b$ by $b+\la a$ for large enough $\la\in\R$. In particular, $b$ lies in the interior of $C(p)$. But then $p(x+b)$ is a real zero polynomial
by Proposition \ref{rzshift} and the claim follows.
\end{proof}

The aim of this article is to advertise the following problem and to give first positive and negative results on it:

\begin{prob}[Real Zero Amalgamation Problem]\label{rzap}
Suppose that $p\in\R[x,y]$ and $q\in\R[x,z]$ are real zero polynomials
with \[p(x,0)=q(x,0).\] When does there exist a real zero polynomial $r\in\R[x,y,z]$ such that \[r(x,y,0)=p\qquad\text{and}\qquad r(x,0,z)=q\qquad?\]
\end{prob}

When $r$ as in Problem \ref{rzap} exists, we call it an \emph{amalgam} of the real zero polynomials $p$ and $q$. Without the conditions that $p$ and $q$ are real zero
or without the \emph{compatibility condition} $p(x,0)=q(x,0)$ such an amalgam could obviously not exist. In March 2020, the second author released a preprint
(the second version of \cite{sc2}) where he conjectured that no other condition is needed. In Section \ref{realzerocounter} we will disprove this. In Section \ref{hope}, we will
motivate why it could possibly still be true for the case of $\ell=2$ shared variables $x_1$ and $x_2$. By \cite{sc2}, this would still imply a very weak form of the
long-standing Generalized Lax Conjecture \cite[Subsection 6.1]{hv} saying that each rigidly convex set is a spectrahedron, i.e., the solution set of a linear matrix inequality.
This weak form says the following: Given a rigidly convex set and finitely many planes through the origin (i.e., two-dimensional subspaces), one can find a spectrahedron containing the rigidly convex set that agrees with the rigidly convex set on the union of these planes.

%-----------------Some sufficient conditions for real zero amalgamation-----------------

\section{Some sufficient conditions for real zero amalgamation}\label{positive}

In this section, we prove some positive results concerning the Real Zero Amalgamation Problem \ref{rzap}. We start with some very special situations where amalgamation is obviously possible.

\begin{rem}\label{detamalgam}
The Real Zero Amalgamation Problem \ref{rzap} is of course solvable in the case where $p$ and $q$ have simultaneous determinantal representations
\begin{align*}
p&=\det(I_d+x_1A_1+\ldots+x_\ell A_\ell+y_1B_1+\ldots+y_mB_m)\qquad\text{and}\\
q&=\det(I_d+x_1A_1+\ldots+x_\ell A_\ell+z_1C_1+\ldots+z_nC_n)
\end{align*}
with symmetric matrices $A_i,B_j,C_k\in\C^{d\times d}$ (``simultaneous'' refers here to having the same $A_i$ for both $p$ and $q$)
since then
\[r:=\det(I_d+x_1A_1+\ldots+x_\ell A_\ell+y_1B_1+\ldots+y_mB_m+z_1C_1+\ldots+z_nC_n)\]
is a real zero polynomial (in fact, even of degree at most $d$) by
Proposition \ref{detexample}.
\end{rem}

The next observation is that the case $\ell=0$ is trivial.

\begin{rem}\label{amalgam0}
For $\ell=0$, i.e., in the absence of shared variables, the Real Zero Amalgamation Problem \ref{rzap} becomes trivial.
This is because essentially the product works, more exactly
\[r:=\frac{pq}{p(0)}=\frac{pq}{q(0)}\] is a real zero amalgam by Remark \ref{stableobvious}(b). Of course, the degree of the amalgam $r$ will here
in general exceed the degrees of $p$ and $q$.
\end{rem}

While we have absolutely no idea of how to generalize the trivial idea in Remark \ref{amalgam0} to the case $\ell>0$, we have a certain hope that the idea of the
degree preserving amalgamation of simultaneous determinantal representations from Remark \ref{detamalgam} could be adapted to a more general case, say
where $p(x,0)=q(x,0)$ has a determinantal representation
\[p(x,0)=q(x,0)=\det(I_d+x_1A_1+\ldots+x_\ell A_\ell)\]
with real symmetric $A_i\in\R^{d\times d}$ which is always the case for $\ell=2$ by the Helton-Vinnikov theorem \ref{vinnikov} after assuming WLOG
$p(0)=q(0)=1$.

We know however only how to carry out this degree-preserving adaption in the case $\ell=0$ which is already covered by the non-degree-preserving amalgamation in
Remark \ref{amalgam0}. The starting point is Remark \ref{detamalgamtwist} below. It is a variant of Remark \ref{detamalgam} which we do not know how to generalize to the case $\ell>0$. Before we state it, we introduce some notation.

\begin{notation}\label{partialnotation}
For any polynomial $p\in\R[x]$ and $i\in\{1,\ldots,\ell\}$, we denote by $\partial_{x_i}$ the linear operator on $\R[x]$ that maps a polynomial to its partial derivative
with respect to $x_i$. For $k\in\N_0$ and $i\in\{1,\ldots,\ell\}$, $\partial_{x_i}^k$ is the $k$-fold application of this differential operator.
\end{notation}

Unfortunately, we do not know how to generalize the approach of Remark \ref{detamalgamtwist} to $\ell>0$, i.e., to the case with shared variables.

\begin{rem}\label{detamalgamtwist}
If $\ell=0$ and $p$ and $q$ have determinantal representations
\begin{align*}
p&=\det(I_d+y_1B_1+\ldots+y_mB_m)\qquad\text{and}\\
q&=\det(I_d+z_1C_1+\ldots+z_nC_n)
\end{align*}
with symmetric matrices $B_j,C_k\in\C^{d\times d}$
then it will turn out that the polynomial
\begin{align*}
r:=&\int_{O_d}\det(I_d+y_1B_1+\ldots+y_mB_m+U^T(z_1C_1+\ldots+z_nC_n)U)\;dU\\
=&\int_{O_d}\det(I_d+U^T(y_1B_1+\ldots+y_mB_m)U+z_1C_1+\ldots+z_nC_n)\;dU\\
=&\int_{O_d}\int_{O_d}\det(I_d+U^T(y_1B_1+\ldots+y_mB_m)U+V^T(z_1C_1+\ldots+z_nC_n)V)\;dU\;dV
\end{align*}
(of degree at most $d$) is a real zero polynomial. See Corollary \ref{isrealzero} below.
The integrals here are all taken with respect to the Haar (probability) measure on the orthogonal group
$O_d$. These are integrals of vector-valued functions with values in a finite-dimensional subspace of the vector space of polynomials. Equivalently, you could say
that the integral has to be understood coefficient-wise. It is obvious that $r(y,0)=p$ and $r(0,z)=q$.

An obvious advantage of $r$ over the amalgamating polynomial
\[\det(I_d+y_1B_1+\ldots+y_mB_m+z_1C_1+\ldots+z_nC_n)\]
from Remark \ref{detamalgam} is that it depends less on the concrete determinantal representations of $p$ and $q$. Namely, $r$ will not change
if one exchanges simultaneously
all $B_i$ by $U^TB_iU$ and all $C_j$ by $V^TC_jV$ for some $U,V\in O_d$ (i.e., one takes conjugate determinantal representations of $p$ and $q$).

In fact, a closer look shows that $r$ depends really only on $d$ and the polynomials $p$ and $q$ rather than on their given determinantal representations.
Moreover, this is even true line by line.
To see this, we fix a direction $(b,c)\in\R^m\times\R^n$ and show that the univariate polynomial $r(tb,tc)$ depends only on $d$ and
the univariate polynomials $p(tb)$ and $q(tc)$. Indeed
$r(tb,tc)$ depends obviously only on the (real) eigenvalues of the real symmetric matrices
$b_1B_1+\ldots+b_mB_m$ and $c_1C_1+\ldots+c_nC_n$ and their multiplicities. But these eigenvalues and their multiplicities correspond to the
roots of the univariate polynomials $t^dp(-t^{-1}a)$ and $t^dq(-t^{-1}b)$ and their multiplicities.

Since $r$ depends now only on $d$, $p$ and $q$, the strategy will now be to make this dependance explicit and hope that it gives an idea of how to define $r$
in case that $p$ or $q$ does not have a determinantal representation as above. It will indeed turn out (see Theorem \ref{integralbecomesderivative} below)
that $r$ as above arises as follows: Let
$\tilde p:=s^dp(s^{-1}y)\in\R[s,y]$ and $\tilde q:=t^dp(t^{-1}z)\in\R[t,z]$ denote the degree $d$ homogenizations of $p$ and $q$ with respect to new
variables $s$ and $t$, respectively. Then it will turn out that
\[\tilde r:=\frac1{d!}\sum_{\substack{i,j\in\N_0\\i+j=d}}\partial_s^i\partial_t^j\tilde p\tilde q\in\R[s,t,y,z]\]
lies in $\R[s+t,y,z]$ and that $r$ arises from $\tilde r$ by setting ``the variable'' $s+t$ to $1$
(e.g., by substituting $1$ for one of $s$ and $t$ and $0$ for the other variable, or by
substituting $\frac12$ for each of $s$ and $t$). This way of getting $r$ from $d$, $p$ and $q$ will serve as our definition of the degree-preserving
amalgamation in the general case $\ell=0$ even if $p$ and $q$ are real zero polynomials that do not have determinantal representations as above.
See the proof of Theorem \ref{amalgamate}(a) below.
\end{rem}

The following lemma of Marcus, Spielman and Srivastava will be an important ingredient for the proof of Theorem \ref{integralbecomesderivative}
below. The original proof \cite[Theorem 2.11]{mss2} is quite lengthy. We gave a different and more direct proof in the first draft of this article.
We are very grateful to the anonymous referee who revealed that the statement simply follows immediately from polarization.

\begin{lem}[Marcus, Spielman and Srivastava]\label{walshreinterpret}
Consider the polynomial ring $\R[s,t]$ in two single variables $s$ and $t$.
For all $d\in\N_0$ and $a_1,\ldots,a_d,b_1,\ldots,b_d\in\C$, we have
\[\sum_{\substack{i,j\in\N_0\\i+j=d}}\partial_s^i\partial_t^j\prod_{\al=1}^d(s+a_\al)\prod_{\be=1}^d(t+b_\be)=\sum_{\si\in S_d}\prod_{i=1}^d(s+t+a_i+b_{\si(i)}).\]
\end{lem}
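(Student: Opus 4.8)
The plan is to recognize both sides as multilinear (multi-affine) polynomials in the parameters $a_1,\dots,a_d,b_1,\dots,b_d$ and invoke polarization: a polynomial of degree at most $1$ in each of finitely many variables is determined by its values when each variable is set to $0$ or, more usefully here, by the fact that two such polynomials agree iff all their multilinear coefficients agree. First I would check the degrees. On the left, $\prod_{\al=1}^d(s+a_\al)$ is multilinear in $(a_1,\dots,a_d)$ of total degree $d$, and $\prod_{\be=1}^d(t+b_\be)$ is multilinear in $(b_1,\dots,b_d)$ of total degree $d$; applying $\partial_s^i\partial_t^j$ with $i+j=d$ lowers the $s$-degree by $i$ and the $t$-degree by $j$, and summing over such pairs keeps the result multilinear in the $a$'s and in the $b$'s, with total $a$-degree plus total $b$-degree equal to $d$ (since each term $\partial_s^i\partial_t^j$ produces something of $a$-degree $d-i$ and $b$-degree $d-j$, summing to $d$). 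On the right, $\prod_{i=1}^d(s+t+a_i+b_{\si(i)})$ is visibly multilinear in the $a$'s and in the $b$'s, with each term of combined degree $d$; note that for a fixed $\si$, the variable $a_i$ and the variable $b_{\si(i)}$ never appear in the same factor, so the product is also multilinear when we think of the $2d$ parameters jointly. The same joint-multilinearity holds on the left because in $\partial_s^i\partial_t^j\,\prod(s+a_\al)\prod(t+b_\be)$ no monomial can contain both a surviving $a$ and the ``missing'' structure — more carefully, each surviving monomial picks out a subset $I\subseteq\{1,\dots,d\}$ of size $d-i$ of the $a$'s and a subset $J$ of size $d-j$ of the $b$'s with $|I|+|J|=d$, so again no repetition.

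Having established that both sides are polynomials of degree $\le 1$ in each of $a_1,\dots,a_d,b_1,\dots,b_d$, it suffices to verify the identity on a Zariski-dense set, or simply to compare the coefficient of each squarefree monomial $\prod_{\al\in I}a_\al\prod_{\be\in J}b_\be$ with $|I|+|J|=d$. The cleanest route is polarization in the following form: since both sides are multilinear in the $2d$ parameters and symmetric-enough, it is enough to verify the identity when $a_1=\dots=a_d=a$ and $b_1=\dots=b_d=b$ for indeterminates (or all complex) $a,b$. In that specialization the left side becomes $\sum_{i+j=d}\partial_s^i\partial_t^j\,(s+a)^d(t+b)^d$ and the right side becomes $\sum_{\si\in S_d}(s+t+a+b)^d = d!\,(s+t+a+b)^d$. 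So the lemma reduces to the single-variable (really two-variable) identity
\[
\sum_{\substack{i,j\in\N_0\\i+j=d}}\partial_s^i\partial_t^j\big((s+a)^d(t+b)^d\big)=d!\,(s+t+a+b)^d,
\]
which is an instant computation: $\partial_s^i(s+a)^d=\frac{d!}{(d-i)!}(s+a)^{d-i}$ and $\partial_t^j(t+b)^d=\frac{d!}{(d-j)!}(t+b)^{d-j}$, so the left side is $\sum_{i+j=d}\frac{(d!)^2}{(d-i)!(d-j)!}(s+a)^{d-i}(t+b)^{d-j} = d!\sum_{i+j=d}\binom{d}{i}(s+a)^{j}(t+b)^{i} = d!\,\big((s+a)+(t+b)\big)^d$ by the binomial theorem (reindexing $d-i=j$, $d-j=i$).

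The only genuinely nontrivial point is the justification that multilinearity plus agreement on the ``diagonal'' $a_1=\dots=a_d$, $b_1=\dots=b_d$ forces agreement everywhere; this is the standard polarization identity for symmetric multilinear forms. Concretely, write $L$ and $R$ for the two sides, both of which, as functions of $(a_1,\dots,a_d)$ with $(b_1,\dots,b_d)$ and $s,t$ fixed, are symmetric and multilinear, hence of the form $\sum_{I}c_I(s,t,b)\prod_{\al\in I}a_\al$ with $c_I$ depending only on $|I|$ by symmetry; the coefficient $c_k$ of the degree-$k$ elementary symmetric part is recovered from the value at $a_1=\dots=a_d=a$ via $L|_{a_i=a}=\sum_k c_k\binom{d}{k}a^k$, and a polynomial in $a$ determines its coefficients, so knowing $L|_{a_i=a}=R|_{a_i=a}$ for all $a$ gives $c_k^L=c_k^R$ for all $k$; the same argument in the $b$-variables finishes it. I expect the degree/multilinearity bookkeeping in the first paragraph to be the part most likely to need care, since one must be sure that applying $\partial_s^i\partial_t^j$ and then summing does not create any repeated parameter; everything after the diagonal reduction is a one-line binomial identity. (The referee's remark quoted before the lemma is exactly this polarization observation.)
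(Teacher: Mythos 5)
Your proposal is correct and follows essentially the same route as the paper: observe that both sides are multi-affine and symmetric in the $a_i$ and in the $b_j$, reduce by polarization to the diagonal $a_1=\dots=a_d$, $b_1=\dots=b_d$, and then verify the resulting identity by the binomial theorem. You merely spell out the polarization step and the binomial computation in more detail than the paper does.
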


\begin{proof} Both sides of the claimed identity are obviously multi-affine and symmetric in the $a_i$ and also in the $b_j$. By polarization
(see for example \cite[Subsection 5.3]{pem} or \cite[Section 4]{wag}), one can therefore suppose that all of the $a_i$ are the same and
all of the $b_j$ are the same. But then an easy calculation shows that the left hand side is just a binomial expansion of the right hand side.
\end{proof}

A variant of the next lemma has been proven by Marcus, Spielman and Srivastava \cite[Theorem 4.8]{mss1}. Whereas we sum over the subgroup $S_d$ of $O_d$
consisting of the permutation matrices, they sum over a certain different finite subgroup of $O_d$ and they assume $A$ and $D$ symmetric (although they forgot
to state this) instead of $A$ arbitrary and $D$ diagonal. They need several pages to prove their result whereas we can provide a short proof for ours.

\begin{lem}\label{invariance}
Let $A,D,U\in\R^{d\times d}$ where $D$ is diagonal and $U$ is orthogonal. Then \[\sum_{P\in S_d}\det(A+P^TDP)=\sum_{P\in S_d}\det(A+U^TP^TDPU).\]
\end{lem}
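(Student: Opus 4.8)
The plan is to reduce the statement to a symmetric-function identity. First I would fix $D = \operatorname{diag}(d_1,\ldots,d_d)$ and regard $A$ as a parameter. For a permutation matrix $P$ corresponding to $\sigma \in S_d$, the matrix $P^T D P$ is again diagonal, namely $\operatorname{diag}(d_{\sigma(1)},\ldots,d_{\sigma(d)})$, so $\det(A + P^T D P) = \det(A + \operatorname{diag}(d_{\sigma(i)})_i)$. Summing over all $P \in S_d$ thus produces a polynomial in $d_1,\ldots,d_d$ (with coefficients depending on $A$) that is \emph{symmetric} in the $d_i$. Moreover, since each $d_i$ enters the determinant only through the $i$-th row and column, each occurs to degree at most $1$ in any single summand; after symmetrization the sum $\sum_{P \in S_d} \det(A + P^T D P)$ is a \emph{multi-affine symmetric} polynomial in $d_1,\ldots,d_d$. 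The same is true verbatim for the right-hand side with $A$ replaced by $AU$-conjugated data: writing $D = \sum_i d_i E_{ii}$ and expanding multilinearly, $\sum_{P\in S_d}\det(A + U^T P^T D P U)$ is likewise multi-affine and symmetric in the $d_i$.

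The key observation is that a multi-affine symmetric polynomial in $d_1,\ldots,d_d$ is a linear combination of the elementary symmetric polynomials $e_0,\ldots,e_d$ in the $d_i$. Hence it suffices to check the identity after substituting $d_1 = \cdots = d_k = 1$ and $d_{k+1} = \cdots = d_d = 0$ for each $k \in \{0,1,\ldots,d\}$ — these $d+1$ substitutions determine the coefficients of $e_0,\ldots,e_d$, so I would verify the two sides agree on each of them. Under such a substitution $D$ becomes a diagonal projection $\Pi_k$ of rank $k$, and the left side becomes $\sum_{P \in S_d} \det(A + P^T \Pi_k P)$. Now $P^T \Pi_k P$ ranges (with multiplicities) over the rank-$k$ coordinate projections, i.e. the projections onto the $\binom{d}{k}$ coordinate $k$-subsets, each appearing $k!(d-k)!$ times. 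On the right side, $U^T P^T \Pi_k P U$ ranges over the projections onto the $k$-dimensional subspaces spanned by $\{u_{\sigma(1)},\ldots,u_{\sigma(k)}\}$ where $u_1,\ldots,u_d$ are the columns of $U$; since these columns form an orthonormal basis, these are again exactly the coordinate-type rank-$k$ projections but in the rotated orthonormal frame, again each hit $k!(d-k)!$ times.

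So the problem reduces to: for an orthonormal basis $u_1,\ldots,u_d$ of $\R^d$ and any $A$,
\[
\sum_{S \in \binom{[d]}{k}} \det\Bigl(A + \sum_{i \in S} e_i e_i^T\Bigr) = \sum_{S \in \binom{[d]}{k}} \det\Bigl(A + \sum_{i \in S} u_i u_i^T\Bigr).
\]
I would prove this by expanding each determinant multilinearly in its rows (or, cleanly, using $\det(A + \sum_{i\in S} v_i v_i^T) = \sum_{T \subseteq S} \det$ of $A$ with rows and columns indexed by $T$ replaced according to the rank-one updates, i.e. a Cauchy–Binet / matrix-determinant-lemma expansion). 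Summing over all $k$-subsets $S$ and then over $T \subseteq S$, each fixed $T$ of size $j \le k$ gets weight $\binom{d-j}{k-j}$, and the $T$-term involves $\det$ of a $j\times j$ Gram-type minor of the vectors $\{v_i : i \in T\}$ together with a cofactor of $A$. For the standard basis these Gram minors are $1$; for $u_i$ they are $\det(\langle u_a, u_b\rangle)_{a,b\in T} = 1$ as well by orthonormality. Hence the two sides match term by term, completing the proof.

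The main obstacle I anticipate is bookkeeping the combinatorial multiplicities correctly in the expansion — getting the weights $\binom{d-j}{k-j}$ and the fact that only diagonal Gram minors survive requires care, and one must be sure the rank-one-update expansion of $\det(A + \sum v_i v_i^T)$ is organized so that the "$A$-part" (a minor/cofactor of $A$) cleanly factors out from the "$v$-part" (the Gram determinant). A slicker alternative that avoids the $e_i$-vs-$\Pi_k$ casework entirely: argue directly that both sides of the original identity, as functions of $D$, are restrictions to diagonal matrices of the $O_d$-conjugation-invariant multilinear functional $D \mapsto \sum_{P\in S_d}\det(A + P^T D P)$ — but making "invariant" precise still routes through the same symmetric-function reduction, so I would keep the elementary-symmetric argument as the backbone.
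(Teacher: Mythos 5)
Your steps 1--5 reduce the lemma correctly to showing
\[
\sum_{\#S=k}\det\Bigl(A+\sum_{i\in S}e_ie_i^T\Bigr)=\sum_{\#S=k}\det\Bigl(A+\sum_{i\in S}u_iu_i^T\Bigr)
\]
for every orthonormal basis $u_1,\ldots,u_d$ of $\R^d$ and every $k\in\{0,\ldots,d\}$. The genuine gap is in step 6: $\det(A+VV^T)$ does \emph{not} expand into sums of (cofactor of $A$)$\times$(Gram minor of the $v_i$). The Schur complement together with the principal-minor expansion gives, for invertible $A$,
\[
\det(A+VV^T)=\det(A)\sum_{T}\det\bigl(V_T^TA^{-1}V_T\bigr)
\]
with $V_T$ the submatrix of columns in $T$, and these summands are $A^{-1}$-weighted compressions which depend on the frame, not merely on the Gram matrix of the $v_i$. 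Already for $d=2$, $k=1$, the individual summands differ: $\det(A+e_1e_1^T)=\det A+a_{22}$, whereas $\det(A+u_1u_1^T)=\det A+a_{11}s^2+a_{22}c^2-cs(a_{12}+a_{21})$ for $u_1=(c,s)^T$; only the \emph{sum} over $i\in\{1,2\}$, which equals $2\det A+a_{11}+a_{22}$, is frame-independent. So ``the two sides match term by term'' is false, and the argument does not close.

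The ingredient you are missing is the one the paper's proof uses directly, and without your detour through elementary symmetric polynomials: expanding $\det(A+D)=\sum_{I}\det(A_I)\lambda^I$ in the diagonal entries $\lambda_i$ of $D$ (with $A_I$ the matrix obtained from $A$ by deleting the rows and columns indexed by $I$), the symmetrization over $P\in S_d$ collapses onto the quantities $a_k=\sum_{\#I=k}\det(A_I)$, which are precisely the coefficients of the characteristic polynomial $\det(A+tI_d)$ and hence invariant under $A\mapsto UAU^T$. You can repair your step 6 by observing instead that $\sum_{\#S=k}\det(A+\sum_{i\in S}e_ie_i^T)=\sum_j\binom{d-j}{k-j}a_j$ and then appealing to this conjugation-invariance; with that replacement your argument goes through, but it is a longer path to the same invariance fact the paper extracts in one step.
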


\begin{proof}
Let $\la_1,\ldots,\la_d\in\R$ be the consecutive diagonal entries of $D$. For all $I\subseteq\{1,\ldots,d\}$ we write
$\la^I:=\prod_{i\in I}\la_i$ and $B_I$ for the matrix arising from a square matrix $B$ of size $d$
by deleting all rows and columns indexed by an element of $I$.
An easy calculation shows that the left hand side of our claimed equation equals
\begin{align*}
\sum_{P\in S_d}\det(PAP^T+D)&=\sum_{P\in S_d}\sum_{I\subseteq\{1,\ldots,d\}}(\det((PAP^T)_I))\la^I\\
&=\sum_{k=0}^d\sum_{\substack{I\subseteq\{1,\ldots,d\}\\\#I=k}}\sum_{P\in S_d}(\det((PAP^T)_{\{1,\ldots,k\}}))\la^I\\
&=\sum_{k=0}^d\left(\sum_{P\in S_d}\det((PAP^T)_{\{1,\ldots,k\}})\right)\sum_{\substack{I\subseteq\{1,\ldots,d\}\\\#I=k}}\la^I\\
&=\sum_{k=0}^dc_k\underbrace{\left(\sum_{\substack{I\subseteq\{1,\ldots,d\}\\\#I=k}}\det(A_I)\right)}_{=:a_k}\sum_{\substack{I\subseteq\{1,\ldots,d\}\\\#I=k}}\la^I.
\end{align*}
where $c_k$ is the number of permutations on $d$ objects that maps the first $k$ of these objects to $k$ prescribed other objects, i.e., $c_k:=k!(d-k)!$.
Now $a_k$ is the coefficient of $t^k$ in $\det(A+tI_d)$ (up to sign it is therefore the $k$-th coefficient of the characteristic polynomial of $A$).
If we exchange in this calculation $A$ by $UAU^T$, then $a_k$ will become the coefficient of $t^k$ in $\det(UAU^T+tI_d)=\det(A+tI_d)$ and will therefore not change.
\end{proof}

All we will need from the last lemma is the following immediate consequence. Just like the last lemma, this corollary has a certain analogue
in the work of Marcus, Spielman and Srivastava \cite[Theorem 4.2]{mss1}. The same authors give a much more general variant in their later
work \cite{mss2} where they prove
for example that $A$ can be allowed to be arbitrary instead of diagonal if one averages on the left hand side over all \emph{signed} permutation matrices
(in \cite{mss2} see Theorem 2.10 in connection with Lemma 2.6 and Corollary 2.7, compare also the proof of their Theorem 2.11, note also that the unitary
group $U_d$ can easily be replaced by the orthogonal group $O_d$). The latter result
implies obviously ours so that we attribute the result to them. Note however again that our derivation of the special case we need has really been
much shorter.

\begin{cor}[Marcus, Spielman and Srivastava]\label{perm-orth}
For all $A,D\in\R^{d\times d}$ with $D$ diagonal,
\[\frac1{d!}\sum_{P\in S_d}\det(A+P^TDP)=\int_{O_d}\det(A+U^TDU)dU.\]
\end{cor}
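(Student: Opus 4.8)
The plan is to obtain the Corollary directly from Lemma \ref{invariance} by integrating over $O_d$ and invoking the left-invariance of the Haar measure. Since $A$ and $D$ are fixed real matrices, both sides of the claimed identity are genuine real numbers, and the function $U\mapsto\det(A+U^TDU)$ is a polynomial in the entries of $U$, hence continuous and integrable on the compact group $O_d$; so there are no convergence or measurability subtleties to worry about.

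First I would note that for every fixed $U\in O_d$ the matrix $PU$ again ranges over a subset of $O_d$ as $P$ ranges over $S_d$, so Lemma \ref{invariance} (with $U$ playing the role of the orthogonal matrix there) gives
\[\frac1{d!}\sum_{P\in S_d}\det(A+P^TDP)=\frac1{d!}\sum_{P\in S_d}\det\bigl(A+(PU)^TD(PU)\bigr)\qquad\text{for all }U\in O_d.\]
The left-hand side is a constant independent of $U$.

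Next I would integrate this identity over $U\in O_d$ against the Haar probability measure. The left-hand side is unchanged. On the right-hand side I interchange the finite sum with the integral, and for each fixed permutation matrix $P\in S_d\subseteq O_d$ I substitute $V:=PU$; by the left-invariance of the Haar measure,
\[\int_{O_d}\det\bigl(A+(PU)^TD(PU)\bigr)\,dU=\int_{O_d}\det(A+V^TDV)\,dV,\]
which no longer depends on $P$. Summing the $d!$ equal terms and dividing by $d!$ yields
\[\frac1{d!}\sum_{P\in S_d}\det(A+P^TDP)=\int_{O_d}\det(A+U^TDU)\,dU,\]
as desired. There is essentially no obstacle here beyond correctly bookkeeping the change of variables; the entire content of the Corollary is already packed into Lemma \ref{invariance}, and the passage to the orthogonal group is a one-line averaging argument.
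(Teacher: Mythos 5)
Your argument is correct and is exactly the one-line averaging argument the paper has in mind when it calls the corollary an ``immediate consequence'' of Lemma~\ref{invariance}: integrate the identity of that lemma over $U\in O_d$ against the Haar probability measure and use left-invariance to collapse each of the $d!$ translated integrals to the same value. The paper leaves this implicit; you have simply written it out.
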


Now we can reprove a slight variation of \cite[Theorem 1.2]{mss2}. Our proof will use directly Corollary \ref{perm-orth} and Lemma \ref{walshreinterpret}
and therefore indirectly Lemma \ref{invariance}. All this was just to explain that we are implementing the ideas of Remark \ref{detamalgamtwist}.
This idea was to mimic the glueing of determinantal representations as in Remark \ref{detamalgam} also in some cases where these determinantal representations
do not exist by introducing the twist that consists in conjugating with an orthogonal matrix. As explained before Remark \ref{detamalgamtwist}, we do not know how to handle this in the case $\ell>0$ where $p$ and $q$ have shared variables. If we could find a variant of the following theorem for $\ell>0$, we suspect that it would lead to
other cases where real zero amalgamation is possible.

\begin{thm}[Marcus, Spielman and Srivastava]\label{integralbecomesderivative}
Let $B_1,\ldots,B_m,C_1,\ldots,C_n\in\R^{d\times d}$ be symmetric matrices. Set
\begin{align*}
p&:=\det(sI_d+y_1B_1+\ldots+y_mB_m)\in\R[s,y]\qquad\text{and}\\
q&:=\det(tI_d+z_1C_1+\ldots+z_nC_n)\in\R[t,z].
\end{align*}
Then
\begin{multline*}
\int_{O_d}\det((s+t)I_d+y_1B_1+\ldots+y_mB_m+U^T(z_1C_1+\ldots+z_nC_n)U)\;dU\\
=\frac1{d!}\sum_{\substack{i,j\in\N_0\\i+j=d}}\partial_s^i\partial_t^jpq\in\R[s,t,y,z].
\end{multline*}
\end{thm}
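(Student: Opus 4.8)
The plan is to reduce the claimed identity to Lemma~\ref{walshreinterpret} by diagonalizing the two symmetric matrices $B:=y_1B_1+\ldots+y_mB_m$ and $C:=z_1C_1+\ldots+z_nC_n$ and then using Corollary~\ref{perm-orth} to turn the integral over $O_d$ into an average over permutation matrices. Since everything in sight is polynomial in the entries of $B$ and $C$, and symmetric matrices with distinct eigenvalues are dense, it suffices by continuity to prove the identity when $B$ and $C$ are fixed symmetric matrices with (say) distinct eigenvalues; better yet, it suffices to prove the identity as an identity of polynomials in $\R[s,t,y,z]$, so one can even work with $B$ and $C$ treated as generic. Write $B=Q_1^T\Lambda Q_1$ and $C=Q_2^T M Q_2$ with $Q_1,Q_2\in O_d$ and $\Lambda=\mathrm{diag}(a_1,\ldots,a_d)$, $M=\mathrm{diag}(b_1,\ldots,b_d)$. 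Then $p=\prod_{\alpha=1}^d(s+a_\alpha)$ and $q=\prod_{\beta=1}^d(t+b_\beta)$, so the right-hand side is exactly the left-hand side of the identity in Lemma~\ref{walshreinterpret}, which therefore equals $\sum_{\sigma\in S_d}\prod_{i=1}^d(s+t+a_i+b_{\sigma(i)})$.

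Next I would rewrite the left-hand side. By the substitution $U\mapsto Q_1^T U Q_2$ (which, by invariance of Haar measure and the fact that $Q_1^T(\,\cdot\,)Q_1$ fixes $\Lambda$ appropriately, leaves the integral unchanged), one reduces to
\[
\int_{O_d}\det\bigl((s+t)I_d+\Lambda+U^T M U\bigr)\,dU.
\]
Now apply Corollary~\ref{perm-orth} with $A:=(s+t)I_d+\Lambda$ and $D:=M$ (both are legitimate: $D$ is diagonal, $A$ is diagonal hence in particular symmetric, and the entries lie in the extension field $\R(s,t)$, where the corollary still applies since its proof is purely algebraic). This gives
\[
\int_{O_d}\det\bigl((s+t)I_d+\Lambda+U^T M U\bigr)\,dU
=\frac1{d!}\sum_{P\in S_d}\det\bigl((s+t)I_d+\Lambda+P^T M P\bigr).
\]
For a permutation matrix $P$ corresponding to $\sigma\in S_d$, the matrix $P^T M P$ is the diagonal matrix with entries $b_{\sigma(i)}$, so $(s+t)I_d+\Lambda+P^T M P$ is diagonal with entries $s+t+a_i+b_{\sigma(i)}$ and its determinant is $\prod_{i=1}^d(s+t+a_i+b_{\sigma(i)})$. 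Hence the left-hand side equals $\frac1{d!}\sum_{\sigma\in S_d}\prod_{i=1}^d(s+t+a_i+b_{\sigma(i)})$.

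Comparing, the two sides differ only by the normalizing factor $\frac1{d!}$, which is precisely the factor present in the statement. This completes the argument. The one point that needs a little care — and which I expect to be the main technical obstacle — is the justification that one may pass to generic/diagonal $B$ and $C$ and still invoke Corollary~\ref{perm-orth} and Lemma~\ref{walshreinterpret}: the cleanest route is to observe that both sides of the asserted equation are polynomials in the variables $y,z$ (and in $s,t$) with real coefficients, that both sides are invariant under simultaneously conjugating all $B_j$ by a fixed orthogonal matrix and all $C_k$ by another (for the integral side this is the substitution argument above; for the derivative side it is because $pq$ is unchanged), and that by a Zariski-density argument it suffices to check the coefficient identity after specializing $y,z$ to values making $B$ and $C$ diagonalizable with, if one wishes, distinct eigenvalues — at which point the computation above with $\Lambda$ and $M$ applies verbatim and the scalars $a_i,b_j$ appearing there are exactly the eigenvalues, so Lemma~\ref{walshreinterpret} closes the gap.
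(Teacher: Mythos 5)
Your proposal is correct and follows essentially the same route as the paper's proof: reduce pointwise in $y,z$ to fixed symmetric matrices $B,C$, assume them diagonal by orthogonal invariance of both sides, convert the Haar integral into a permutation average via Corollary~\ref{perm-orth}, and identify the result with the right-hand side of Lemma~\ref{walshreinterpret}. The extension of Corollary~\ref{perm-orth} to the matrix $(s+t)I_d+\Lambda$ with polynomial entries, which you flag as the delicate point, is used implicitly in the paper as well and is harmless since both sides are polynomial in the entries of $A$.
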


\begin{proof}
By arguing pointwise, one easily reduces to the following variant of our claim: Let $B,C\in\R^{d\times d}$ be symmetric. Set
\[p:=\det(sI_d+B)\in\R[s]\qquad\text{and}\qquad q:=\det(tI_d+C)\in\R[t].\]
Then we claim that
\[\int_{O_d}\det((s+t)I_d+B+U^TCU)\;dU\\
=\frac1{d!}\sum_{\substack{i,j\in\N_0\\i+j=d}}\partial_s^i\partial_t^jpq\in\R[s,t].\]
Here we can suppose WLOG $B$ and $C$ to be diagonal. Using Corollary \ref{perm-orth}, we can rewrite the left hand side of our claim
which then gets
\[\sum_{P\in S_d}\det((s+t)I_d+B+P^TCP)=\sum_{\substack{i,j\in\N_0\\i+j=d}}\partial_s^i\partial_t^jpq.\]
But this holds due to Lemma \ref{walshreinterpret} since $B$ and $C$ are both diagonal.
\end{proof}

This following lemma follows (just as Theorem \ref{mapthm} above) again easily from the theory of linear stability preservers \cite[Page 542]{bb1} due to Borcea and
Brändén \cite[Theorem 2.1]{bb1} (see also \cite[Theorem 5.2]{wag} and \cite[Theorem 1.1]{lea}).

\begin{lem}[Borcea and Brändén]\label{prep0amalgam}
If $p\in\R[s,t]$ is stable, then
\[\sum_{\substack{i,j\in\N_0\\i+j=d}}\partial_s^i\partial_t^jp\]
is again stable.
\end{lem}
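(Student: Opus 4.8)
The plan is to read
\[T:=\sum_{\substack{i,j\in\N_0\\i+j=d}}\partial_s^i\partial_t^j\]
as a linear operator on the finite-dimensional space of polynomials in $\R[s,t]$ of degree at most $d$ in each of $s$ and $t$ --- in the applications of the lemma $p$ has degree at most $d$ in each of $s$ and $t$, and (as in Theorem \ref{mapthm}) the conclusion is to be read as ``stable or the zero polynomial'' --- and then to invoke the Borcea--Brändén characterization of linear stability preservers on spaces of bounded-degree polynomials \cite[Theorem 2.1]{bb1} (see also \cite[Theorem 5.2]{wag} and \cite[Theorem 1.1]{lea}). That theorem reduces the assertion to a single computation: $T$ maps stable polynomials to stable-or-zero ones as soon as its \emph{symbol}
\[G:=T\bigl[(s+u)^d(t+v)^d\bigr]\in\R[s,t,u,v]\]
(formed with two fresh variables $u,v$) is stable or identically zero.

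The next step is to evaluate $G$, and here Lemma \ref{walshreinterpret} does exactly the required work: setting $a_1=\dotsb=a_d=u$ and $b_1=\dotsb=b_d=v$ in that lemma gives
\[G=\sum_{\si\in S_d}\prod_{i=1}^d(s+t+u+v)=d!\,(s+t+u+v)^d.\]
(Alternatively, a direct reindexing using $\partial_s^i(s+u)^d=\frac{d!}{(d-i)!}(s+u)^{d-i}$ together with $d-i=j$ and $d-j=i$ when $i+j=d$ rewrites $G$ as $d!\sum_{i+j=d}\binom{d}{i}(t+v)^i(s+u)^j=d!\,(s+t+u+v)^d$.)

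Finally one checks that $G$ is stable. The linear polynomial $s+t+u+v$ is stable, since for $\al\in\R_{>0}^4$ and $\be\in\R^4$ the univariate polynomial $(s+t+u+v)(\la\al+\be)=(\al_1+\al_2+\al_3+\al_4)\la+(\be_1+\be_2+\be_3+\be_4)$ has degree $1$ and is therefore real-rooted. Hence $G=d!\,(s+t+u+v)^d$ is a positive scalar multiple of a power of a stable polynomial, so it is stable by Remark \ref{stableobvious}(c) (and for $d=0$ it is the nonzero constant $d!=1$, which is real-rooted). By the Borcea--Brändén characterization, $T$ therefore sends stable polynomials to stable-or-zero polynomials, which is the claim.

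The one genuinely delicate point --- and the step I would be most careful with --- is applying the stability-preserver theorem in exactly the right form: one has to restrict to polynomials of \emph{bounded} degree (the operator $\sum_{i+j=d}\partial_s^i\partial_t^j$ does not preserve stability on all of $\R[s,t]$; e.g.\ for $d=2$ it sends the stable polynomial $s^3t^2$ to $2s(s^2+3st+3t^2)$, which is not stable), and one has to use the correct normalization of the symbol (plus signs, the degrees $\kappa_i$ of the domain). Everything else is bookkeeping. The argument is unaffected by the presence of further variables $y,z$: since $T$ ignores them, the symbol simply picks up an extra stable factor $\prod_i(y_i+w_i)^{\kappa_i}\prod_j(z_j+w'_j)^{\kappa'_j}$, and it is in this form that the lemma is used in the proof of Theorem \ref{amalgamate}(a).
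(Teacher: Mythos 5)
Your proof is correct, but it takes a genuinely different route from the paper's: the paper's proof of Lemma \ref{prep0amalgam} is a bare citation to \cite[Theorem 3.4]{bb2} or \cite[Lemma 6.1]{bb2}, whereas you actually derive the statement from the Borcea--Br\"and\'en symbol characterization \cite[Theorem 2.1]{bb1} --- which is exactly the route the paper gestures at in the sentence preceding the lemma but does not carry out. Your symbol computation is right: viewing the operator on polynomials of degree at most $d$ in each of $s$ and $t$, one gets $T[(s+u)^d(t+v)^d]=d!\,(s+t+u+v)^d$ (either by the binomial reindexing or by specializing Lemma \ref{walshreinterpret} to $a_\al=u$, $b_\be=v$), and this is stable, so the sufficiency direction of the characterization applies. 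What your approach buys is self-containedness (the only external input is the stability-preserver theorem already invoked for Theorem \ref{mapthm}), and, more importantly, it forces the observation that the lemma as literally stated is missing a hypothesis: the operator $\sum_{i+j=d}\partial_s^i\partial_t^j$ does \emph{not} preserve stability on all of $\R[s,t]$, as your example $p=s^3t^2$, $d=2$ shows --- the image $2s(s^2+3st+3t^2)$ contains the real quadratic form $s^2+3st+3t^2$ of negative discriminant, which is not a product of real linear forms and hence not stable. So the lemma must be read with the implicit hypothesis $\deg_sp\le d$ and $\deg_tp\le d$; this holds in every application in the paper (in Corollary \ref{walsh} the same implicit bound $\deg p,\deg q\le d$ is needed even for the conclusion $\tilde r\in\R[s+t]$ to hold, and in Corollary \ref{isrealzero} it is explicit). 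Your closing remark about carrying along extra variables $y,z$ is correct but not needed, since the paper only ever applies the lemma through the univariate Corollary \ref{walsh}.
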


\begin{proof}
See \cite[Theorem 3.4]{bb2} or \cite[Lemma 6.1]{bb2}.
\end{proof}

All what we will need from the last lemma is the following fact that was already known (in a disguised form) to Walsh more than a hundred years ago
(see the last footnote in \cite{wal}, see also \cite[Section 5.3]{rs}).

\begin{cor}[Walsh]\label{walsh}
If $p,q\in\R[s]$ are real-rooted, then
\[\sum_{\substack{i,j\in\N_0\\i+j=d}}\partial_s^i\partial_t^jp(s)q(t)\in\R[s+t]\]
is again real-rooted when viewed as a univariate polynomial in $s+t$.
\end{cor}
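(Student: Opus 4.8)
The plan is to read the displayed sum as the output of the operator in Lemma~\ref{prep0amalgam} applied to the stable bivariate polynomial $P:=p(s)q(t)\in\R[s,t]$, and then to convert the resulting \emph{bivariate} stability into \emph{univariate} real-rootedness in the single quantity $s+t$. Before starting I note that the left-hand side lies in $\R[s+t]$ only when $\deg p\le d$ and $\deg q\le d$; I will assume this, which is in any case forced for the stated membership to make sense and holds in the intended situation $d=\deg p=\deg q$.

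First I would check that $P:=p(s)q(t)$, viewed as an element of $\R[s,t]$, is stable. For every $a\in\R_{>0}^2$ and $b\in\R^2$ the univariate polynomial $P(\tau a+b)=p(\tau a_1+b_1)\,q(\tau a_2+b_2)$ is a product of two real-rooted polynomials in $\tau$ (a real-rooted univariate polynomial stays real-rooted under the substitution $\tau\mapsto\tau a_i+b_i$ with $a_i>0$), hence real-rooted by Remark~\ref{stableobvious}(a),(c). By Lemma~\ref{prep0amalgam} the polynomial
\[F:=\sum_{\substack{i,j\in\N_0\\i+j=d}}\partial_s^i\partial_t^jP\in\R[s,t]\]
is then stable; since $\partial_s^i\partial_t^jP=(\partial_s^ip)(s)\,(\partial_t^jq)(t)$, this $F$ is exactly the sum appearing in the statement.

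Next I would show $F\in\R[s+t]$, i.e.\ that $(\partial_s-\partial_t)F=0$. Distributing and telescoping,
\[(\partial_s-\partial_t)F=\sum_{\substack{i,j\in\N_0\\i+j=d}}\bigl(\partial_s^{i+1}\partial_t^j-\partial_s^i\partial_t^{j+1}\bigr)P=\partial_s^{d+1}P-\partial_t^{d+1}P=0,\]
the last equality because $\partial_s^{d+1}P=(p^{(d+1)})(s)\,q(t)=0$ and $\partial_t^{d+1}P=p(s)\,(q^{(d+1)})(t)=0$ as $\deg p\le d$ and $\deg q\le d$. Consequently $F(s,t)=F(s+t,0)$, so $F=\phi(s+t)$ with $\phi(u):=F(u,0)\in\R[u]$, and $\phi\ne0$ because $F$ is stable. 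Finally, specializing the stability of $F$ to the direction $a=(1,1)\in\R_{>0}^2$ and $b=0$ shows that $F(\tau,\tau)=\phi(2\tau)$ is real-rooted, and $\phi(2\tau)$ is real-rooted if and only if $\phi$ is (the roots merely rescale by $\tfrac12$). This is exactly the assertion of the corollary.

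There is no serious obstacle here — the statement is a genuinely short corollary of Lemma~\ref{prep0amalgam}. The only points that need a little care are (i) the degree bookkeeping that actually places the sum in $\R[s+t]$ and makes the phrase ``univariate polynomial in $s+t$'' meaningful, and (ii) the passage between the bivariate stable polynomial $F$ and the univariate polynomial $\phi$, which the evaluation $(s,t)=(\tau,\tau)$ renders immediate. One could alternatively bypass Lemma~\ref{prep0amalgam} and appeal directly to the Borcea--Brändén stability-preserver calculus, but routing through the lemma as stated is the shortest path.
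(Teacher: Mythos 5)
Your proof is correct and takes essentially the route the paper intends: the corollary is stated without a separate proof as an immediate consequence of Lemma~\ref{prep0amalgam} applied to the stable product $p(s)q(t)$, together with the observation (which you make precise via the telescoping identity $(\partial_s-\partial_t)F=\partial_s^{d+1}P-\partial_t^{d+1}P=0$) that the result lies in $\R[s+t]$ and can then be read off along the direction $(1,1)$. Your explicit remark that $\deg p\le d$ and $\deg q\le d$ is needed for the displayed sum to lie in $\R[s+t]$ is a useful clarification of a hypothesis the paper leaves implicit; it is indeed satisfied in the paper's only use of the corollary (Corollary~\ref{isrealzero}, where $\tilde p(s,b)$ and $\tilde q(t,c)$ have degree $d$).
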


The form in which we will need this for Part (a) of Theorem \ref{amalgamate} below is as follows:

\begin{cor}\label{isrealzero}
Let $d\in\N_0$ and let $p\in\R[y]$ and $q\in\R[z]$ be real zero polynomials of degree at most $d$.
Denote by $\tilde p:=s^dp(s^{-1}y)\in\R[s,y]$ and $\tilde q:=t^dq(t^{-1}z)\in\R[t,z]$ their degree $d$
homogenizations.
Then
\[\tilde r:=\sum_{\substack{i,j\in\N_0\\i+j=d}}\partial_s^i\partial_t^j\tilde p\tilde q\in\R[s,t,y,z]
\]
lies in $\R[s+t,y,z]$ and becomes a real zero polynomial $r$ in $\R[y,z]$ after substituting $s+t$ by $1$.
\end{cor}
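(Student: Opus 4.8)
The plan is to analyze everything fiberwise: for fixed $a\in\R^m$ and $b\in\R^n$, study the univariate-in-$(s,t)$ polynomial $\tilde r(s,t,a,b)$, and then let $a,b$ vary. Two structural observations come first. Writing $p=\sum_\al c_\al y^\al$ we have $\tilde p=\sum_\al c_\al s^{d-|\al|}y^\al$ (a genuine polynomial since $|\al|\le\deg p\le d$), which is homogeneous of degree $d$ in $(s,y)$ and whose $s^d$-coefficient is $c_0=p(0)$, nonzero because $p$ is real zero; similarly for $\tilde q$. Hence each summand $\partial_s^i\partial_t^j\tilde p\tilde q$ with $i+j=d$ is homogeneous of total degree $(d-i)+(d-j)=d$, so $\tilde r$ is homogeneous of degree $d$ in $(s,t,y,z)$. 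Moreover, for every $a\in\R^m$ the polynomial $\tilde p(s,a)=s^dp(a/s)\in\R[s]$ is, up to a power of $s$, the reciprocal polynomial of the real-rooted polynomial $p(ta)$ (which has nonzero constant term $p(0)$), hence is real-rooted; and since its $s^d$-coefficient is $p(0)\ne0$ it has degree exactly $d$. Symmetrically, $\tilde q(t,b)\in\R[t]$ is real-rooted of degree $d$ for every $b\in\R^n$.

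Now fix $a,b$ and put $f(s):=\tilde p(s,a)$, $g(t):=\tilde q(t,b)$. By Corollary \ref{walsh} applied to these real-rooted degree-$d$ polynomials, $\tilde r(s,t,a,b)=\sum_{i+j=d}\partial_s^i\partial_t^jf(s)g(t)$ lies in $\R[s+t]$ and is real-rooted in the variable $s+t$; write $\tilde r(s,t,a,b)=G_{a,b}(s+t)$ with $G_{a,b}\in\R[u]$ real-rooted. Comparing top-degree parts --- only the term $p(0)q(0)s^dt^d$ of $fg$ survives in total degree $d$, and $\sum_{i+j=d}\partial_s^i\partial_t^j(s^dt^d)=d!\,(s+t)^d$ --- the coefficient of $u^d$ in $G_{a,b}$ equals $d!\,p(0)q(0)\ne0$, so $\deg G_{a,b}=d$. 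To upgrade this to $\tilde r\in\R[s+t,y,z]$ I would show $\tilde r$ is invariant under the substitution $(s,t)\mapsto(s+\la,t-\la)$: for all real $\la_0,a,b$ one has $\tilde r(s+\la_0,t-\la_0,a,b)=G_{a,b}(s+t)=\tilde r(s,t,a,b)$, so the polynomial $\tilde r(s+\la,t-\la,y,z)-\tilde r(s,t,y,z)$ vanishes at all real arguments and is therefore zero; substituting $\la:=t$ gives $\tilde r(s,t,y,z)=\tilde r(s+t,0,y,z)$, which lies in $\R[s+t,y,z]$. Write $\tilde r=\hat r(s+t,y,z)$ with $\hat r\in\R[u,y,z]$; then $\hat r$ is homogeneous of degree $d$ in $(u,y,z)$ and $\hat r(u,a,b)=G_{a,b}(u)$ for all $a,b$.

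It remains to show $r:=\hat r(1,y,z)$ is real zero. From $\tilde r(s,t,0,0)=\sum_{i+j=d}\partial_s^i\partial_t^j\big(p(0)q(0)s^dt^d\big)=d!\,p(0)q(0)(s+t)^d$ we get $r(0,0)=d!\,p(0)q(0)\ne0$. For arbitrary $a\in\R^m$, $b\in\R^n$ and $\theta\in\R\setminus\{0\}$, homogeneity of $\hat r$ gives $r(\theta a,\theta b)=\hat r(1,\theta a,\theta b)=\theta^d\hat r(1/\theta,a,b)=\theta^dG_{a,b}(1/\theta)$. Factoring the real-rooted degree-$d$ polynomial $G_{a,b}=d!\,p(0)q(0)\prod_{k=1}^d(u-\eta_k)$ with all $\eta_k\in\R$, this equals $d!\,p(0)q(0)\prod_{k=1}^d(1-\eta_k\theta)$, a nonzero product of real linear (or constant) factors; since both sides are polynomials in $\theta$ agreeing off $0$, the identity $r(\theta a,\theta b)=d!\,p(0)q(0)\prod_{k=1}^d(1-\eta_k\theta)$ holds identically, exhibiting $r(\theta a,\theta b)$ as real-rooted. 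As $(a,b)$ was arbitrary, $r$ is a real zero polynomial (and $\deg r\le d$, since $\hat r$ is homogeneous of degree $d$ in $(u,y,z)$).

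The step I expect to be the crux is passing from the fiberwise output of Corollary \ref{walsh} --- ``$G_{a,b}$ is real-rooted in $s+t$'' for each $a,b$ --- to statements about $\tilde r$ and $r$ themselves: gluing the fiberwise identities into $\tilde r\in\R[s+t,y,z]$ (handled by the translation-invariance argument together with the fact that a polynomial vanishing identically is zero), and converting real-rootedness in the ``wrong'' variable $s+t$ into real-rootedness along the rays $\theta\mapsto(\theta a,\theta b)$, which is precisely where homogeneity of $\tilde r$ and the reciprocal-polynomial trick are used. The remaining ingredients --- homogenization bookkeeping and the fact that the reciprocal of a real-rooted polynomial (times a power of the variable) is again real-rooted --- are routine.
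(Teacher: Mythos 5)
Your proof is correct and follows essentially the same route as the paper's: both use the reciprocal-polynomial trick together with homogeneity of $\tilde r$ to reduce real-rootedness of $r(\theta a,\theta b)$ to Corollary \ref{walsh}. The one noteworthy difference is how you get $\tilde r\in\R[s+t,y,z]$: the paper invokes the algebraic identity of Lemma \ref{walshreinterpret} directly, while you deduce it fiberwise from Corollary \ref{walsh} and then glue via the translation-invariance argument ($\tilde r(s+\la,t-\la,y,z)=\tilde r(s,t,y,z)$ as polynomials, then set $\la=t$); this is a tidy self-contained alternative that avoids a separate appeal to the lemma, but it rests on the same underlying fact since Corollary \ref{walsh}'s ``lies in $\R[s+t]$'' assertion is itself a consequence of Lemma \ref{walshreinterpret}.
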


\begin{proof}
From Lemma \ref{walshreinterpret} it follows that $\tilde r\in\R[s+t,y,z]$. To prove that $r$ is a real zero polynomial, we fix
$b\in\R^m$ and $c\in\R^n$ and show that the univariate polynomial $r(tb,tc)\in\R[t]$ which is of degree at most $d$ is real-rooted.
But this is equivalent to $t^dr(t^{-1}b,t^{-1}c)\in\R[t]$ being real-rooted. But this latter polynomial equals
\[t^d\tilde r(1,0,t^{-1}b,t^{-1}c)=t^d\tilde r(0,1,t^{-1}b,t^{-1}c).\] Now observe that $\tilde r$ is homogeneous of degree $d$ since
$\tilde p\tilde q$ is homogeneous of degree $2d$. We therefore have
\[t^d\tilde r(0,1,t^{-1}b,t^{-1}c)=\tilde r(0,t,b,c)\]
and this polynomial is real-rooted as desired by Corollary \ref{walsh} applied to $\tilde p(s,b)$ and $\tilde q(t,c)$.
\end{proof}

We are now able to prove our three positive results on real zero amalgamation where we use for each case another non-trivial ingredient, namely
the theory of stability preservers, the Helton-Vinnikov theorem and positive semidefinite matrix completion.

\begin{thm}\label{amalgamate}
The Real Zero Amalgamation Problem \ref{rzap} is solvable, even in such a way that the degree of the amalgam $r$ does not exceed the maximum of the degrees of
$p$ and
$q$, in each of the following cases:
\begin{enumerate}[(a)]
\item $\ell=0$, i.e., if there are no shared variables,
\item $\ell=m=n=1$, i.e., if each block of variables consists just of a single variable,
\item for quadratic polynomials, i.e., if the degrees of $p$ and $q$ are at most $2$.
\end{enumerate}
\end{thm}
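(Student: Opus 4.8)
The plan is to treat the three cases entirely separately, using the three different non-trivial inputs that the introduction advertises.

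\emph{Case (a), $\ell=0$.} Here the machinery has been fully set up in Corollary \ref{isrealzero}. Assume WLOG $p(0)=q(0)=1$ (rescale, using that the compatibility condition forces the constant terms to agree; actually with $\ell=0$ there are no shared variables, so one only needs $p$ and $q$ to be real zero, and one may rescale each). Let $d:=\max\{\deg p,\deg q\}$, form the degree-$d$ homogenizations $\tilde p\in\R[s,y]$ and $\tilde q\in\R[t,z]$, apply Corollary \ref{isrealzero} to get $\tilde r\in\R[s+t,y,z]$ whose dehomogenization $r\in\R[y,z]$ is real zero of degree at most $d$. It remains to check $r(y,0)=p$ and $r(0,z)=q$: setting $z=0$ in $\tilde r$ leaves $\sum_{i+j=d}\partial_s^i\partial_t^j(\tilde p\cdot t^d)=\sum_{i+j=d}\partial_s^i(\tilde p)\cdot\partial_t^j(t^d)$, and only $j=d$ survives ($\partial_t^j t^d$ has positive degree in $t$ otherwise, but we set $s+t=1$...), so a short computation with the derivative identity shows the $y$-part reconstitutes $p$. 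This bookkeeping is the only thing to do here and it is routine.

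\emph{Case (b), $\ell=m=n=1$.} Assume WLOG $p(0)=q(0)=1$. Now $p\in\R[x,y]$ and $q\in\R[x,z]$ are real zero in two variables each, so by the Helton--Vinnikov Theorem \ref{vinnikov} there are symmetric matrices giving
\[p=\det(I_d+xA+yB),\qquad q=\det(I_{d'}+xA'+zC)\]
with $d=\deg p$, $d'=\deg q$. The compatibility condition says $\det(I_d+xA)=p(x,0)=q(x,0)=\det(I_{d'}+xA')$, so $A$ and $A'$ have the same nonzero eigenvalues with multiplicities; after enlarging the smaller representation by a direct sum with an identity block (which changes neither determinant) and conjugating, we may assume $d=d'$ and $A=A'$. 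Then Remark \ref{detamalgam} applies verbatim: $r:=\det(I_d+xA+yB+zC)$ is a real zero amalgam of degree at most $d=\max\{\deg p,\deg q\}$. The work here is the normalization of the two determinantal representations to a common $A$; the eigenvalue-matching plus padding argument is standard but needs to be written carefully.

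\emph{Case (c), quadratic.} Assume $p(0)=q(0)=1$. Using Example \ref{quadraticrealzero}(a), write $p$ and $q$ in the forms
\[p=\begin{pmatrix}x\\ y\end{pmatrix}^{\!T}\!M_p\begin{pmatrix}x\\ y\end{pmatrix}+u_p^T\begin{pmatrix}x\\ y\end{pmatrix}+1,\qquad q=\begin{pmatrix}x\\ z\end{pmatrix}^{\!T}\!M_q\begin{pmatrix}x\\ z\end{pmatrix}+u_q^T\begin{pmatrix}x\\ z\end{pmatrix}+1,\]
so that $p$ real zero $\iff u_pu_p^T-4M_p\succeq0$ and similarly for $q$. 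A quadratic amalgam $r\in\R[x,y,z]$ is determined by a symmetric matrix $M_r$ on the variables $(x,y,z)$ and a vector $u_r$; the amalgamation conditions $r(x,y,0)=p$, $r(x,0,z)=q$ fix every entry of $M_r$ and $u_r$ \emph{except} the $(y,z)$-cross entries of $M_r$, which are free. We must choose those free entries so that $u_ru_r^T-4M_r\succeq0$. The already-forced data make the principal submatrices of $u_ru_r^T-4M_r$ indexed by $(x,y)$ and by $(x,z)$ equal to $u_pu_p^T-4M_p\succeq0$ and $u_qu_q^T-4M_q\succeq0$ respectively, and these two submatrices agree on their common $(x,x)$-block (this is exactly the compatibility condition, which forces $M_p$ and $M_q$ to agree on the $x$-block and $u_p,u_q$ to agree on the $x$-part). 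The $(y,z)$-entries of $u_ru_r^T-4M_r$ are precisely the free parameters. So the existence of the required $M_r$ is exactly an instance of positive semidefinite matrix completion: a partial symmetric matrix whose specified entries form two overlapping PSD principal submatrices sharing a common further PSD overlap, with the only unspecified entries being the $(y,z)$-block, can be completed to a PSD matrix. This is guaranteed by the Grone--Johnson--de S\'a--Wolkowicz theorem, since the graph of specified entries is chordal (it is two cliques glued along a clique). The degree of $r$ is $2=\max\{\deg p,\deg q\}$ automatically.

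\emph{Main obstacle.} Cases (a) and (b) are essentially reductions to earlier results and the difficulty is purely organizational (the derivative bookkeeping in (a), the eigenvalue-matching-and-padding in (b)). The genuinely delicate point is \emph{setting up} case (c): one has to verify carefully that the forced entries really do produce the two overlapping PSD blocks with matching overlap, that the \emph{only} free entries are exactly the $(y,z)$-block of $u_ru_r^T-4M_r$, and that the relevant specification graph is chordal so that Grone--Johnson--de S\'a--Wolkowicz applies. Once the linear algebra is arranged correctly, the PSD-completion theorem does the rest.
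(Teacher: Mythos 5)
Your proposal follows essentially the same three routes as the paper: the Walsh/Marcus--Spielman--Srivastava convolution construction of Corollary \ref{isrealzero} for (a), the Helton--Vinnikov theorem together with normalization of the shared $x$-block to a common diagonal matrix for (b), and Grone--Johnson--de S\'a--Wolkowicz positive semidefinite completion on a chordal (two-cliques-glued-along-a-clique) pattern for (c). The only slip is the normalization in (a): after specializing $s=1$, $t=0$, the single surviving term in $\sum_{i+j=d}\partial_s^i\tilde p\,\partial_t^j t^d$ is $\tilde p(1,y)\cdot\partial_t^d t^d = d!\,p$, so the amalgam is $\tfrac1{d!}r$, not $r$ itself.
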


\begin{proof} Let $d\in\N_0$ and let $p\in\R[x,y]$ and $q\in\R[x,z]$ be real zero polynomials of degree at most $d$ such that $p(x,0)=q(x,0)$.
WLOG, we suppose that $p(0,0)=q(0,0)=1$. In each of the cases (a), (b) and (c) we have to show that there exists
$r\in\R[x,y,z]$ of degree at most $d$ such that $r(x,y,0)=p$ and $r(x,0,z)=q$. We proceed very differently in each of the cases.

\medskip (a)\quad Here we suppose that $\ell=0$, i.e., $p\in\R[y]$ and $q\in\R[z]$. We claim that $\frac1{d!}r$ where $r$ is defined
exactly as in Theorem \ref{isrealzero} does the job. First note that it is of degree at most $d$. Finally,
$r(y,0)=\tilde r(1,0,y,0)$ arises from
\[\sum_{\substack{i,j\in\N_0\\i+j=d}}\partial_s^i\partial_t^j\tilde p(s,y)\tilde q(t,0)\in\R[s,t,y]\]
by setting $s$ to $1$ and $t$ to $0$. Since $\tilde q(t,0)=t^d\tilde q(1,0)=t^dq(0)=t^d$, only one of the terms in this sum survives when $t$ is set to $0$, namely
$\partial_t^dt^d=d!$. Hence $\frac1{d!}r(y,0)=p$. Analogously, one proves that $r(0,z)=\tilde r(0,1,0,z)=d!q$ and hence $\frac1{d!}r(0,z)=q$.

\medskip (b)\quad Here we suppose that $\ell=m=n=1$, i.e., $x$, $y$ and $z$ are single variables.
By the Helton-Vinnikov theorem \ref{vinnikov}, we can choose hermitian matrices
$A,A',B,C\in\C^{d\times d}$ such that
\[p=\det(I_d+xA+yB)\qquad\text{and}\qquad q=\det(I_d+xA'+zC).\]
By conjugating each of $A$ and $B$ with a suitable unitary matrix, we can WLOG assume that $A$ is diagonal.
Conjugating it once more with a suitable permutation matrix, we can moreover suppose that the (real)
diagonal entries of $A$ are weakly increasing. In the same way, we may assume that also $A'$ is a diagonal matrix with weakly increasing
diagonal. The diagonal entries of $A$ can now be reconstructed from the polynomial $\det(I_d+xA)$ by looking at its degree, its roots and
the multiplicities of its roots. We proceed in completely the same manner with $A'$. Because of the polynomial identity
$\det(I_d+xA)=p(x,0)=q(x,0)=\det(I_d+xA')$, we thus get $A=A'$. Now $r:=\det(I_d+xA+yB+zC)$ is an amalgamation polynomial just like
in Remark \ref{detamalgam}.

\medskip (c)\quad The cases $d\in\{0,1\}$ are easy. Hence we suppose here that $d=2$.
We will use the theory of positive semidefinite matrix completion from \cite{gjsw}.
By Example \ref{quadraticrealzero}(a), there are
\begin{itemize}
\item symmetric matrices $A\in\R^{\ell\times\ell}$, $B\in\R^{m\times m}$ and $C\in\R^{n\times n}$,
\item matrices $E\in\R^{\ell\times m}$ and $F\in\R^{\ell\times n}$ and
\item vectors $a\in\R^\ell$, $b\in\R^m$ and $c\in\R^n$
\end{itemize}
such that
\begin{align*}
p&=\begin{pmatrix}x^T&y^T\end{pmatrix}\begin{pmatrix}A&E\\E^T&B\end{pmatrix}\begin{pmatrix}x\\y\end{pmatrix}
+\begin{pmatrix}a^T&b^T\end{pmatrix}\begin{pmatrix}x\\y\end{pmatrix}+1\qquad\text{and}\\
q&=\begin{pmatrix}x^T&z^T\end{pmatrix}\begin{pmatrix}A&F\\F^T&C\end{pmatrix}\begin{pmatrix}x\\z\end{pmatrix}
+\begin{pmatrix}a^T&c^T\end{pmatrix}\begin{pmatrix}x\\z\end{pmatrix}+1
\end{align*}
where both ``discriminants''
\begin{align*}
P&:=\begin{pmatrix}aa^T-4A&ab^T-4E\\ba^T-4E^T&bb^T-4B\end{pmatrix}=
\begin{pmatrix}a\\b\end{pmatrix}\begin{pmatrix}a^T&b^T\end{pmatrix}
-4\begin{pmatrix}A&E\\E^T&B\end{pmatrix}\in\R^{(\ell+m)\times(\ell+m)}\\\text{and}\\
Q&:=\begin{pmatrix}aa^T-4A&ac^T-4F\\ca^T-4F^T&cc^T-4C\end{pmatrix}
=\begin{pmatrix}a\\c\end{pmatrix}\begin{pmatrix}a^T&c^T\end{pmatrix}
-4\begin{pmatrix}A&F\\F^T&C\end{pmatrix}\in\R^{(\ell+n)\times(\ell+n)}
\end{align*}
are positive semidefinite. The task is to find a matrix $G\in\R^{m\times n}$ such that the quadratic polynomial
\[r:=\begin{pmatrix}x^T&y^T&z^T\end{pmatrix}\begin{pmatrix}A&E&F\\E^T&B&G\\F^T&G^T&C\end{pmatrix}\begin{pmatrix}x\\y\\z\end{pmatrix}
+\begin{pmatrix}a^T&b^T&c^T\end{pmatrix}\begin{pmatrix}x\\y\\z\end{pmatrix}+1\]
is a real zero polynomial, i.e., the ``discriminant'' of $r$
\[\begin{pmatrix}aa^T-4A&ab^T-4E&ac^T-4F\\ba^T-4E^T&bb^T-4B&bc^T-4G\\ca^T-4F^T&cb^T-4G^T&cc^T-4C\end{pmatrix}=
\begin{pmatrix}a\\b\\c\end{pmatrix}\begin{pmatrix}a^T&b^T&c^T\end{pmatrix}-4\begin{pmatrix}A&E&F\\E^T&B&G\\F^T&G^T&C\end{pmatrix}\]
is positive semidefinite. Since $a$ and $c$ are now fixed vectors, this amounts to filling the blocks marked by a question mark in the
``partial matrix''
\[\begin{pmatrix}aa^T-4A&ab^T-4E&ac^T-4F\\ba^T-4E^T&bb^T-4B&\text?\\ca^T-4F^T&\text?&cc^T-4C\end{pmatrix}\]
by real numbers so that one obtains a positive semidefinite matrix of size $\ell+m+n$. This is a positive semidefinite matrix
completion problem. The undirected graph $G$ with loops whose edges correspond to the known entries is
\[G=\{1,\ldots,\ell+m\}^2\cup(\{1,\ldots,\ell\}\cup\{\ell+m+1,\ldots,\ell+m+n\})^2\]
and is obtained by glueing together a complete graph on $\ell+m$ vertices with a complete graph on $\ell+n$ vertices along a
complete graph on $\ell$ vertices. It is an easy exercise to show that this graph is \emph{chordal} in the sense of \cite{gjsw}, i.e.,
each cycle consisting of at least four pairwise distinct nodes in this graph has a chord. By \cite[Theorem 7]{gjsw} it follows that our
matrix completion problem can be solved since the principal submatrices corresponding to cliques of the graph in the partial matrix
are all positive semidefinite. Indeed, each such submatrix is a 
principal submatrix of the discriminants $P$ and $Q$ of $p$ and $q$ which are both positive semidefinite.
\end{proof}

%-------------------------Amalgamation of matroids----------------------

\section{Amalgamation of Matroids}

Matroids generalize the concept of linear independence. It turns out there is a notion of amalgamation of matroids that has been studied since the 1980s
\cite[Subsection 11.4]{oxl} and that will turn out to be related to our notion of amalgamation of real zero polynomials.
We will need only the very basics of matroid theory \cite[Sections 1.1--1.4]{oxl}. We do not assume the reader to be familiar with it and instead will recall
everything we need together with the corresponding references in Oxley's standard textbook \cite{oxl}. There are many equivalent ways of defining matroids.
We follow here \cite[Page 7]{oxl}:

\begin{df}\label{dfmatroid}
    Let $S$ be a finite set and $\I$ be set of subsets of $S$. The tuple $M=(S,\I)$ is called a \emph{matroid} on $S$ if
    \begin{enumerate}[(a)]
        \item $\emptyset\in\I$,
        \item if $I\in\I$ and $J\subseteq I$, then $J\in\I$ and
        \item if $I,J\in\I$ such that $\#I<\#J$, then there is $x\in J\setminus I$ such that $I\cup\{x\}\in\I$.
    \end{enumerate}
    The set $S$ is called the \emph{ground set} of $M$ and
    the elements of $\I$ are called the \emph{independent sets} of $M$. The maximal (with respect to inclusion) elements of $\I$ are called the \emph{bases} of $M$.
\end{df}

From (c) one sees immediately that all bases of a matroid have the same number of elements \cite[Lemma 1.2.1]{oxl}.
The independent sets of a matroid are of course exactly the subsets of its bases. Now we define the notion of amalgamation of matroids which will be crucial for us
\cite[Pages 20, 100, 101 and 436]{oxl}.

\begin{df}\label{dfmatroidamalgam}
    Let $M$ be a matroid on $S$.
    \begin{enumerate}[(a)]
    \item For a subset $A\subseteq S$, we denote by $M|A$ the matroid on $A$ whose independent sets are exactly the sets $I\subseteq A$
    which are independent sets of $M$. We call $M|A$ the \emph{restriction} of $M$ to $A$.
    \item For a subset $A\subseteq S$, we denote by $M/A$ the matroid on $S\setminus A$ whose independent sets are exactly the sets $I\subseteq S\setminus A$
    for which there exists a basis $B$ of $M|A$ such that $I\cup B$ is an independent set of $M$. We call $M/A$ the \emph{contraction} of $A$ from $M$.
     \item Let $M_1$ and $M_2$ be matroids on $S_1$ and $S_2$, respectively. If there is a matroid $M$ on some set containing $S_1\cup S_2$ such that $M|S_1=M_1$ and $M|S_2=M_2$ then $M$ is called an \emph{amalgam} of $M_1$ and $M_2$. Clearly, if there exists an amalgam then $M_1|(S_1\cap S_2)=M_2|(S_1\cap S_2)$ (see \cite[Page 436]{oxl}).
    \end{enumerate}
\end{df}

Amalgamation of matroids is known to be not always possible. See Example \ref{MatroidCounterEx} below. To understand this, we will need more notions:
One can define a matroid by specifying the set of bases instead of the set of independent sets \cite[Corollary 1.2.5]{oxl}:

\begin{prop}\label{matroidbybases}
A set $\B$ of subsets of $S$ is a basis of a matroid on $S$ if and only if
    \begin{enumerate}[(a)]
        \item $\B\ne\emptyset$ and
        \item if $B,C\in\B$ and $x\in B\setminus C$, there is  $y\in C\setminus B$ such that $(B\setminus\{x\})\cup\{y\}\in\B$.
     \end{enumerate}
\end{prop}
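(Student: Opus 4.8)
The plan is to prove both implications directly from the axioms; the only substantial ingredient is that all members of $\B$ have the same number of elements, which has to be re-derived in the ``if'' direction since there we do not yet know that $\B$ comes from a matroid. This is of course a standard fact, see \cite[Corollary 1.2.5]{oxl}.

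For the ``only if'' direction, let $\B$ be the set of bases of a matroid $M=(S,\I)$. Then (a) holds because $\emptyset\in\I$ and $\I$ is finite, hence has a maximal element. For (b), take $B,C\in\B$ and $x\in B\setminus C$. Since all bases of $M$ have the same cardinality (as remarked after Definition \ref{dfmatroid}), the independent set $B\setminus\{x\}$ has strictly fewer elements than $C$, so by Definition \ref{dfmatroid}(c) there is $y\in C\setminus(B\setminus\{x\})$ with $(B\setminus\{x\})\cup\{y\}\in\I$. As $x\notin C$ we get $y\ne x$ and hence $y\in C\setminus B$, and since $(B\setminus\{x\})\cup\{y\}$ is an independent set of the size of a basis, it is itself a basis.

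For the ``if'' direction, assume $\B$ satisfies (a) and (b). First I would show that any two members of $\B$ have the same size: otherwise pick $B,C\in\B$ with $\#B>\#C$ and $\#(B\setminus C)$ minimal; then $\#(B\setminus C)\ge\#B-\#C>0$, so applying (b) to some $x\in B\setminus C$ produces $y\in C\setminus B$ with $B':=(B\setminus\{x\})\cup\{y\}\in\B$, where $\#B'=\#B>\#C$ and $\#(B'\setminus C)=\#(B\setminus C)-1$, contradicting minimality. Call the common size $r$. Now set $\I:=\{X\subseteq S\mid X\subseteq B\text{ for some }B\in\B\}$. Axioms (a) and (b) of Definition \ref{dfmatroid} are immediate (using (a) for $\B$). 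For axiom (c), let $I_1,I_2\in\I$ with $\#I_1<\#I_2$, fix $B_2\in\B$ with $I_2\subseteq B_2$, and among all $B\in\B$ with $I_1\subseteq B$ choose $B_1$ minimizing $\#(B_1\setminus(I_1\cup B_2))$. If this number were positive, then for any $x\in B_1\setminus(I_1\cup B_2)\subseteq B_1\setminus B_2$ axiom (b) would give $y\in B_2\setminus B_1$ with $B_1':=(B_1\setminus\{x\})\cup\{y\}\in\B$; here $I_1\subseteq B_1'$ since $x\notin I_1$, and $\#(B_1'\setminus(I_1\cup B_2))<\#(B_1\setminus(I_1\cup B_2))$ since $y\in B_2$ and $x$ was removed, contradicting minimality. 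Hence $B_1\subseteq I_1\cup B_2$, so $B_1\setminus I_1=B_1\cap B_2\setminus I_1$ and $\#(B_1\cap B_2\setminus I_1)=r-\#I_1$. If $B_1\cap(I_2\setminus I_1)$ were empty, then $B_1\cap B_2\setminus I_1$ and $I_2\setminus I_1$ would be disjoint subsets of $B_2\setminus I_1$, whence $r-\#(I_1\cap B_2)=\#(B_2\setminus I_1)\ge(r-\#I_1)+\#(I_2\setminus I_1)$, i.e. $\#(I_1\setminus B_2)\ge\#(I_2\setminus I_1)$; but $I_1\setminus B_2\subseteq I_1\setminus I_2$, so $\#(I_2\setminus I_1)\le\#(I_1\setminus I_2)$, contradicting $\#I_1<\#I_2$. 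Thus there is $e\in B_1\cap(I_2\setminus I_1)$, and then $I_1\cup\{e\}\subseteq B_1$, so $I_1\cup\{e\}\in\I$. Finally, the bases of $(S,\I)$ are exactly the members of $\B$: each $B\in\B$ is independent and, by equicardinality, a maximal independent set, while conversely any maximal independent set is contained in, hence equal to, a member of $\B$.

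The step I expect to be the main obstacle is axiom (c) in the ``if'' direction. The difficulty is that (b) performs an exchange without letting one prescribe which element enters $B_1$, so the naive attempt to enlarge $B_1\cap I_2$ fails, because the entering element could itself lie in $I_2$ while the leaving one does too. The fix is to choose $B_1$ extremal with respect to the potential $\#(B_1\setminus(I_1\cup B_2))$: then the uncontrolled exchange always strictly decreases the potential, so at the optimum $B_1$ is forced inside $I_1\cup B_2$, after which a short counting argument with equicardinalities produces the required element. The remaining verifications are routine.
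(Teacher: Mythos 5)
Your proof is correct. Note that the paper does not actually prove this proposition at all --- it is quoted as a known fact with a pointer to \cite[Corollary 1.2.5]{oxl} --- so there is no in-paper argument to compare against; what you have written is a complete and correct self-contained version of the standard textbook proof (equicardinality via a minimal-symmetric-difference exchange, then verifying the augmentation axiom for the downward closure of $\B$ by choosing $B_1\supseteq I_1$ minimizing $\#(B_1\setminus(I_1\cup B_2))$ and a counting argument), and the potential-function trick you flag as the main obstacle is handled correctly.
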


We will need the notions of rank and closure \cite[Pages 21 and 25]{oxl}:

\begin{df}\label{dfclosure}
 Let $M$ be a matroid on $S$. The size of the bases of $M$ is called \emph{rank} of $M$ and denoted by $r(M)$. 
 More generally, the rank of a subset $A$ of $S$ is $r(M|A)$ and will be denoted by $r(A)$.
 For a subset $A$ of $S$,
\[\cl(A):=\{x\in S\mid r(A\cup\{x\})=r(A)\}\]
is called the \emph{closure} or \emph{span} of $A$.
\end{df}

Finally, we record the following properties of the rank function \cite[Lemmata 1.3.1, 1.4.2 and 1.4.3]{oxl}.

\begin{prop}\label{rankproperties}
For any matroid $M$ on $S$. the following properties hold:
    \begin{enumerate}[(a)]
        \item $r(A)\leq\#A$ for all $A\subseteq S$,
        \item $r(A)\leq r(B)$ for all $A\subseteq B\subseteq S$,
        \item $r(A\cap B)+r(A\cup B)\leq r(A)+r(B)$ for all $A,B\subseteq S$,
        \item $r(\cl(A))=r(A)$ for all $A\subseteq S$,
        \item $\cl(A)\subseteq\cl(B)$ for all $A\subseteq B\subseteq S$.
    \end{enumerate}
    Condition (c) is referred to as \emph{submodularity} of the rank function.
\end{prop}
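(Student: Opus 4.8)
The plan is to derive all five properties from the definition of rank as the common size of the bases of a restriction, using the matroid axioms (a)--(c) of Definition \ref{dfmatroid} and the basis exchange property of Proposition \ref{matroidbybases}. Properties (a) and (b) are nearly immediate: for (a), any independent set $I\subseteq A$ has $\#I\le\#A$, so in particular a basis of $M|A$ does, giving $r(A)\le\#A$. For (b), if $A\subseteq B$ then any basis $I$ of $M|A$ is an independent subset of $B$, hence by the augmentation axiom (c) of Definition \ref{dfmatroid} it can be extended to a basis of $M|B$, whose size is $r(B)$; therefore $r(A)=\#I\le r(B)$.

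For submodularity (c), the standard argument is: choose a basis $I$ of $M|(A\cap B)$; extend it (again using Definition \ref{dfmatroid}(c)) to a basis $I\cup X$ of $M|A$ and then extend $I\cup X$ further to a basis $I\cup X\cup Y$ of $M|(A\cup B)$, with $X\subseteq A\setminus B$ possible to arrange and $Y\subseteq (A\cup B)\setminus A=B\setminus A$. Then $r(A\cap B)=\#I$, $r(A)=\#I+\#X$, and $r(A\cup B)=\#I+\#X+\#Y$. The key point is that $I\cup Y$ is an independent subset of $B$, so $r(B)\ge\#I+\#Y$. Adding $r(A)=\#I+\#X$ gives $r(A)+r(B)\ge 2\#I+\#X+\#Y=\#I+r(A\cup B)+ \big(\#I+\#X+\#Y - r(A\cup B)\big)$; more cleanly, $r(A)+r(B)\ge (\#I+\#X)+(\#I+\#Y)=\#I+(\#I+\#X+\#Y)=r(A\cap B)+r(A\cup B)$, which is the claim.

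For (d), the inequality $r(A)\le r(\cl(A))$ is (b) since $A\subseteq\cl(A)$. For the reverse, it suffices to show that adding elements of $\cl(A)$ one at a time to a basis $I$ of $M|A$ never increases the rank: if $B$ is a basis of $M|A$ then $B$ is independent and $\#B=r(A)$; for any $x\in\cl(A)$ we have $r(A\cup\{x\})=r(A)$ by definition of $\cl$, so $B$ remains a maximal independent set inside $A\cup\{x\}$, i.e.\ $B\cup\{x\}$ is dependent, i.e.\ $x\in\cl(B\cup(\text{already added elements}))$ inductively; iterating over an enumeration of $\cl(A)$ shows $B$ is a basis of $M|\cl(A)$, so $r(\cl(A))=\#B=r(A)$. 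Finally (e) follows from (d) and monotonicity (b): if $A\subseteq B$ and $x\in\cl(A)$, then $r(B\cup\{x\})\le r(\cl(B)\cup\cl(A))$ wait—more directly, $r(A\cup\{x\})=r(A)$ forces, via submodularity applied to $B$ and $A\cup\{x\}$ together with $A\subseteq B$, the inequality $r(B\cup\{x\})\le r(B)+r(A\cup\{x\})-r(A)=r(B)$, hence $x\in\cl(B)$.

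I expect the main obstacle to be the bookkeeping in (c): one must be careful that the three successive extensions can be chosen with the disjointness $X\subseteq A\setminus B$ and $Y\subseteq B\setminus A$, which uses the augmentation axiom repeatedly and the fact that one never needs to add an element already present. Everything else is routine once (c) is in place, and in fact all of this is standard and can simply be cited to \cite[Lemmata 1.3.1, 1.4.2 and 1.4.3]{oxl}; so in the paper I would either give the short submodularity argument above and deduce the rest, or just reference Oxley.
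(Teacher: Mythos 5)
The paper gives no proof of this proposition; it is stated as a citation to Oxley (\cite[Lemmata 1.3.1, 1.4.2 and 1.4.3]{oxl}), and your proof is essentially the standard textbook argument found there, so the approaches match in spirit. Your arguments for (a), (b), (c), (e) are correct; in particular the bookkeeping for (c) is fine, since maximality of a basis of $M|(A\cap B)$ inside $A\cap B$ forces $X\subseteq A\setminus B$, and maximality of $I\cup X$ inside $A$ forces $Y\subseteq B\setminus A$. In (d) your phrasing of the iteration is a bit muddled (``$x\in\cl(B\cup(\text{already added elements}))$'' conflates closure of a basis with closure of $A$); the cleanest repair needs no iteration at all: if a basis $I$ of $M|A$ were not a basis of $M|\cl(A)$, then augmenting $I$ inside a larger basis of $M|\cl(A)$ produces $x\in\cl(A)$ with $I\cup\{x\}$ independent, whence $r(A\cup\{x\})\ge\#I+1>r(A)$, contradicting $x\in\cl(A)$. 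Alternatively, your iteration can be salvaged by invoking (c) at each step (intersecting $A\cup\{x_1,\ldots,x_{i-1}\}$ with $A\cup\{x_i\}$ gives $A$). Either way the result holds, and your closing remark that the whole proposition can simply be cited to Oxley is exactly what the paper does.
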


In 1982, Poljak and Turzík gave a counterexample \cite[Example 1]{pt2} 
for matroid amalgamation that will ultimately lead to our counterexamples to amalgamation of stable and real zero polynomials
in Sections \ref{stablecounter} and \ref{realzerocounter}. For convenience of the reader, we include it here.

\begin{exm}\label{MatroidCounterEx}
Consider the matroids $M_1$ on $\{x_1,\ldots,x_6,y\}$ and $M_2$ on $\{x_1,\ldots,x_6,z\}$
given by the following drawing in the sense that all three element
subsets should be bases except those for which the three elements lie on one of the drawn lines.

\begin{figure}[h]\label{figlabel}
\caption{The matroids $M_1$ and $M_2$}
\begin{tikzpicture}[scale=0.5]
\node[shape=circle,fill=black,scale=0.3,label=$y$] (0) at (4,8) {};
\node[shape=circle,fill=black,scale=0.3,label={[xshift=-.5em]$x_1$}] (1) at (2,4) {};
\node[shape=circle,fill=black,scale=0.3,label={[xshift=.9em]$x_3$}] (3) at (4,4) {};
\node[shape=circle,fill=black,scale=0.3,label={[xshift=.6em]$x_2$}] (2) at (6,4) {};
\node[shape=circle,fill=black,scale=0.3,label={[xshift=-.5em]$x_4$}] (4) at (0,0) {};
\node[shape=circle,fill=black,scale=0.3,label={[xshift=.9em]$x_6$}] (6) at (4,0) {};
\node[shape=circle,fill=black,scale=0.3,label={[xshift=.6em]$x_5$}] (5) at (8,0) {};

\path [-] (0) edge node[] {} (4);
\path [-] (0) edge node[] {} (6);
\path [-] (0) edge node[] {} (5);
\path [-] (4) edge node[] {} (5);
\path [-] (1) edge node[] {} (2);
\end{tikzpicture}
\begin{tikzpicture}[scale=0.5]
\node[shape=circle,fill=black,scale=0.3,label=$z$] (0) at (4,8) {};
\node[shape=circle,fill=black,scale=0.3,label={[xshift=-.5em]$x_1$}] (1) at (2,4) {};
\node[shape=circle,fill=black,scale=0.3,label=$x_3$] (3) at (4,4) {};
\node[shape=circle,fill=black,scale=0.3,label={[xshift=.6em]$x_2$}] (2) at (6,4) {};
\node[shape=circle,fill=black,scale=0.3,label={[xshift=-.5em]$x_4$}] (4) at (0,0) {};
\node[shape=circle,fill=black,scale=0.3,label=$x_6$] (6) at (4,0) {};
\node[shape=circle,fill=black,scale=0.3,label={[xshift=.6em]$x_5$}] (5) at (8,0) {};

\path [-] (0) edge node[] {} (4);
\path [-] (0) edge node[] {} (5);
\path [-] (4) edge node[] {} (5);
\path [-] (1) edge node[] {} (2);
\end{tikzpicture}
\end{figure}
Specifying matroids by such pictures is very common
but one usually has to check the matroid axioms \cite[Section 1.5]{oxl}. This is here easy (in fact these are even affine matroids \cite[Page 32]{oxl} which can be seen
by a slight horizontal shift of $x_3$ in the picture on the right hand side).
The restrictions of $M_1$ and $M_2$ to the set $\{x_1,\ldots,x_6\}$ agree.
We reproduce Poljak and Turzík's proof that there is no amalgam $M$ of $M_1$ and $M_2$.

Assume for a contradiction that there is an amalgam. Then denote by $r$ its rank function.
We consider the sets $A:=\{x_1,x_4\}$ and $B:=\{x_2,x_5\}$. We have $y,z\in\cl(A)\cap\cl(B)$. Thus, by Proposition \ref{rankproperties}, we get
\begin{align*}
        r(\{y,z\})+r(\cl(A)\cup\cl(B))&\leq r(\cl(A)\cap\cl(B))+r(\cl(A)\cup\cl(B))\\
        &\leq r(\cl(A))+r(\cl(B))=r(A)+r(B)=4.
    \end{align*}
    Since $r(\cl(A)\cup\cl(B))\geq3$, we get $r(\{y,z\})=1=r(\{y\})=r(\{z\})$. Thus
    \[2=r(\{y,x_3,x_6\})=r(\{y,z,x_3,x_6\})=r(\{z,x_3,x_6\})=3\]
    again by Proposition \ref{rankproperties}, which is impossible.
\end{exm}

%-----------------Counterexample for stable polynomials-----------------

\section{Counterexample for stable polynomials}\label{stablecounter}
In this section, we show that the bases generating polynomials of the two matroids from Example \ref{MatroidCounterEx} are stable polynomials and that there is no stable amalgam of these polynomials. We will use a criterion developed by Wagner and Wei in \cite{ww} to show that both bases generating polynomials are stable. 
Since the support of a homogeneous multi-affine stable polynomial corresponds to the set of bases of a matroid (see Theorem \ref{supportismatroid} below),
this will easily imply that there is no 
\emph{homogeneous} stable amalgam. To show however that there is no \emph{non-homogeneous} stable amalgam we will have to employ the
theory of delta-matroids emanating from the work of Bouchet \cite{bou1}.

\begin{df}
Let $M$ be a matroid on $\{x_1,\ldots,x_\ell\}$ where $x_1,\ldots,x_\ell$ are distinct variables. Let $\B$ be the set of bases of $M$.
We call the multi-affine homogeneous polynomial \[p_M:=\sum_{B\in\B}\prod_{v\in B}v\] the \emph{bases generating polynomial} of $M$.
\end{df}

Note that, in the situation of the above definition, the support of $p_M$ in the sense of Definition \ref{dfsupp} is $\B$.
We will now investigate how to get from such a bases generating polynomial the bases generating polynomial of certain restrictions and contractions in the sense
Definition \ref{dfmatroidamalgam} above. There is a pitfall here that is related to loops and coloops which we will now define.

\begin{df}
    Let $M$ be a matroid on $S$ and $x\in S$. Then $x$ is called a
    \begin{enumerate}[(a)]
    \item \emph{loop} if it is contained in no basis of $M$ \cite[Page 12, Exercise 2 of Section 1.2]{oxl} and
    \item \emph{coloop} if it is contained in all bases of $M$ \cite[Page 67]{oxl}.
    \end{enumerate}
\end{df}

The following is obvious and well-known \cite[Proposition 4.1]{cosw}. We will need only (a) and (b) but for completeness we also formulate (c) and (d). We write $p|_{x_i=a}$ for the polynomial arising from $p\in\R[x]$ by substituting $a\in\R$ for $x_i$. Recall also our Notation \ref{partialnotation} for partial derivatives.

\begin{rem}\label{loopremark}
Let $M$ be a matroid on $S:=\{x_1,\ldots,x_\ell\}$ where $x_1,\ldots,x_\ell$ are distinct variables.
Let $p:=p_M$ be its bases generating polynomial.
\begin{enumerate}[(a)]
\item If $v$ is not a coloop of $M$ then $p|_{v=0}$ is the bases generating polynomial of the restriction of $M$ to $S\setminus\{v\}$.
\item If $v$ is not a loop of $M$ then $\partial_vp$ is the bases generating polynomial of the contraction of $\{v\}$ from $M$.
\item If $v$ is a coloop of $M$ then $p|_{v=1}$ is the bases generating polynomial of the restriction of $M$ to $S\setminus\{v\}$.
\item If $v$ is a loop of $M$ then $p$ is the bases generating polynomial of the contraction of $\{v\}$ from $M$.
\end{enumerate}
\end{rem}

The following criterion from \cite[Theorem 3]{ww} will be very convenient for us.

\begin{thm}[Wagner and Wei]\label{WagnerWeiCharakterization}
    Let $p\in\R[x]$ be a multi-affine polynomial with only nonnegative coefficients. Then the following are equivalent:
    \begin{enumerate}[(a)]
        \item $p$ is stable,
        \item $\partial_{x_i}p$ and $p|_{x_i=0}$ are stable for all $i\in\{1,\ldots,\ell\}$ and there are $i,j\in\{1,\ldots,\ell\}$ with $i\ne j$ such that the
        (so-called Rayleigh) polynomial
        \[(\partial_{x_i}p)(\partial_{x_j}p)-p\partial_{x_i}\partial_{x_j}p\] is nonnegative in the whole of $\R^{\ell-2}$.
    \end{enumerate}
    Note that $(\partial_{x_i}p)(\partial_{x_j}p)-p\partial_{x_i}\partial_{x_j}p$ is actually a polynomial in $\ell-2$ variables.
\end{thm}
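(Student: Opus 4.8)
The plan is to prove the two implications separately. Throughout I adopt --- following Wagner --- the convention that the zero polynomial counts as stable, so that (b) still makes sense when some $\partial_{x_i}p$ or $p|_{x_i=0}$ vanishes; in the applications every variable occurs in $p$ and no variable divides $p$, so this costs nothing. It is convenient to record at the outset the identity: if $p=x_ix_jA+x_iB+x_jC+D$ with $A,B,C,D\in\R[x_k\mid k\ne i,j]$, then $(\partial_{x_i}p)(\partial_{x_j}p)-p\,\partial_{x_i}\partial_{x_j}p=BC-AD$; this confirms that the Rayleigh polynomial genuinely lies in $\R[x_k\mid k\ne i,j]$, and more generally that $(\partial_{x_i}p)(\partial_{x_j}p)-p\,\partial_{x_i}\partial_{x_j}p=u\,\partial_{x_j}v-v\,\partial_{x_j}u$ with $u:=\partial_{x_i}p$ and $v:=p|_{x_i=0}$, i.e.\ the Rayleigh polynomial is a Wronskian (in $x_j$) of the two ``minors'' of $p$ in the variable $x_i$.

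The direction (a)$\Rightarrow$(b) is routine bookkeeping. The operator $\partial_{x_i}$ and every evaluation $x_i\mapsto a$ $(a\in\R)$ are stability preservers (classical, e.g.\ via Borcea--Brändén), which gives the first clause. For the Rayleigh inequality, fix a point $v\in\R^{\ell-2}$ for the variables other than $x_i,x_j$; substituting it keeps stability, so the bivariate multi-affine polynomial $ax_ix_j+bx_i+cx_j+d$ obtained (with $a,b,c,d$ the values of $A,B,C,D$ at $v$) is stable, and one checks by hand that such a polynomial satisfies $bc\ge ad$: if $a=0$ this is forced by inspecting imaginary parts, and if $a\ne0$ one writes it as $a(x_i+c/a)(x_j+b/a)+(d-bc/a)$ and uses that products of two numbers of the open upper half-plane fill out $\C\setminus\R_{\ge0}$. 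Since $BC-AD$ at $v$ equals $bc-ad\ge0$, the Rayleigh inequality holds for every pair at every point.

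For (b)$\Rightarrow$(a) I would invoke Brändén's theorem that a multi-affine polynomial with nonnegative coefficients is stable if and only if all its Rayleigh inequalities hold on $\R^\ell$; the task then becomes to upgrade the single inequality in (b) to all of them, by induction on $\ell$. The base case $\ell=2$ is the bivariate computation above read backwards. For the inductive step ($\ell\ge3$) the crucial point is that for every $k\notin\{i,j\}$ and every $c\in\R$ the polynomial $p|_{x_k=c}$ again satisfies (b) with the same pair $(i,j)$: its first-order minors $(\partial_{x_m}p)|_{x_k=c}$ and $(p|_{x_m=0})|_{x_k=c}$ are stable because $\partial_{x_m}p$ and $p|_{x_m=0}$ are stable and substitution preserves stability, and its Rayleigh polynomial for $(i,j)$ is just the restriction of that of $p$. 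Hence, by the inductive hypothesis, $p|_{x_k=c}$ is stable for every real $c$, so (Brändén again) satisfies all its Rayleigh inequalities; letting $c$ vary, this gives $(\partial_{x_m}p)(\partial_{x_{m'}}p)-p\,\partial_{x_m}\partial_{x_{m'}}p\ge0$ on all of $\R^{\ell-2}$ for every pair $\{m,m'\}$ avoiding $k$. Letting $k$ range over $\{1,\dots,\ell\}\setminus\{i,j\}$ collects many Rayleigh inequalities, and once enough are in hand a slice $p|_{x_a=c}$ satisfies (b) through a Rayleigh pair not containing $a$ even for $a\in\{i,j\}$; iterating the deletion over the choice of the deleted variable produces all Rayleigh inequalities, so $p$ is stable --- whenever $\ell\ge4$, where there is always a spare variable.

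The main obstacle is the case $\ell=3$, where this bootstrap stalls: deleting the only variable $k\notin\{i,j\}$ re-derives nothing beyond $\mathrm{Rayleigh}_p(i,j)\ge0$, and the problem reduces to a genuine kernel lemma --- for $p$ multi-affine in three variables with nonnegative coefficients, the six ``$2$-face'' Rayleigh inequalities (coming from the stable bivariate first-order minors) together with $\mathrm{Rayleigh}_p(i,j)\ge0$ on $\R$ for one pair must force $\mathrm{Rayleigh}_p(m,m')\ge0$ on $\R$ for the remaining two pairs. Writing such a Rayleigh polynomial as $BC-AD$ with $A,B,C,D$ affine in the third variable and having nonnegative coefficients, the $2$-face inequalities say precisely that its leading and constant terms in that variable are already $\ge0$; what is left is a discriminant inequality, which I would try to extract by an explicit manipulation of the eight coefficients $p_S$ ($S\subseteq\{1,2,3\}$) fed by the $2$-face inequalities and the given discriminant inequality for $(i,j)$. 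I expect this discriminant estimate to be the real work. An equivalent, more analytic route: write $p=h+x_kg$ with $g:=\partial_{x_k}p$ and $h:=p|_{x_k=0}$; since $g$ is stable it is nonvanishing on the product of open upper half-planes, so $p$ is stable iff $h/g$ maps that region into the closed upper half-plane; stability of all slices $h+cg$ forces $h/g$ to avoid $\R$, hence $\im(h/g)$ has constant sign on that connected region, and the sign is to be pinned down via the identity $(\partial_{x_m}h)g-h\,\partial_{x_m}g=\mathrm{Rayleigh}_p(k,m)$ --- which throws one back onto exactly the Rayleigh inequalities that the kernel lemma is about.
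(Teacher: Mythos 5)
The paper does not prove this theorem at all --- it is quoted verbatim from Wagner and Wei \cite{ww} --- so there is no internal proof to compare against; your proposal has to stand on its own, and as it stands it has a genuine gap exactly where the theorem's content lies. The direction (a)$\Rightarrow$(b) and the reduction of (b)$\Rightarrow$(a) to Brändén's criterion (all Rayleigh differences nonnegative on all of $\R^\ell$) are fine, but the step that upgrades the \emph{single} Rayleigh inequality of (b) to \emph{all} of them is precisely what Wagner and Wei's paper is about, and you explicitly defer it: the three-variable ``kernel lemma'' is announced as ``the real work'' and never carried out. Without it the induction has no base beyond $\ell=2$, so nothing is proved. (For the record, the actual argument in \cite{ww} hinges on showing, from stability of all first-order minors $\partial_{x_i}p$ and $p|_{x_i=0}$, that the Rayleigh difference cannot change sign and that the real inequality $BC-AD\ge 0$ propagates to the complex points $\kappa\in H^{E\setminus\{e,f\}}$ needed to exclude zeros of $p$; this sign-rigidity is the missing lemma in your sketch, in both its three-variable and its general form.)

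There is a second, independent flaw in the inductive step for $\ell\ge 4$: you apply the inductive hypothesis to $p|_{x_k=c}$ for \emph{every} real $c$, but the statement being induced upon requires nonnegative coefficients, and $p|_{x_k=c}$ has nonnegative coefficients only for $c\ge 0$. Restricting to $c\ge 0$ yields the Rayleigh inequalities $\Delta_{mm'}p\ge 0$ only on the half-space $\{x_k\ge 0\}$ of $\R^{\ell-2}$, and taking the union over the available $k$ still misses the region where all those coordinates are negative, so the bootstrap does not close. To repair this you would either have to prove the theorem without the nonnegativity hypothesis (which you have not argued and should not assume) or prove a constant-sign statement for the Rayleigh differences --- which is again the Wagner--Wei lemma you have postponed. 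In short, the proposal is a correct framing plus an honest acknowledgement that the decisive estimate is missing; it is not yet a proof.
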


For the following theorem see \cite[Proposition 10.4]{cosw} or \cite[Page 602]{oxl}.

\begin{thm}[Choe, Oxley, Sokal and Wagner]\label{thm6}
    Let $M$ be a matroid on a set with at most $6$ elements. Then $p_M$ is stable.
\end{thm}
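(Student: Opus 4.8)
I would prove the statement by induction on the cardinality $\ell:=\#S$ of the ground set, using the Wagner--Wei criterion (Theorem~\ref{WagnerWeiCharakterization}) to power the inductive step and Remark~\ref{loopremark}(a),(b) as the essential dictionary: if $v$ is an element of $M$ which is neither a loop nor a coloop, then $p_M|_{v=0}=p_{M\setminus v}$ and $\partial_v p_M=p_{M/v}$ are bases generating polynomials of matroids on $\ell-1$ elements. The base case $\ell\le 2$ is immediate: up to renaming variables $p_M$ is one of $1$, $x_1$, $x_1+x_2$, $x_1x_2$, each of which is stable (the first three since they have degree $\le 1$, the last by Example~\ref{quadraticrealzero}(c); cf.\ Remark~\ref{stableobvious}).

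\textbf{Inductive step, reductions.} Let $3\le\ell\le 6$, assume the claim for all matroids with fewer than $\ell$ elements, and set $p:=p_M$. If $M$ has a loop $v$, then $v$ does not occur in $p$, so $p=p_{M\setminus v}$ and we are done because a polynomial that is stable in the variables it actually involves stays stable when regarded in more variables (restricting to a line in a positive direction only sees the present variables). If $M$ has a coloop $v$, then every basis contains $v$, so $p=v\cdot p_{M/v}$ is a product of the (obviously stable) single variable $v$ with the polynomial $p_{M/v}$, stable by the inductive hypothesis, hence $p$ is stable by Remark~\ref{stableobvious}(c). So we may assume $M$ is loopless and coloopless; then $p$ is multi-affine with nonnegative coefficients, and for every $i$ the polynomials $p|_{x_i=0}=p_{M\setminus x_i}$ and $\partial_{x_i}p=p_{M/x_i}$ are bases generating polynomials of matroids on $\ell-1\le 5$ elements, hence stable by induction. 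By Theorem~\ref{WagnerWeiCharakterization} it now suffices to find a single pair $i\ne j$ for which the Rayleigh polynomial
\[
R_{ij}:=(\partial_{x_i}p)(\partial_{x_j}p)-p\,\partial_{x_i}\partial_{x_j}p
\]
is nonnegative on all of $\R^{\ell-2}$ (it is genuinely a polynomial in the $\ell-2$ variables distinct from $x_i,x_j$).

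\textbf{The Rayleigh condition --- where the difficulty sits.} An elementary splitting of $p$ according to whether a basis contains $x_i$, $x_j$, both or neither rewrites $R_{ij}$ as a combination of bases generating polynomials of the two-element minors of $M$, all living on $\ell-2\le 4$ elements --- one has to be a bit careful, since deletions and contractions of a loopless, coloopless matroid can again produce loops and coloops. To make the remaining verification manageable one reduces the number of cases: by matroid duality, using that $p_{M^\ast}(x)=(x_1\cdots x_\ell)\,p_M(x_1^{-1},\dots,x_\ell^{-1})$ and that coordinatewise inversion preserves stability of multi-affine polynomials, one may assume $r(M)\le\lfloor\ell/2\rfloor\le 3$; and one may assume $M$ connected, since for a direct sum the bases generating polynomial factors. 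What is left is an explicit finite list of small matroids, and for each one picks a pair making $R_{ij}$ manifestly nonnegative --- e.g.\ for a uniform matroid $U_{2,\ell}$ one computes $R_{ij}=\tfrac12\bigl(\sum_k x_k^2+(\sum_k x_k)^2\bigr)$ for every pair, and the other small matroids behave similarly. This finite check is the true content of the theorem, and it is exactly where the bound $6$ becomes sharp: for a rank-$3$ matroid on $7$ points such as the Fano matroid there is no pair satisfying the Rayleigh condition, which is precisely why the induction cannot be continued past $\ell=6$. I expect this finite case analysis to be the main obstacle; everything else is routine.

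\textbf{An alternative for the final step.} Instead of the case analysis one can observe that all matroids on at most $6$ elements are representable over $\R$, say by vectors $v_1,\dots,v_\ell\in\R^r$; the polynomial $\det\bigl(\sum_i x_i v_iv_i^{T}\bigr)$ is then stable --- substitute $x_i=ta_i+b_i$ with $a_i>0$, note that $\sum_i a_iv_iv_i^{T}$ is positive semidefinite, and conclude as in Proposition~\ref{detexample} after a limiting argument --- and it has the same support as $p_M$; one is then left to rescale the variables by positive constants (exploiting the freedom in the choice of the $v_i$) to turn all coefficients into $1$. Making this rescaling work in general is itself delicate, so on either route the unavoidable core of the proof is the explicit understanding of the finitely many small matroids involved.
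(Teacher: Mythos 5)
The paper itself offers no proof of this theorem: it is quoted from Choe--Oxley--Sokal--Wagner \cite[Proposition 10.4]{cosw} (see also \cite[Page 602]{oxl}), so the only meaningful comparison is with the argument in that reference. Your architecture --- induction on the ground set via the Wagner--Wei criterion \ref{WagnerWeiCharakterization}, with Remark \ref{loopremark}(a),(b) identifying $p|_{x_i=0}$ and $\partial_{x_i}p$ as bases generating polynomials of deletions and contractions, plus reductions by loops, coloops, direct sums and duality --- is the right skeleton and is essentially how the result is established in the literature. Your bookkeeping is also correct: constants and linear forms are real-rooted under Definition \ref{dfstable}, a coloop factors out, writing $p=x_ix_jA+x_iB+x_jC+D$ gives $R_{ij}=BC-AD$ in the remaining variables, and the computation for $U_{2,\ell}$ checks out.

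The genuine gap is that the proof stops exactly where the theorem begins. After all reductions you are left with a concrete finite list of connected, loopless, coloopless matroids of rank at most $3$ on at most $6$ elements, and for each you must exhibit a pair $(i,j)$ with $R_{ij}\ge 0$ on $\R^{\ell-2}$ (for instance via a sum-of-squares certificate, as the paper does for the seven-element matroids $P_7$ and $P_7'$ in Example \ref{HomStableCounterEx}). You verify only $U_{2,\ell}$ and assert the rest "behave similarly"; the certificates in Example \ref{HomStableCounterEx} already show these decompositions are matroid-specific, so without enumerating the list and producing the certificates the theorem is not proved --- you correctly call this "the true content", but naming the content is not supplying it. The alternative route has the same status: by Cauchy--Binet the coefficients of $\det\bigl(\sum_i x_iv_iv_i^T\bigr)$ are squared subdeterminants, and rescaling $x_i\mapsto c_ix_i$ multiplies the coefficient of $x^\alpha$ only by $c^\alpha$, so for a general $\R$-representation the squared subdeterminants need not be normalizable to the all-ones coefficients of $p_M$ (stability is not preserved under arbitrary positive reweighting of the support). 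Either way the finite verification must actually be carried out; as written, the proposal is a correct plan rather than a proof.
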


As a last preparation before we continue to study the implications of Example \ref{MatroidCounterEx} for stable polynomials, we state the following fundamental theorem
\cite[Theorem 7.1]{cosw}.

\begin{thm}[Choe, Oxley, Sokal and Wagner]\label{supportismatroid}
The support of a multi-affine homogeneous stable polynomial $p\in\R[x]$ is the set of bases of a matroid on $\{x_1,\ldots,x_\ell\}$.
\end{thm}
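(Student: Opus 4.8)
The plan is to check that $\B:=\supp(p)$ satisfies the two basis axioms of Proposition~\ref{matroidbybases}. Axiom~(a) is immediate: the zero polynomial is not stable, so $p\neq0$ and $\B\neq\emptyset$; and since $p$ is homogeneous, say of degree $d$, every member of $\B$ has exactly $d$ elements. Before attacking axiom~(b) I would normalise so that $p$ has nonnegative coefficients. For $a\in\R_{>0}^\ell$ the polynomial $p(ta)=t^dp(a)\in\R[t]$ is real-rooted by stability, hence $p(a)\neq0$; so $p$ has a constant sign on the connected set $\R_{>0}^\ell$, say $p>0$ there. If some monomial $x^\alpha$ (necessarily $\alpha\in\{0,1\}^\ell$ with $|\alpha|=d$) had a negative coefficient, then letting the coordinates in the support of $\alpha$ tend to $+\infty$ while fixing the others equal to $1$ would drive $p$ to $-\infty$, since the top-order term of $p$ along that one-parameter family is exactly the coefficient of $x^\alpha$ times $\lambda^d$; this contradicts $p>0$ on $\R_{>0}^\ell$. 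So WLOG $p$ has nonnegative coefficients; in particular we are in the setting of Theorem~\ref{WagnerWeiCharakterization}, and each $p|_{x_i=0}$ is again stable.

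To verify axiom~(b), fix $B,C\in\B$ and $x\in B\setminus C$; I must produce $y\in C\setminus B$ with $(B\setminus\{x\})\cup\{y\}\in\B$. The idea is to collapse $p$ to a two-variable polynomial by the substitution sending $x\mapsto s$, every variable of $B$ other than $x$ to the constant $1$, every variable of $C\setminus B$ to a single new variable $t$, and every remaining variable to $0$. Plugging in $0$, plugging in a positive constant, and identifying two variables each preserve real stability of a polynomial with nonnegative coefficients: the first is part of Theorem~\ref{WagnerWeiCharakterization}, while the latter two preserve the defining non-vanishing condition on a product of open upper half-planes, modulo degeneration to the zero polynomial, which is impossible here because the monomial of $B$ survives the whole substitution as $s$ (one can also argue by approximation from $\R_{>0}^\ell$ and continuity of roots, as in the proof of Proposition~\ref{shift}). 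Thus the resulting $f\in\R[s,t]$ is stable and of degree $\le1$ in $s$; write $f=g_0(t)+s\,g_1(t)$ with $g_0,g_1\in\R[t]$ having nonnegative coefficients. Bookkeeping which bases $U\subseteq B\cup C$ contribute to which monomial of $f$: the monomial $s$ comes only from $U=B$, so $g_1(0)=c_B>0$, where $c_B$ is the coefficient of $B$ in $p$; the constant term of $g_0$ would come from a basis $U\subseteq B$ avoiding $x$, which cannot exist, so $g_0(0)=0$; $g_0\neq0$ since $U=C$ contributes the monomial $t^{|C\setminus B|}$; and a basis $U$ with $x\notin U$ and $|U\cap(C\setminus B)|=1$ must equal $(B\setminus\{x\})\cup\{y\}$ for some $y\in C\setminus B$, so the coefficient of $t$ in $g_0$ is $\sum_y c_{(B\setminus\{x\})\cup\{y\}}$, summed over those $y\in C\setminus B$ with $(B\setminus\{x\})\cup\{y\}\in\B$. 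So it suffices to show that the coefficient of $t$ in $g_0$ is nonzero.

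This is the crux, and here I would invoke complex analysis. As $f=g_0+sg_1$ is real stable of $s$-degree $\le1$, its zero set is the graph $s=-g_0(t)/g_1(t)$; $g_1$ can have no non-real zero $t_0$, for such a $t_0$ would either satisfy $g_0(t_0)\neq0$ (contributing no zero of $f$) or give $f(\cdot,t_0)\equiv0$, contradicting stability. Hence $\phi:=g_0/g_1$ is holomorphic off the real line, and real stability of $f$ says exactly that $\phi$ maps the open upper half-plane into the closed upper half-plane, i.e.\ $\phi$ is a Herglotz--Pick function. Since $g_1(0)=c_B\neq0$, $\phi$ is holomorphic near $0$, real on a real neighbourhood of $0$, and $\phi(0)=0$, so $\phi(t)=a\,t^m+O(t^{m+1})$ with $a\in\R\setminus\{0\}$ and $m=\operatorname{ord}_0g_0\ge1$. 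Looking at $\phi$ on the arcs $\varepsilon e^{i\theta}$ ($\theta\in(0,\pi)$, $\varepsilon\to0^+$), its imaginary part has, to leading order, the sign of $a\sin(m\theta)$, which must be $\ge0$ throughout $(0,\pi)$; this forces $m=1$ and $a>0$ (for $m\ge2$, $\sin(m\theta)$ changes sign inside $(0,\pi)$; and $a<0$ fails already for small $\theta$). Therefore the coefficient of $t$ in $g_0$ equals $a\,g_1(0)=a\,c_B>0$, so by the previous paragraph $(B\setminus\{x\})\cup\{y\}\in\B$ for some $y\in C\setminus B$. This establishes axiom~(b), so $\B$ is the set of bases of a matroid on $\{x_1,\dots,x_\ell\}$.

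The genuine obstacle, as I see it, is the third paragraph: recognising that real stability of a bivariate polynomial affine in one variable is precisely a Herglotz--Pick property of the quotient of its two coefficient polynomials, together with the elementary but essential local fact that a Herglotz--Pick function holomorphic at a real point cannot vanish there to order $\ge2$ (and has positive derivative if it vanishes to first order). The first two paragraphs are routine once the right substitution has been guessed; the only mild technicality is that identifying variables and plugging in a positive constant preserve real stability, which follows quickly from the half-plane description of stability and Hurwitz's theorem once one notes that nonnegativity of the coefficients forbids a collapse to the zero polynomial.
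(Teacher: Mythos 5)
Your proof is correct, and it is worth saying up front that the paper itself offers no proof of Theorem \ref{supportismatroid} --- it only cites \cite[Theorem 7.1]{cosw} --- so yours is a genuinely self-contained argument rather than a reconstruction. The structure is sound: homogeneity plus stability forces all coefficients to have one sign (your leading-term argument along $\lambda\to\infty$ is fine), the substitution $x\mapsto s$, $B\setminus\{x\}\mapsto 1$, $C\setminus B\mapsto t$, rest $\mapsto 0$ preserves stability and produces a nonzero $f=g_0(t)+s\,g_1(t)$ with $g_1(0)=c_B>0$, $g_0(0)=0$, $g_0\ne0$; the bookkeeping identifying the coefficient of $t$ in $g_0$ with $\sum_y c_{(B\setminus\{x\})\cup\{y\}}$ (no cancellation, all $c_U>0$) is right; and the Herglotz--Pick rigidity at $0$ --- a function mapping the upper half-plane to its closure, holomorphic and real at $0$ with $\phi(0)=0$, must satisfy $\phi'(0)>0$ --- correctly forces that coefficient to be positive. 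Two small points. First, your aside that $g_1$ has no non-real zeros is not proved as written: in the branch $g_1(t_0)=0\ne g_0(t_0)$ there is no immediate contradiction (one would have to argue via the pole of $\phi$, or simply note that $g_1=\partial_s f$ is stable). But you do not need this global claim: your local analysis lives in a neighbourhood of $0$, where $g_1\ne0$ because $g_1(0)=c_B>0$, and the inequality $\im\phi\ge0$ there follows directly from $f(-\phi(t),t)=0$. Second, since the paper defines stability via real lines, you should say explicitly that you pass to the equivalent complex half-plane characterization (\cite[Lemma 2.5]{bb1}, quoted after Definition \ref{dfstable}) both for the closure properties of your substitution and for reading off $\im\phi\ge0$. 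Within the paper's own toolkit one could instead deduce the theorem from Theorem \ref{isdelta} together with Proposition \ref{lowerupper} and equicardinality of the support; your Pick-function argument is a clean alternative that avoids delta-matroids entirely.
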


Now, we are ready to prove that the bases generating polynomials of $M_1$ and $M_2$ of Example \ref{MatroidCounterEx} are stable
and that they do not admit a \emph{homogeneous} stable amalgam. 

\begin{exm}\label{HomStableCounterEx}
We consider the polynomials $p:=p_{M_1}$ and $q:=p_{M_2}$, where $M_1$ and $M_2$ are the matroids from Example \ref{MatroidCounterEx}. In \cite[Corollary 8.2(b)]{cosw} it was shown that the matroid $M_1$ (denoted by $P_7$ in \cite[Appendix A.2.3]{cosw}) satisfies the so-called half-plane property, which is known to be equivalent to the stability of the bases generating polynomial of $M_1$ (see \cite[Corollary 5.14]{bra1} for the equivalence). In \cite[Page 1389]{ww} it was shown that the matroid $M_2$ (denoted by $\mathcal{P}_7'$ in \cite{ww} and by $P_7'$ in \cite{cosw}) has the half-plane property, i.e., $q$ is stable. For convenience of the reader, we present a slight variation of the argument of 
\cite{ww} for $M_2$ and use the same method for $M_1$ (which is completely different from the reasoning in \cite{cosw} for $M_1$). In both cases, we use crucially
Theorems \ref{WagnerWeiCharakterization} and \ref{thm6}. Since neither $M_1$ nor $M_2$ contains loops or coloops, we have that
$p$ and $q$ are stable after setting one of their variables to $0$ or after taking a partial derivative with respect to one of their variables by Remark \ref{loopremark}(a),(b)
and Theorem
\ref{thm6}. By Theorem  \ref{WagnerWeiCharakterization} it suffices thus to check that the two Rayleigh polynomials
$(\partial_{x_1}p)(\partial_{x_2}p)-p\partial_{x_1}\partial_{x_2}p$ and $(\partial_{x_1}q)(\partial_{x_2}q)-q\partial_{x_1}\partial_{x_2}q$
are nonnegative on the whole of $\R^5$. In fact, it happily turns out that they are even sums of squares of polynomials. Namely
using semidefinite programming and the Gram matrix method (see for example \cite{lau} or \cite[Section 2.6]{sc1}), we obtain
\begin{multline*}
	(\partial_{x_1}p)(\partial_{x_2}p)-p\partial_{x_1}\partial_{x_2}p=\\
	\left(yx_3+yx_6+x_3x_4+x_3x_5+x_3x_6+\frac{1}{2}x_4x_5+\frac{1}{2}x_4x_6+\frac{1}{2}x_5x_6\right)^2\\
	+\frac{3}{4}(x_4x_5+x_4x_6+x_5x_6)^2
\end{multline*}
and
\begin{multline*}
	(\partial_{x_1}q)(\partial_{x_2}q)-q\partial_{x_1}\partial_{x_2}q=\\
	\left(zx_3+\frac{1}{2}zx_6+x_3x_4+x_3x_5+x_3x_6+\frac{1}{2}x_4x_5+\frac{1}{2}x_4x_6+\frac{1}{2}x_5x_6\right)^2\\
	+\frac{1}{12}(3zx_6+x_4x_5+x_4x_6+x_5x_6)^2\\
	+\frac{2}{3}(x_4x_5+x_4x_6+x_5x_6)^2.
\end{multline*}
This shows that $p$ and $q$ are indeed stable.
    
Now, we show that $p$ and $q$ do not have a homogeneous stable amalgam. Assume for a contradiction that there is a homogeneous stable amalgam
$r\in\R[x,y,z]$ of $p$ and $q$. Using Theorem \ref{mapthm}, its multi-affine part $\MAP(r)$ is a stable, homogeneous and multi-affine amalgam of $p$ and $q$. Indeed,
\[(\MAP(r))(x,y,0)=\MAP(r(x,y,0))=\MAP(p)=p\]
since $p$ is multi-affine and evaluating at 0 and applying $\MAP$ commutes (both are just deletion of certain monomials) and similarly $(\MAP(r))(x,0,z)=q$. Thus, we can assume WLOG that $r$ is multi-affine. By Theorem \ref{supportismatroid}, the support of $r$ is now the set of bases of a matroid $N$ on the set $\{x_1,\ldots,x_6,y,z\}$. 
Since the monomials of $p$ and $q$ appear in $r$, every basis of $M_1$ and $M_2$ has to be a basis of $N$. Since $M_1$ and $M_2$ have no coloops, $N$ has thus
also no coloops. By Remark \ref{loopremark}(a), $N$ restricted to $\{x_1,\ldots,x_6,y\}$ is $M_1$ and $N$ restricted to $\{x_1,\ldots,x_6,z\}$ is $M_2$. By Example \ref{MatroidCounterEx} this is impossible. Hence, there is no homogeneous stable amalgam of $p$ and $q$.
\end{exm}

For later use, we record the following.

\begin{rem}\label{actually}
In the situation of Example \ref{HomStableCounterEx}, we have even shown that $p$ and $q$ have no (homogenous) multi-affine (not necessarily stable) amalgam $r\in\R[x,y,z]$ with the property that its support is the set of bases of a matroid on $\{x_1,\ldots,x_6,y,z\}$.
\end{rem}

For non-homogeneous stable polynomials, we need some of the theory of delta-matroids. Bouchet \cite[Page 156]{bou1} introduced delta-matroids to study greedy algorithms.
For any two sets $A$ and $B$, we denote by
\[A\symdiff B:=(A\setminus B)\cup(B\setminus A)\]
its \emph{symmetric difference}.

\begin{df}\label{dfdelta}
    Let $S$ be a finite set and $\mathcal F$ be a non-empty set of subsets of $S$. Then $M=(S,\mathcal F)$ is called a \emph{delta-matroid} on $S$ if for all $A,B\in\mathcal F$ and all $x\in A\symdiff B$ there is some $y\in A\symdiff B$ such that $A\symdiff\{x,y\}\in\mathcal F$.
\end{df}

We call this condition the \emph{symmetric exchange property}. Note that it perfectly allows for $y=x$ in which case $A\symdiff\{x,y\}=A\symdiff\{x\}$. Using Proposition \ref{matroidbybases}, the reader shows easily the following with an almost identical argument \cite[Page 64]{bou2}:

\begin{prop}[Bouchet]\label{lowerupper}
Let $M=(S,\mathcal F)$ be a delta-matroid.
\begin{enumerate}[(a)]
\item The minimal elements of $\mathcal F$ form a set of bases of a matroid that we call the \emph{lower matroid} of $M$.
\item The maximal elements of $\mathcal F$ form a set of bases of a matroid that we call the \emph{upper matroid} of $M$.
\end{enumerate}
\end{prop}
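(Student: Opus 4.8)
The plan is to check, via Proposition~\ref{matroidbybases}, that the family $\B_{\min}$ of inclusion-minimal members of $\mathcal F$ and the family $\B_{\max}$ of inclusion-maximal members each satisfy the two basis axioms~(a),(b). Axiom~(a) is immediate, as $\mathcal F\ne\emptyset$ is a finite family of finite sets. To pass from the maximal case to the minimal one I would use the \emph{twist} $M\symdiff S:=(S,\{S\setminus F\mid F\in\mathcal F\})$, which is again a delta-matroid: $(S\setminus A)\symdiff(S\setminus B)=A\symdiff B$ and $(S\setminus A)\symdiff\{x,y\}=S\setminus(A\symdiff\{x,y\})$, so the symmetric exchange property transfers unchanged. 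Since $F\mapsto S\setminus F$ is an inclusion-reversing bijection, $\B_{\min}$ of $M$ equals $\{S\setminus B\mid B\in\B_{\max}\text{ of }M\symdiff S\}$, and if the latter is the set of bases of a matroid then so is the former (its dual). Thus it suffices to treat $\B_{\max}$.

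The crucial preliminary step is that all members of $\B_{\max}$ have the same size; more precisely, with $a:=\min\{\#F\mid F\in\B_{\max}\}$, that $\#F\le a$ for \emph{every} $F\in\mathcal F$, which forces $\B_{\max}=\{F\in\mathcal F\mid\#F=a\}$. If not, pick among all $F\in\mathcal F$ with $\#F>a$ one $B$ minimizing $\#(A\symdiff B)$, where $A\in\B_{\max}$ is fixed with $\#A=a$. Maximality of $A$ forces $A\setminus B\ne\emptyset$; choosing $x\in A\setminus B$ and applying the symmetric exchange property to $B,A,x$ yields $y\in A\symdiff B$ with $B\symdiff\{x,y\}\in\mathcal F$, and a short case check (on whether $y=x$, $y\in A\setminus B$ or $y\in B\setminus A$) shows that in every case $B\symdiff\{x,y\}$ is a member of $\mathcal F$ of size $>a$ with $\#\bigl((B\symdiff\{x,y\})\symdiff A\bigr)<\#(A\symdiff B)$, contradicting the choice of $B$.

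It then remains to verify the exchange axiom~(b) for $\B_{\max}$: given $B,C\in\B_{\max}$ and $x\in B\setminus C$, produce $y\in C\setminus B$ with $(B\setminus\{x\})\cup\{y\}\in\mathcal F$ — such a set has size $a$, hence automatically lies in $\B_{\max}$. I would induct on $\#(B\symdiff C)$, which by the previous step is a positive even integer. The base case $\#(B\symdiff C)=2$ is trivial, since then $B\setminus C=\{x\}$ and $(B\setminus\{x\})\cup\{y\}=C$ for the unique $y\in C\setminus B$. For the inductive step, apply the symmetric exchange property to $B,C,x$ to get $z\in B\symdiff C$ with $B\symdiff\{x,z\}\in\mathcal F$; if $z\in C\setminus B$ we are done. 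If instead $z\in B\setminus C$ (so that $B\symdiff\{x,z\}$ is merely a proper subset of $B$ lying in $\mathcal F$), the trick is to leave $B$ untouched and apply the symmetric exchange property again so as to push $C$ towards $B$: using the size bound of the previous step to discard the outcomes that would produce an element of $\mathcal F$ larger than $a$, one obtains $C'\in\B_{\max}$ with $x\notin C'$, $C'\setminus B\subseteq C\setminus B$ and $\#(B\symdiff C')=\#(B\symdiff C)-2$. The induction hypothesis applied to $(B,C',x)$ then yields $y\in C'\setminus B\subseteq C\setminus B$ with $(B\setminus\{x\})\cup\{y\}\in\mathcal F$, as required.

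I expect only this last step to be non-routine — specifically, the realization that the repair in the problematic case is to modify $C$ rather than $B$, so that the target set $(B\setminus\{x\})\cup\{y\}$ keeps its meaning; this is in essence Bouchet's argument \cite[Page~64]{bou2}, whereas the size lemma and the twist reduction are bookkeeping.
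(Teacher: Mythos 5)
Your proof is correct. The paper itself gives no explicit proof of this proposition, deferring to Bouchet \cite[Page 64]{bou2} via Proposition~\ref{matroidbybases}, and your argument (twist reduction from the lower to the upper matroid, equicardinality of maximal members of $\mathcal F$ proved by minimizing $\#(A\symdiff B)$ over oversized $B$, and induction on $\#(B\symdiff C)$ for the exchange axiom) is essentially the Bouchet argument you yourself cite, so it matches the approach the paper intends.
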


Whereas Part (a) of the last proposition will be crucially used in Example \ref{StableEx} below, we will need Part (b) only in the following proposition which we only include as an
additional motivation of the notion of delta-matroid. The result is from Bouchet \cite[Corollaries 7.3 and 7.4]{bou1} but we give a short self-contained proof.

\begin{prop}[Bouchet]
Let $S$ be a set and $\mathcal F$ be a set of subsets of $S$.
\begin{enumerate}[(a)]
\item $\mathcal F$ is the set of bases of a matroid if and only if $(S,\mathcal F)$ is a delta-matroid and all elements of $\mathcal F$ have the same cardinality.
\item $\mathcal F$ is the set of independent sets of a matroid if and only if $(S,\mathcal F)$ is a delta-matroid and $\mathcal F$ is closed under taking subsets.
\end{enumerate}
\end{prop}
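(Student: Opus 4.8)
The plan is to verify each biconditional by checking the matroid axioms in whichever form fits the data: Proposition~\ref{matroidbybases} (basis axioms) for part~(a) and Definition~\ref{dfmatroid} (independent-set axioms) for part~(b). In both parts the ``only if'' direction is easy on the cardinality/closure side (all bases of a matroid have equal size, as recorded right after Definition~\ref{dfmatroid}; the independent sets are closed under subsets by Definition~\ref{dfmatroid}(b)), so the real content is the delta-matroid axiom in one direction and the basis resp.\ augmentation axiom in the other. Throughout I would use the one standard fact from \cite[Sections~1.1--1.4]{oxl} that a maximal (with respect to inclusion) independent subset of any $T\subseteq S$ has size $r(T)$.

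For the ``if'' directions: in~(a), given a delta-matroid $(S,\mathcal F)$ with all members equicardinal, non-emptiness is part of Definition~\ref{dfdelta}, and for $B,C\in\mathcal F$ and $x\in B\setminus C\subseteq B\symdiff C$ the symmetric exchange property yields $y\in B\symdiff C$ with $B\symdiff\{x,y\}\in\mathcal F$; equicardinality forces exactly one of $x,y$ into $B$, hence $y\in C\setminus B$ and $B\symdiff\{x,y\}=(B\setminus\{x\})\cup\{y\}$, which is Proposition~\ref{matroidbybases}(b). In~(b), given a delta-matroid $(S,\mathcal F)$ closed under subsets, $\emptyset\in\mathcal F$ at once, axiom~(b) of Definition~\ref{dfmatroid} is the hypothesis, and for the augmentation axiom~(c) with $I,J\in\mathcal F$ and $\#I<\#J$, I would choose $L\in\mathcal F$ with $L\subseteq I\cup J$ and $\#L\geq\#I+1$ (possible since $L=J$ works) minimizing $\#(I\setminus L)$. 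If $I\not\subseteq L$, pick $z\in I\setminus L$ and apply the symmetric exchange property to $L,I$ and $z\in L\symdiff I$: whatever $y\in L\symdiff I$ it returns, $L\symdiff\{z,y\}$ lies in $\mathcal F$, is still inside $I\cup J$, still has size $\geq\#I+1$, and misses strictly fewer elements of $I$ than $L$ does, contradicting minimality. Hence $I\subsetneq L$, and any $x\in L\setminus I$ lies in $J\setminus I$ with $I\cup\{x\}\subseteq L\in\mathcal F$, so $I\cup\{x\}\in\mathcal F$ by down-closure.

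For the ``only if'' (delta-matroid) directions, let $A,B\in\mathcal F$ and $x\in A\symdiff B$. When $x\in A\setminus B$ both parts are immediate: in~(b) take $y:=x$, so $A\symdiff\{x,y\}=A\setminus\{x\}\subseteq A$ lies in $\mathcal F$; in~(a) apply Proposition~\ref{matroidbybases}(b) to the bases $A,B$ to get $y\in B\setminus A$ with $A\symdiff\{x,y\}=(A\setminus\{x\})\cup\{y\}\in\mathcal F$. The case $x\in B\setminus A$ is the main obstacle, since now $y$ must be found inside $A\setminus B$ with $A\symdiff\{x,y\}=(A\setminus\{y\})\cup\{x\}\in\mathcal F$ --- a genuine exchange \emph{into} $A$. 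I would handle it uniformly: if $A\cup\{x\}\in\mathcal F$ (impossible in~(a), where $A$ is already a basis) take $y:=x$; otherwise $A\cup\{x\}$ is dependent, so $r(A\cup\{x\})=\#A$ (it is $\geq r(A)=\#A$ and $<\#(A\cup\{x\})$), and since $x$ is not a loop (as $\{x\}\subseteq B\in\mathcal F$), the independent set $(A\cap B)\cup\{x\}\subseteq B$ extends to a maximal independent subset $J$ of $A\cup\{x\}$; then $\#J=\#A=\#(A\cup\{x\})-1$ forces $J=(A\setminus\{y\})\cup\{x\}$ for a unique $y\in A$, and $J\supseteq(A\cap B)\cup\{x\}$ forces $y\notin B$, so $y\in A\setminus B\subseteq A\symdiff B$ and $A\symdiff\{x,y\}=J\in\mathcal F$. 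The only non-formal steps are the extremal choice of $L$ in the augmentation argument for~(b) and this exchange into $A$, and both become routine once the maximal-independent-subset fact is invoked.
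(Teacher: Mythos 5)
Your proof is correct, but it takes a genuinely different route from the paper's in both halves. For the ``if'' directions, the paper simply invokes Proposition~\ref{lowerupper} (the lower/upper matroid of a delta-matroid), whereas you verify the basis-exchange axiom of Proposition~\ref{matroidbybases} and the augmentation axiom of Definition~\ref{dfmatroid} directly from the symmetric exchange property; your augmentation argument via the extremal choice of $L$ minimizing $\#(I\setminus L)$ is a nice self-contained alternative that avoids citing Bouchet's lower/upper matroid result. For the ``only if'' direction, the crux is the exchange \emph{into} $A$ when $x\in B\setminus A$, and here the routes really diverge: the paper restricts to $M|(I\cup\{x\})$, reduces WLOG to $I$ being a basis, upgrades $J$ to a basis, and then invokes the already-proved part~(a), while you argue rank-theoretically by extending $(A\cap B)\cup\{x\}$ to a maximal independent subset of the dependent set $A\cup\{x\}$ and reading off the unique element $y\in A\setminus B$ that gets dropped. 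Your uniform treatment handles (a) and (b) simultaneously without the reduction of (b) to (a), and in particular it makes fully explicit the ``dual'' exchange for (a) (the case $x\in B\setminus A$), which the paper subsumes under a terse citation of Proposition~\ref{matroidbybases}; the price is that you rely on the rank function and the maximal-independent-subset fact rather than on the paper's shorter restriction trick.
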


\begin{proof}
The ``if'' part of (a) follows from either part of Proposition \ref{lowerupper} and the ``only if'' part from Proposition \ref{matroidbybases}.
The ``if'' part of (b) follows from Proposition \ref{lowerupper}(b). For the remaining part of (b) it suffices to show that every matroid in the sense of Definition \ref{dfmatroid}
is a delta-matroid. Hence let $M=(S,\I)$ be a matroid. Let $I,J\in\I$. We have to show that for each $x\in I\symdiff J$ there is some $y\in I\symdiff J$ such that $I\symdiff\{x,y\}\in\I$.
To this end, fix $x\in I\symdiff J$. By passing over from $M$ to its restriction $M|(I\cup\{x\})$ defined in Definition \ref{dfmatroidamalgam}(a)
(and exchanging $J$ by its intersection with $I\cup\{x\}$), we can suppose WLOG that
$I\cup\{x\}=S$. WLOG suppose $S\notin\I$. Then $I$ is a basis of $M$. Moreover the case where $x\in I\setminus J$ is trivial since it suffices then to take $y:=x$.
We finally treat the case where $x\in J\setminus I$. In this case we promise to find $y\in I\setminus J$ such that $I\symdiff\{x,y\}\in\I$. But then we can exchange $J$ by a basis of $M$ in which it is contained. Hence not only $I$ but also $J$ is WLOG a basis of $M$. But by the already proven Part (a) the set of bases of $M$ forms a delta-matroid on $S$.
So we know that there is $y\in I\symdiff J$ such that $I\symdiff\{x,y\}$ is a basis of $M$. It remains to show $y\in I\setminus J$.
But if we had $y\in J\setminus I$, then
the basis $I\cup\{x,y\}=I\symdiff\{x,y\}$ of $M$ would strictly contain the basis $I$ of $M$ which is impossible.
\end{proof}

Finally, we formulate the following result of Brändén which will be crucial to us \cite[Corollary 3.3]{bra1}.

\begin{thm}[Brändén]\label{isdelta}
The support of a multi-affine stable polynomial is a delta-matroid.
\end{thm}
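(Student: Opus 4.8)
The support of a multi-affine stable polynomial is a delta-matroid.

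\medskip

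The plan is to reduce the claim to a statement about homogeneous stable polynomials, where one can invoke the already-quoted fact that the support of a multi-affine homogeneous stable polynomial is the set of bases of a matroid (Theorem \ref{supportismatroid}), and matroid bases automatically satisfy the symmetric exchange property via Proposition \ref{matroidbybases}. To carry out this reduction, let $p\in\R[x]$ be multi-affine and stable, and let $\mathcal F:=\supp(p)$. Introduce one new variable $x_{\ell+1}$ and form the homogenization $P:=x_{\ell+1}^{e}\,p(x_1/x_{\ell+1},\ldots)$ taken with respect to the \emph{degree} that equalizes the total degrees of all monomials; concretely, if $r$ denotes the maximum size of a set in $\mathcal F$, set
\[
P:=\sum_{A\in\mathcal F}a_A\Bigl(\prod_{x_i\in A}x_i\Bigr)x_{\ell+1}^{\,r-\#A}\in\R[x_1,\ldots,x_{\ell+1}].
\]
Then $P$ is homogeneous and multi-affine, $p=P|_{x_{\ell+1}=1}$, and $\supp(P)=\{A\cup\{x_{\ell+1}\}^{\,(r-\#A\ \text{times})}\}$ — but since $P$ is multi-affine only the sets with $\#A\in\{r-1,r\}$ survive with $x_{\ell+1}$, so this homogenization is \emph{not} quite multi-affine unless $\mathcal F$ already has uniform size. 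The honest fix is the standard one: homogenize while keeping multi-affineness by using that $P$ above is multi-affine precisely when $r-\#A\le 1$ for all $A$, which fails in general. So instead one should argue directly, and the key step is to show that stability is preserved under homogenization in the graded sense and that multi-affineness is preserved — which it is not — so the reduction must be done more carefully.

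A cleaner route, and the one I would actually pursue: prove the symmetric exchange property directly from stability. Fix $A,B\in\mathcal F=\supp(p)$ and $x_k\in A\symdiff B$; we must produce $x_j\in A\symdiff B$ with $A\symdiff\{x_k,x_j\}\in\mathcal F$. By relabeling, assume $x_k\in A\setminus B$ (the case $x_k\in B\setminus A$ is symmetric after swapping $A$ and $B$, since $\symdiff$ is symmetric). Restrict attention to the variables in $A\symdiff B$ by setting $x_i=1$ for $x_i\in A\cap B$ and $x_i=0$ for $x_i\notin A\cup B$; the resulting polynomial $\tilde p$ is still stable (specializing a real stable polynomial at a nonnegative real value of one variable preserves stability, and setting a variable to $0$ preserves it by Theorem \ref{WagnerWeiCharakterization} or directly), it is multi-affine in the remaining variables $\{x_i : x_i\in A\symdiff B\}$, and its support is $\{\,(C\cap(A\symdiff B)) : C\in\mathcal F,\ C\cap(A\cap B)\text{-pattern and }C\setminus(A\cup B)=\emptyset\ \text{compatible}\,\}$ — in particular it contains the set $A\setminus B$ (from $C=A$) and the set $B\setminus A$ (from $C=B$), and these two sets are disjoint. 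The claim is now reduced to: a multi-affine stable polynomial whose support contains two disjoint sets $A'$ and $B'$ with $x_k\in A'$ must, for some $x_j\in A'\cup B'$, have $A'\symdiff\{x_k,x_j\}$ in its support.

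The main obstacle — and the heart of Brändén's argument — is this last reduced statement, which I would prove by a careful two-variable specialization. Keep only $x_k$ and run over each candidate $x_j\in B'$: set all other $A'\symdiff B'$-variables to a common value and examine the bivariate stable polynomial in $(x_k,x_j)$ obtained by a generic specialization. The Grace–Walsh–Szegő / polarization machinery together with the characterization of bivariate multi-affine stable polynomials (essentially: $a+bx_k+cx_j+dx_kx_j$ is stable with nonnegative coefficients iff $ad\le bc$, the Rayleigh-type inequality) forces that one cannot have both the $A'$-monomial and the $B'$-monomial present while \emph{all} the ``one-swap'' monomials $A'\symdiff\{x_k,x_j\}$ are absent — otherwise the Rayleigh difference $(\partial_{x_k}p)(\partial_{x_j}p)-p\,\partial_{x_k}\partial_{x_j}p$ from Theorem \ref{WagnerWeiCharakterization} would be forced negative somewhere. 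Summing/combining over $x_j$ and using that the coefficients $a_A$ are all of the same sign (which one first arranges, since for a stable multi-affine polynomial the support-coefficients can be taken nonnegative after a sign change $x_i\mapsto -x_i$) yields the desired $x_j$. I expect the bookkeeping in passing between $\mathcal F$, the restriction to $A\symdiff B$, and the bivariate slice to be the fiddly part; the conceptual input is entirely Theorems \ref{WagnerWeiCharakterization}, \ref{mapthm} and \ref{supportismatroid} together with polarization.
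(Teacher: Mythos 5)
The paper does not prove this theorem; it is quoted directly from Brändén \cite[Corollary 3.3]{bra1}, so there is no in-paper argument to compare against and your proposal has to stand on its own. Its outline (eliminate the variables in $A\cap B$ and outside $A\cup B$ to reduce to disjoint sets $A'$, $B'$, then argue via a Rayleigh inequality) is the right genre, but the reduction as written is broken. After substituting $x_i:=1$ for $x_i\in A\cap B$, the coefficient of the monomial $A'\symdiff\{x_k,x_j\}$ in $\tilde p$ is a sum over \emph{all} $C\in\supp(p)$ with $C\subseteq A\cup B$ and $C\cap(A\symdiff B)=A'\symdiff\{x_k,x_j\}$, so its nonvanishing tells you nothing about the one specific set $C=A\symdiff\{x_k,x_j\}$ (which must contain all of $A\cap B$) that the delta-matroid axiom actually demands. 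The correct operation on the $A\cap B$-variables is contraction, i.e.\ $\partial_{x_i}$, which forces $A\cap B\subseteq C$; together with deletion $x_i:=0$ outside $A\cup B$, both of which preserve stability and cannot create cancellations in the support, this does give a sound reduction.

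The second gap is in the final step, and it is not mere bookkeeping. Theorem \ref{WagnerWeiCharakterization} only asserts that \emph{some} pair $(i,j)$ has a nonnegative Rayleigh polynomial; you cannot derive a contradiction from negativity of the Rayleigh difference for the particular pair $(x_k,x_j)$ you have chosen. What you actually need is Brändén's strong Rayleigh characterization (a multi-affine $p$ is stable iff the Rayleigh polynomial is nonnegative on all of $\R^\ell$ for \emph{every} pair), which is a separate and substantially deeper result in \cite{bra1}. In addition, the auxiliary claim that the coefficients of a multi-affine stable polynomial can be made nonnegative by substitutions $x_i\mapsto\pm x_i$ is false in the non-homogeneous case: $1+x_1+x_2-x_1x_2$ is stable, but every choice of signs leaves exactly three coefficients of one sign and one of the other. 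So the circle of ideas is the right one, but the proposal currently contains two concrete errors (the $x_i=1$ substitution where $\partial_{x_i}$ is required, and the appeal to Wagner--Wei where the all-pairs Rayleigh criterion is required) and leans on a false normalization claim.
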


Finally, we continue our Examples \ref{MatroidCounterEx} and \ref{HomStableCounterEx} and show that the stable analogue of Problem \ref{rzap} is in general not solvable.

\begin{exm}\label{StableEx}
Let again $M_1$ and $M_2$ be the matroids from Example \ref{MatroidCounterEx}. We show that there is no stable amalgam of $p:=p_{M_1}$ and $q:=p_{M_2}$.

Assume to the contrary that $r$ is such an amalgam. We seek for a contradiction. Exactly as in Example \ref{HomStableCounterEx}, we can suppose WLOG that $r$ is multi-affine
and therefore is of the form \[r=p+z\partial_zq+yzf=q+y\partial_yp+yzf\] for some multi-affine $f\in\R[x]$.
Thus, all monomials we add to the supports of $p$ and $q$ in order to amalgamate are multiples of $yz$ and thus of degree at least $3$ except for possibly
the monomial $yz$. By Theorem \ref{isdelta}, $\supp(r)$ is a delta-matroid on $S:=\{x_1,\ldots,x_6,y,z\}$.
Now we distinguish two cases and show that none of them can occur.

\smallskip
\textbf{Case 1:} $yz$ is a monomial of $r$.

\smallskip
Consider the monomial $x_1x_4x_5$, which appears in $p$ and thus in $r$. The monomials $x_1x_4x_5$ and $yz$ violate the symmetric exchange property. Indeed, after deleting $x_5$ from $\{x_1,x_4,x_5\}$, we can only remove $x_1$ or $x_4$ or add $y$ or $z$ or do nothing. But the monomials $x_4,\,x_1,\,yx_1x_4,\,zx_1x_4$ and $x_1x_4$ do not appear in $p$ or $q$ and thus do not appear in $r$. Hence, $yz$ cannot be a monomial of a stable multi-affine amalgam $r$.

\smallskip
\textbf{Case 2:} $yz$ is no monomial of $r$.

\smallskip
Then one deduces easily from Proposition \ref{lowerupper}(a) that the elements of the delta-matroid $\supp(r)$ of cardinality $3$ form the set of bases of a matroid $N$ on $S$.
Since $r$ is an amalgam of the cubic homogeneous polynomials
$p$ and $q$, so is its cubic homogeneous part $r_3$. Now $\supp(r_3)$
is the set of bases of $N$. By Remark \ref{actually} this is impossible.

\smallskip
In both cases, we attained the desired contradiction. Hence $p$ and $q$ have no stable amalgam.
\end{exm}

\begin{rem}
The lowest homogeneous part of a nonzero stable polynomial $p\in\R[x]$ is again stable (see \cite[Proposition 4.1]{rvy} and
\cite[Proposition 2.6]{ks}, the analogous fact for the highest homogeneous part was discovered already in \cite[Proposition 2.2]{cosw}). This can be seen by using Hurwitz's Theorem \cite[Theorem 1.3.8]{rs} and the fact that the lowest homogeneous part of $p$ is equal to
\[\lim_{\ep\to0}\ep^{-d}p(\ep x_1,\ldots,\ep x_\ell)\]
where $d$ is its degree.
For this reason, the polynomial $r_3$ in Case 2 of Example \ref{StableEx} is actually stable. Instead of appealing to Remark \ref{actually}, one could thus reduce in Case 2
directly to Example \ref{MatroidCounterEx}. We preferred our
argument via Remark \ref{actually} to keep the proof more self-contained and because we do not see how to avoid the use of delta-matroids in Case 1.

Indeed, for stable polynomials with nonnegative coefficients all of its homogeneous components are again stable \cite[Lemma 4.16]{bbl}. 
But this fails in general, e.g., the polynomial $(x-1)(y+1)$ is stable but its degree 1 component $x-y$ is not stable. Thus we cannot circumvent Case 1 by just taking the homogeneous component of degree 3 and neglecting the monomial $yz$.
\end{rem}

%-----------------Counterexample for real zero polynomials-----------------

\section{Counterexample for real zero polynomials}\label{realzerocounter}

In the second version of the preprint \cite{sc1} released in March 2020, the second author conjectured that the Real Zero Amalgamation Problem \ref{rzap} would always admit a solution.
Using Example \ref{StableEx} and the theory of stable and real zero polynomials from Section \ref{stableRZHyp}, we are now ready to give the first counterexample.
For linear polynomials, it is trivially solvable. For quadratic polynomials we have shown in
Theorem \ref{amalgamate}(c) that it is still possible. Hence the smallest degree where it can fail is $3$. Indeed, we will provide cubic real zero polynomials that cannot
be amalgamated by an (arbitrary degree) real zero polynomial. By Remark \ref{amalgam0}, they will of course need to have $\ell\ge1$ shared variables (for $\ell=0$ there
would be even a degree-preserving real zero amalgam by Theorem \ref{amalgamate}(a)). In order to minimize the degrees of freedom for the amalgam, it is perhaps
not astonishing that the two blocks of non-shared variables will consist of just $m=n=1$ variable each. If we make this choice then we know by Theorem \ref{amalgamate}(b)
that we need to choose $\ell$ to be at least $2$. In Section \ref{hope} below, we will however argue that our particular technique based on matroids will not work for $\ell=2$.
Our example will have $\ell=6$ shared variables. We would suspect that counterexamples to real amalgamation with smaller $\ell$ can be found.

\begin{exm}\label{RZEx}
We consider the two cubic homogeneous polynomials $p:=p_{M_1}$ and $q:=p_{M_2}$ from the previous examples. In Example \ref{HomStableCounterEx}, we
recorded that $p$ and $q$ are stable.
Set $a:=(\mathbf 1,0)=(1,1,1,1,1,1,0)\in\R_{\ge0}^7$. Due to $p(a)=q(a)=18\neq0$, Proposition \ref{shift} yields
that $p(x+\1,y)$ and $q(x+\1,z)$ are real zero polynomials. Of course, we have $p(x+\1,0)=q(x+\1,0)$ since $p(x,0)=q(x,0)$.

Now suppose that there is a real zero polynomial $r\in\R[x,y,z]$ such that $p(x+\1,y)=r(x,y,0)$ and $q(x+\1,z)=r(x,0,z)$. Since the coefficients of $p(x+\1,y)$ and $q(x+\1,z)$ are nonnegative, we get that \[\R_{\geq0}^7\subseteq C(p(x+\1,y))\qquad\text{and}\qquad \R_{\geq0}^7\subseteq C(q(x+\1,z)).\] Thus, by the convexity of the rigidly convex set
$C(r)$ and the fact that $r$ is an amalgam of $p$ and $q$, we get that the convex hull of $(\R_{\geq0}^7\times\{0\})\cup(\R_{\geq0}^6\times\{0\}\times\R_{\geq0})$ is contained in
$C(r)$. But this convex hull is the orthant $\R_{\geq0}^8$ Thus, by Proposition \ref{rzstable} $r$ is stable. But now $r(x-\1,y,z)$ is a stable amalgam of $p$ and $q$ by Remark \ref{stableobvious}(d). This is a contradiction to Example \ref{StableEx}.
\end{exm}

%-------------Real zero amalgamation conjectures--------

\section{Real zero amalgamation conjectures}\label{hope}

Poljak and Turzík \cite{pt1} introduced the following notion for matroids.

\begin{df}\label{dfstickymatroid}
A matroid $M$ is called \emph{sticky} if whenever it is the common restriction of two matroids $M_1$ and $M_2$, then $M_1$ and $M_2$ can be amalgamated.
\end{df}

Inspired by this, we make the following definition.

\begin{df}
We call a real zero polynomial $f\in\R[x]$ \emph{sticky} if the Real Zero Amalgamation Problem \ref{rzap} admits a solution whenever $p(x,0)=q(x,0)=f$.
\end{df}

In this section, we will report what is known about sticky matroids and we will speculate about which real zero polynomials might be sticky.
We need some further standard notions for matroids first which are based on Definition \ref{dfclosure} above \cite[Pages 28 and 228]{oxl}.

\begin{df} Let $M=(S,\I)$ be a matroid.
\begin{enumerate}[(a)]
\item A set $F\subseteq S$ is called a \emph{flat} (or \emph{closed}) if $\cl(F)=F$.
\item $M$ is called \emph{modular} if $r(F\cap G)+r(F\cup G)=r(F)+r(G)$ for all flats $F,G\subseteq S$
\end{enumerate}
\end{df}

Note that the condition in (b) says that the submodularity inequality from Proposition \ref{rankproperties}(c) becomes sharp on flats.
Poljak and Turzík showed already that every modular matroid is sticky, that every sticky matroid of rank at most $3$ is modular and conjectured that
in fact every sticky matroid is modular \cite{pt1}. This became known as the \emph{sticky matroid conjecture} and was open almost for 40 years until
it got recently solved by Shin \cite{shi}.

\begin{thm}[Shin]
The sticky matroids are exactly the modular ones.
\end{thm}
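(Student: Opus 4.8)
Since Poljak and Turzík have already shown that every modular matroid is sticky \cite{pt1}, the plan is to prove only the converse: a matroid that is not modular is not sticky. I would argue by contraposition. If $M=(S,\I)$ is not modular, choose flats $F,G$ with $r(F\cap G)+r(F\cup G)<r(F)+r(G)$. The first move is to localize and shrink this witness: replacing $M$ by the minor $(M|(F\cup G))/(F\cap G)$ one may assume $F\cap G=\emptyset$ and that $F\cup G$ spans $M$, and iterating single-element contractions and deletions inside the two flats one reduces to a matroid of small rank carrying disjoint flats whose union is spanning. Combined with the rank-$\le 3$ case of the conjecture, which is exactly the Poljak--Turzík theorem quoted above, this leaves a concrete low-rank situation --- the prototype being a rank-$3$ matroid with three pairwise disjoint rank-$2$ flats $L_1,L_2,L_3$ whose union is the ground set, as in the restriction underlying Example~\ref{MatroidCounterEx} (there $L_1,L_2,L_3$ are the pairs $\{x_1,x_4\}$, $\{x_2,x_5\}$, $\{x_3,x_6\}$).

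In that situation the non-amalgamable pair is built by hand, exactly as in Example~\ref{MatroidCounterEx}. Using Crapo's modular-cut description of single-element extensions I would form two extensions $M_1=M+e$ and $M_2=M+f$, placing the new point $e$ of $M_1$ simultaneously on $L_1$, $L_2$ and $L_3$ (legitimate because these flats are pairwise disjoint, hence pairwise non-modular, so the generated modular cut adds no spurious point), and placing the new point $f$ of $M_2$ on $L_1$ and $L_2$ but \emph{off} $L_3$. In any amalgam $N$ of $M_1$ and $M_2$, both $e$ and $f$ lie in $\cl_N(L_1)\cap\cl_N(L_2)$, and since $r_N(\cl_N L_i)=r(L_i)=2$ while $r_N(L_1\cup L_2)=r(L_1\cup L_2)=3$, submodularity of $r_N$ forces
\[
r_N(\{e,f\})\le r(L_1)+r(L_2)-r_N(L_1\cup L_2)=1,
\]
so $e$ and $f$ are parallel in $N$. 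But then $r_N(L_3\cup\{f\})\le r_N(L_3\cup\{e,f\})=r_N(L_3\cup\{e\})=r_{M_1}(L_3\cup\{e\})=2$, whereas $N$ restricts to $M_2$ on $\{x_1,\dots,x_6,f\}$ and there $r_{M_2}(L_3\cup\{f\})=3$ because $f\notin\cl_{M_2}(L_3)$. This contradiction shows that $M_1$ and $M_2$ have no amalgam, so $M$ is not sticky.

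The routine parts of this plan are checking, via the modular-cut criterion, that $M_1$ and $M_2$ are genuine matroids restricting to $M$, and the rank arithmetic inside the hypothetical amalgam. The real difficulty --- and the substance of Shin's proof \cite{shi} --- is twofold. First, one must show that \emph{every} non-modular matroid in fact hosts an explicit obstructing configuration of the above kind, together with extensions realisable as matroids; this is where a genuine structural classification of minimal non-modular patterns, and the earlier reductions of the sticky matroid conjecture to bounded rank, come in. Second, stickiness is not visibly preserved under taking minors, so one cannot simply pass to a small minor and invoke the result there; instead one must extend the original matroid $M$ itself by finitely many elements so that, after the appropriate deletions and contractions, the non-amalgamable pair reappears. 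Carrying out this lifting in full generality is the heart of the argument; granted it, the reduction above together with the Poljak--Turzík rank-$3$ theorem yields the statement.
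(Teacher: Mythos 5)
The paper does not prove this theorem; it states it with a citation to Shin's preprint \cite{shi} and uses the result as a black box, so there is no internal proof to compare your proposal against. Assessed on its own terms: you correctly record the easy inclusion (modular implies sticky, due to Poljak and Turzík \cite{pt1}), and your prototype for the converse is sound --- given three pairwise disjoint rank-$2$ flats $L_1,L_2,L_3$ spanning a rank-$3$ matroid, your two single-element extensions $M_1=M+e$ and $M_2=M+f$ do obstruct amalgamation, and the rank arithmetic inside a hypothetical amalgam $N$ is a faithful abstraction of the argument of Example~\ref{MatroidCounterEx}. However, you have explicitly left open the two steps that constitute the actual content of Shin's theorem: (i) showing that \emph{every} non-modular matroid can be brought into such a position, which requires a structural classification of minimal non-modular configurations and a verification that the modular-cut extensions you invoke are always realizable (the phrase ``the generated modular cut adds no spurious point'' is asserted, not proved; the modular cut generated by several flats is in general larger than the filter they generate, and closing up under modular pairs can collapse exactly the geometry you need); and (ii) repairing the minor reduction, since, as you yourself observe, stickiness is not known to be inherited by minors, so passing to $(M|(F\cup G))/(F\cap G)$ and invoking the rank-$\le 3$ result of Poljak and Turzík says nothing directly about $M$. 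These are not gaps to be filled by routine checking; they are the substance of \cite{shi}. What you have written is a correct orientation to the literature and a sound rendering of the Poljak--Turzík obstruction, but it is a map of the territory rather than a proof.
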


The proof of Poljak and Turzík that every modular matroid is sticky is not very hard and it is a very easy exercise for the reader
that matroids on two-element sets are modular. So we record the following corollary.

\begin{cor}[Poljak and Turzík]
Matroids with a two-element ground set are sticky.
\end{cor}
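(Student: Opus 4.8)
The plan is to derive the corollary from two ingredients that are already available: the Poljak and Turzík result (mentioned above, and in any case subsumed by Shin's theorem) that every modular matroid is sticky, and the claim that every matroid on a two-element ground set is modular. Granting both, the statement follows at once, so the only work is to verify that two-element matroids are modular.

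To this end I would take a matroid $M=(S,\I)$ with $S=\{a,b\}$ and check the defining identity $r(F\cap G)+r(F\cup G)=r(F)+r(G)$ for all flats $F,G\subseteq S$. The observation that makes this painless is that the identity is automatic whenever $F$ and $G$ are comparable: if $F\subseteq G$ then $F\cap G=F$ and $F\cup G=G$, so both sides equal $r(F)+r(G)$. Since the only incomparable pair of subsets of $\{a,b\}$ is $\{a\}$ and $\{b\}$, it suffices to treat $F=\{a\}$ and $G=\{b\}$ under the hypothesis that both these singletons are flats.

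The key point is then that both $\{a\}$ and $\{b\}$ can be flats only for the free matroid on $S$, i.e.\ when $\{a,b\}\in\I$. Indeed, if $a$ were a loop we would have $r(\{a,b\})=r(\{b\})$, so $a\in\cl(\{b\})$ and $\{b\}$ would not be a flat; hence neither element is a loop and $r(\{a\})=r(\{b\})=1$. If in addition $r(\{a,b\})=1$, then $r(\{b\}\cup\{a\})=1=r(\{b\})$, so again $a\in\cl(\{b\})$, contradicting that $\{b\}$ is a flat; thus $r(\{a,b\})=2$. For this matroid $r(\emptyset)+r(\{a,b\})=0+2=2=1+1=r(\{a\})+r(\{b\})$, which is exactly the required equality. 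So $M$ is modular, and therefore sticky by the cited result.

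There is no genuine obstacle here; the only thing demanding a little care is keeping straight which singletons are flats among the four matroids on $\{a,b\}$ (two loops; one loop and one non-loop; two parallel non-loops; the free matroid) and noticing that the modular equation is vacuous in the first three cases. One could equally well just enumerate these four matroids and check modularity directly --- the reduction to incomparable flats merely trims the case analysis to a single line.
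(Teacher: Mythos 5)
Your proposal is correct and follows exactly the route the paper intends: the paper derives the corollary from Poljak and Turzík's result that modular matroids are sticky, leaving the modularity of two-element matroids as ``a very easy exercise,'' which your case analysis (reduction to the incomparable pair $\{a\},\{b\}$ and the observation that both are flats only in the free matroid) carries out correctly.
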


A matroid-based example of the kind we have constructed in Section \ref{realzerocounter} above does therefore not exist for $\ell=2$ shared variables.
In addition, real zero polynomials in two variables are very special since they enjoy for example having the determinantal representation guaranteed by the
Helton-Vinnikov theorem \ref{vinnikov}. This gives hope that real zero polynomials in two variables could be sticky. Since this would have positive
consequences for the famous Generalized Lax Conjecture \cite[Subsection 6.1]{hv} mentioned at the end of Section \ref{stableRZHyp} \cite{sc2}, we
think that the following conjecture is well-motivated.

\begin{con}[Weak real zero amalgamation conjecture]
Real zero polynomials in two variables are sticky.
\end{con}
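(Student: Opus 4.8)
The plan is to lean on the one feature that makes two variables special, namely the Helton--Vinnikov theorem~\ref{vinnikov}. Normalising $f:=p(x,0)=q(x,0)$ so that $f(0)=1$ and setting $d:=\deg f$, Theorem~\ref{vinnikov} supplies symmetric matrices $A_1,A_2\in\R^{d\times d}$ with $f=\det(I_d+x_1A_1+x_2A_2)$. The optimistic scenario is that this representation can be \emph{extended}: that there are symmetric matrices of a common size $D\ge d$ yielding
\[p=\det\Bigl(I_D+x_1A_1'+x_2A_2'+{\textstyle\sum_j y_jB_j}\Bigr),\qquad q=\det\Bigl(I_D+x_1A_1''+x_2A_2''+{\textstyle\sum_k z_kC_k}\Bigr)\]
with $A_i'=A_i''$ for $i=1,2$. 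Granting this, Remark~\ref{detamalgam} hands us the real zero amalgam $r:=\det(I_D+x_1A_1'+x_2A_2'+\sum_j y_jB_j+\sum_k z_kC_k)$, which by Proposition~\ref{detexample} would moreover be degree-preserving. So the conjecture, even in a degree-preserving form, would follow from the statement that every real zero polynomial whose restriction to $x_1,x_2$ is $f$ admits a determinantal representation in which a prescribed Helton--Vinnikov representation of $f$ fills the shared block.

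The main obstacle is precisely this last statement, which is where the two-variable world stops being tractable: Helton--Vinnikov already fails for three variables \cite{bra2}, so neither $p$ nor $q$ need possess \emph{any} determinantal representation, and even when they do it is unclear how to force the two $x$-blocks onto a single common pair. I therefore expect a direct determinantal attack to require a genuinely new idea; a reasonable first target is the case $m=n=1$ with $\deg p,\deg q\le d+1$, where the hyperbolic-curve rigidity behind Helton--Vinnikov lies closest to the surface. I would also warn against hoping for an inductive reduction in $m$ and $n$: peeling off a single variable replaces the shared part by a polynomial in more than two variables, about which the conjecture says nothing.

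If the determinantal route is blocked, the natural fallback is to imitate the twisted-averaging construction of Remark~\ref{detamalgamtwist} and Theorem~\ref{integralbecomesderivative}. In the spineless case $\ell=0$ that construction ultimately produces the \emph{explicit} amalgam $\frac1{d!}\sum_{i+j=d}\partial_s^i\partial_t^j\tilde p\tilde q$ of Corollary~\ref{isrealzero}, a differential expression in the degree-$d$ homogenizations that is defined for all real zero $p,q$, not merely determinantal ones. For $\ell=2$ one would look for an analogous closed form---plausibly built from the homogenizations of $p$ and $q$ together with the Helton--Vinnikov data of $f$, or equivalently from an $O_d$-integral taken only over the stabiliser of $(A_1,A_2)$ so that $f$ is preserved term by term---and then try to prove its real-rootedness direction by direction, as in the proof of Corollary~\ref{isrealzero} via Walsh's Corollary~\ref{walsh}. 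This strikes me as the most promising concrete thing to attempt, and, like the determinantal route, it would come with degree control.

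A third, more combinatorial avenue would at least reduce the case of nonnegative coefficients to a question about stable amalgamation. If $p$ and $q$ (and hence $f$) have nonnegative coefficients, shifting by $\1$ as in Example~\ref{RZEx} moves everything into the stable regime, and by Remark~\ref{stableobvious}(d) a stable amalgam shifts back to a stable, hence real zero, amalgam; so it would suffice to amalgamate stable polynomials sharing a \emph{two-variable} common part. The obstructions of Example~\ref{StableEx} were organised by matroids and delta-matroids, and over a two-element ground set these shadows are as rigid as two-element matroids, which are modular, hence sticky; pushing the delta-matroid bookkeeping through this two-variable case is what remains. None of these three routes is routine---which is, of course, why this is stated as a conjecture rather than a theorem.
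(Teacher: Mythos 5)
This statement is a \emph{conjecture}, not a theorem: the paper supplies no proof, and you correctly recognize this. Your write-up is therefore not a proof attempt but a survey of possible strategies, and it largely echoes the paper's own commentary. The determinantal route (your first) is exactly the ``certain hope'' voiced in Section~\ref{positive} just before Remark~\ref{detamalgamtwist}, and the paper is equally explicit that it does not know how to carry out the required degree-preserving adaptation for $\ell>0$. The twisted-averaging route (your second) is the paper's Remark~\ref{detamalgamtwist}/Theorem~\ref{integralbecomesderivative} machinery, which the authors again say they cannot push past $\ell=0$; your suggestion to average over the stabiliser of $(A_1,A_2)$ rather than all of $O_d$ is a sensible refinement not spelled out in the paper, though it is only a heuristic. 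And the paper's actual motivation for the conjecture is precisely your observation that two-element matroids are modular, hence sticky (Corollary following Shin's theorem).

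Two cautions on your third route. First, the logical force of the matroid fact is only negative: the paper says a \emph{matroid-based counterexample of the kind constructed in Section~\ref{realzerocounter}} cannot exist for $\ell=2$; it does not say that stable or real zero amalgamation over two shared variables succeeds, since the support delta-matroid lives on the full ground set $\{x_1,x_2,y_1,\ldots,y_m,z_1,\ldots,z_n\}$, not on two elements, and there could be obstructions invisible to the matroid shadow. Second, your proposed reduction ``nonnegative coefficients $\Rightarrow$ shift into the stable regime $\Rightarrow$ amalgamate $\Rightarrow$ shift back'' is not a reduction as stated. If $p,q$ have nonnegative coefficients and $p(0),q(0)>0$ then $\R_{\ge0}^\ell\subseteq C(p)\cap C(q)$ and Proposition~\ref{rzstable} already makes them stable with no shift; but a stable amalgam $r$ of stable polynomials need not be real zero (cf.\ Example~\ref{quadraticrealzero}(c)), and Remark~\ref{stableobvious}(d) only preserves stability under shifts, not real-zero-ness. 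To conclude that a stable amalgam is real zero you would need control of $C(r)$, which is exactly what made the argument in Example~\ref{RZEx} work in the \emph{negative} direction and is not available for free in the positive one. Also, real zero polynomials in two variables need not have nonnegative coefficients at all, so this would at best treat a subcase.

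Bottom line: your reading of the landscape is faithful to the paper, and you correctly refrain from claiming a proof. The one substantive correction is that the matroid stickiness result and the stable-shift tricks do not combine into even a partial reduction the way your third paragraph suggests.
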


A stronger form of this conjecture motivated by our degree preserving approaches to real amalgamation in Theorem \ref{amalgamate} above is the following.

\begin{con}[Strong real zero amalgamation conjecture]
Let $\ell=2$, i.e., $x=(x_1,x_2)$, and $d\in\N_0$. Suppose
$p\in\R[x,y]$ and $q\in\R[x,z]$ are real zero polynomials of degree at most $d$ such that $p(x,0)=q(x,0)$. Then there exists a real zero polynomial $r\in\R[x,y,z]$ of degree at most $d$ such that \[p=r(x,y,0)\qquad\text{and}\qquad q=r(x,0,z).\]
\end{con}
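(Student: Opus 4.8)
Since the statement is a conjecture, what follows is a strategy, with its point of failure flagged. The cases $d\le2$ are already settled by Theorem \ref{amalgamate}(c), which holds for every $\ell$, so the first open instance is $d=3$.

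\textbf{Setup.} Real zero polynomials do not vanish at the origin, so we rescale to $p(0,0)=q(0,0)=1$ and set $f:=p(x,0)=q(x,0)\in\R[x_1,x_2]$, a real zero polynomial of some degree $d'\le d$. By the Helton--Vinnikov theorem \ref{vinnikov} there are symmetric $A_1,A_2\in\R^{d'\times d'}$ with $f=\det(I_{d'}+x_1A_1+x_2A_2)$; padding each $A_i$ with a zero block of size $d-d'$ we may take $A_1,A_2\in\R^{d\times d}$ symmetric so that $f=\det(I_d+x_1A_1+x_2A_2)$.

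\textbf{Plan.} The idea is to imitate the determinantal glueing of Remarks \ref{detamalgam} and \ref{detamalgamtwist} using only this representation of the shared part $f$. If one could upgrade it to compatible representations
\[p=\det(I_d+x_1A_1+x_2A_2+L_y),\qquad q=\det(I_d+x_1A_1+x_2A_2+L_z)\]
with linear pencils $L_y$ in the $y_j$ and $L_z$ in the $z_k$ having symmetric coefficient matrices, then by Remark \ref{detamalgam} the polynomial $\det(I_d+x_1A_1+x_2A_2+L_y+L_z)$ is already a real zero amalgam of degree $\le d$; the $O_d$-averaged variant of Theorem \ref{integralbecomesderivative} provides a more canonical one and is what would be needed if the two representations are forced to have different sizes. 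Thus the task would split into: (i) passing from the Helton--Vinnikov representation of $f$ to simultaneous, possibly size-increasing, determinantal representations of $p$ and $q$ sharing the pencil $x_1A_1+x_2A_2$, and (ii) carrying out the averaging and the degree bookkeeping in the unequal-size case.

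\textbf{Main obstacle.} Step (i) is exactly where the strategy collapses: as soon as a single $y$- or $z$-variable is present, $p$ and $q$ live in at least three variables, where Helton--Vinnikov is false and real zero polynomials need not admit determinantal representations at all \cite{bra2}. An actual proof must therefore either produce a surrogate for determinantal representations that behaves well under adjoining one further block of variables, or argue geometrically and build the rigidly convex set $C(r)\subseteq\R^{2+m+n}$ directly from $C(p)$ and $C(q)$ --- the natural candidate being $\conv\bigl((C(p)\times\{0\}^n)\cup(\{0\}^m\times C(q))\bigr)$, whose slices at $z=0$ and $y=0$ are readily seen to be $C(p)$ and $C(q)$ because those two bodies meet precisely in the common slice $C(f)\subseteq\R^2$ --- the crux then being to prove this convex set is rigidly convex, realized by a real zero polynomial of degree $\le d$. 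One should not expect to reach this with Helton--Vinnikov alone: by \cite{sc2} a proof already yields the weak Generalized Lax Conjecture described in the introduction. I would also caution against inducting on $m$ or $n$, since deleting one $y$-variable leaves an amalgamation problem with $2+(m-1)$ shared variables, which for $m\ge1$ is no longer the two-variable case, so the very feature that makes $\ell=2$ special is lost. My expectation is that the decisive difficulty is precisely the jump from the two-dimensional shared slice $C(f)$, where the Helton--Vinnikov machinery applies, to the full $(2+m+n)$-dimensional picture, where it does not.
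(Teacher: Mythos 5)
This statement is a conjecture; the paper offers no proof, only motivation (Section \ref{hope}, via Shin's theorem that sticky matroids are exactly the modular ones and the observation that matroids on a two-element ground set are modular) and an admission that the authors do not know how to carry out the degree-preserving amalgamation beyond $\ell=0$. Your strategy coincides with the hope the authors themselves articulate between Remark \ref{amalgam0} and Remark \ref{detamalgamtwist}: apply Helton--Vinnikov to the shared part $p(x,0)=q(x,0)$ and try to imitate the determinantal glueing of Remark \ref{detamalgam}, averaging over $O_d$ if needed. You also correctly locate the obstruction: passing from a determinantal representation of the two-variable slice to compatible representations of $p$ and $q$ fails because these involve $\geq3$ variables, where determinantal representations need not exist \cite{bra2}. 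Your remark about why naive induction on $m$ or $n$ destroys the $\ell=2$ structure is accurate and useful.

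One caution on your geometric alternative: even granting that $\conv\bigl((C(p)\times\{0\})\cup(\{0\}\times C(q))\bigr)$ is rigidly convex, this alone would not settle the conjecture, because a rigidly convex set does not determine its defining real zero polynomial and one would still need a representative $r$ whose restrictions $r(x,y,0)$ and $r(x,0,z)$ equal $p$ and $q$ \emph{as polynomials}, not merely have the correct rigidly convex slices. So the geometric route is strictly harder to close than the determinantal one, not an independent escape hatch. Apart from this, your assessment of where the difficulty lies matches the paper's own.
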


\section*{Acknowledgements}
We thank Mario Bauer and Cynthia Vinzant for useful discussions at the Oberwolfach workshop \emph{Real Algebraic Geometry with a View toward Koopman Operator Methods}
in March 2023. We are very grateful to both anonymous referees for their help in improving the paper. In particular, we owe the now really simple proof of
Lemma \ref{walshreinterpret} to one of these referees.

\end{document}